\documentclass[11pt]{amsart}
\usepackage[a4paper,
            bindingoffset=0in,
            left=1in,
            right=1in,
            top=1.2in,
            bottom=1.1in,
            footskip=.25in]{geometry}

\usepackage{mathpazo}
\usepackage{euler}

\usepackage{quiver}

\usepackage{setspace}
\usepackage[dvipsnames]{xcolor}

\usepackage{tikz}
\usepackage{tikz-cd}

\usepackage{mathtools}

\usepackage{graphicx}
\usepackage{enumerate} 

\usepackage{latexsym}
\usepackage{amssymb,hyperref}
\hypersetup{colorlinks,linkcolor={MidnightBlue},citecolor={Maroon},urlcolor={PineGreen}}
\usepackage[cp850]{inputenc}
\usepackage{epsfig}
\usepackage{amsthm}
\usepackage{amscd}
\usepackage{amsmath}
\usepackage{amsfonts}
\usepackage{graphics}
\newtheorem{theorem}{Theorem}[section]

\newtheorem{lemma}[theorem]{Lemma}
\newtheorem{corollary}[theorem]{Corollary} 
\newtheorem{prop}[theorem]{Proposition}

\newtheorem*{theorem*}{Theorem} 
\newtheorem*{corollary*}{Corollary}
\theoremstyle{definition}

\newtheorem{remark}[theorem]{Remark}
\newtheorem{definition}[theorem]{Definition}
\newtheorem*{remark*}{Remark}

\newtheorem*{definition*}{Definition}
\newtheorem*{example*}{Example}
\newtheorem{conj}[theorem]{Conjecture}
\newtheorem{construction}[theorem]{Construction}

\newtheoremstyle{named}{}{}{\itshape}{}{\bfseries}{}{.0em}{\thmnote{#3}}
\theoremstyle{named}

\newcommand{\BC}{\mathbb C} 
\newcommand{\BR}{\mathbb R} 
 
 \newcommand{\BZ}{\mathbb Z}
\newcommand{\BF}{\mathbb F} 
 
\newcommand{\BP}{\mathbb P} \newcommand{\BG}{\mathbb G}

\newcommand{\CE}{\mathcal E} \newcommand{\CF}{\mathcal F}
\newcommand{\CG}{\mathcal G} \newcommand{\CH}{\mathcal H}
 
 \newcommand{\CL}{\mathcal L}
 
\newcommand{\CO}{\mathcal O} 
 
\newcommand{\CS}{\mathcal S} 
\newcommand{\CU}{\mathcal U} 
 
\newcommand{\CY}{\mathcal Y} 

\newcommand\smvee{\raise0.9ex\hbox{$\scriptscriptstyle\vee$}}
\newcommand{\Eff}{\overline{\text{Eff}}}
\newcommand{\Nef}{\text{Nef}}

\DeclareMathOperator{\Mor}{Mor}

\DeclareMathOperator{\rank}{rank}

\DeclareMathOperator{\Pic}{Pic}

\DeclareMathOperator{\codim}{codim}

\newcommand{\comment}[1]{}

\title[Genus One Curves on del Pezzo Threefolds]{Moduli Space of Genus One Curves on Quartic and Quintic del Pezzo Threefolds}

\author{Enhao Feng}
\address{Boston College, Chestnut Hill, 02467, MA, USA}
\email{fenge@bc.edu}
\author{Fumiya Okamura}
\address{The Institute of Science and Engineering, Chuo University, 
1-13-27 Kasuga, Bunkyo-ku, Tokyo 112-8551, Japan}
\email{fokamura941@g.chuo-u.ac.jp}

\begin{document}
\setstretch{1.15}
\begin{abstract}
    In this article, we study the space of smooth genus one curves on del Pezzo threefolds of degree $4$ and $5$. We describe the irreducible components of the Kontsevich moduli space generically parametrizing genus one stable maps with irreducible domains and classify the irreducible components of the morphism space from general elliptic curves. Our result verifies Geometric Manin's conjecture for all del Pezzo threefolds of degree $4$ and $5$ over the complex numbers.
\end{abstract}
\maketitle

\section{Introduction}\label{section: intro}

Fano varieties are a captivating class of varieties because of their rich geometry and arithmetic, and one intriguing problem is to study the moduli space of curves on them. Recent advancement of Geometric Manin's Conjecture \cite{LRT25b} provides an insightful approach to the classification of irreducible components of these moduli spaces. While many results have been concerning the space of rational curves (\cite{Thomsen1998, KP01, Harris2004, Castravet2004, Testa2005, Testa2009, Coskun2009, Riedl2019, Lehmann2019, Lehmann2021a, Shimizu2022, Burke2022, Beheshti2023, Okamura2024dP, JovinellyOkamura2024coindex3, Okamura2025AM} etc), in this paper we classify the space of genus one curves on certain del Pezzo threefolds of Picard rank one, i.e. three dimensional smooth Fano varieties with $\Pic X = \BZ H$ and $-K_X = 2H$, extending techniques in \cite{Fen26}. We work over the complex numbers throughout the paper.

Our first result classifies the irreducible components of the morphism space. Let $C$ be a smooth projective curve and $X$ be a smooth projective variety. For each numerical equivalence curve class $\alpha$, we denote by $\Mor(C,X,\alpha)$ the quasi-projective scheme parametrizing morphisms $f:C\to X$ satisfying $f_*C = \alpha$. When $X$ is a del Pezzo threefold with $\Pic X = \BZ H$, we also let $\Mor(C,X,e)$ to denote $\Mor(C,X,\alpha)$ for $\alpha$ satisfying $H\cdot\alpha = e$. We say $f$ is a \textit{free curve }(resp. a \textit{very free curve}) if $f^*T_X$ is generically globally generated and $h^1(C, f^*T_X) = 0$ (resp. $f^*T_X(-p)$ is generically globally generated and $h^1(C, f^*T_X(-p)) = 0$ for a general $p\in C$).

\begin{theorem}
    \label{irredMor} Let $X$ be a del Pezzo threefold with $\Pic(X) = \BZ H$ and $H^{3}\in \{4,5\}$. Let $E$ be an elliptic curve general in moduli. Then for $e\geqslant 6$, the space $\Mor(C, X, e)$ consists of exactly two irreducible components, of which one generically parametrizes free curves, and the other parametrizes covers of lines. 
\end{theorem}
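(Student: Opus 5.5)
We will study $\Mor(E,X,e)$ through the Kontsevich space $\overline{M}_{1,0}(X,e)$ and the forgetful map to $\overline{M}_{1,1}$. Every component of $\Mor(E,X,e)$ has dimension at least $-K_X\cdot\alpha + \dim X\cdot(1-g) = 2e$. We split components according to the image $f(E)$: either $f$ is birational onto its image, or $f$ is a multiple cover of a lower-degree curve. For $E$ general in moduli, the only way to get a non-birational map is for $f$ to factor through an isogeny or through a map to a rational curve. Isogenies to genus one curves of lower degree contribute components of smaller dimension, by induction on degree together with the dimension count. Covers of rational curves $f=g\circ\pi$ with $\pi:E\to\BP^1$ of degree $d$ and $g$ of degree $e/d$ contribute dimension $\dim\Mor(\BP^1,X,e/d)+\dim\{\pi\}$, where $\dim\{\pi\}=2d$ for fixed $E$. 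For $g$ free we have $\dim\Mor(\BP^1,X,e/d)= 2e/d+3$, so this family has dimension $2e/d+3+2d-3 = 2e/d+2d$ (quotienting by $\mathrm{Aut}\,\BP^1$). This beats $2e$ only when $e/d=1$, i.e.\ for covers of lines. Lines form a surface family on $X$, and the degree-$e$ maps $E\to\BP^1$ for fixed $E$ have dimension $2e$. Together with $\mathrm{Aut}(\BP^1)$ acting on the line, these give a family of dimension $2+2e+3-3=2e+2>2e$. This is therefore a component: it is closed under generization, since its dimension exceeds anything that could contain it from the birational locus, and it is irreducible because the space of lines on $X$ is an irreducible surface (the Fano surface of lines for $H^3=4$, and $\BP^2$ for $H^3=5$) and the Hurwitz space of degree-$e$ maps $E\to\BP^1$ is irreducible.

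For maps birational onto their image, we first show that every component has the expected dimension $2e$ and generically parametrizes free curves. We would use the result from \cite{Fen26} and the known classification of rational curves on these threefolds (lines, conics, and the irreducibility of $\Mor(\BP^1,X,d)$ for del Pezzo threefolds of degree $4,5$) in a "bend and break / movable bend and break" argument: a non-free component would sweep out a surface. We rule this out by bounding $h^1(f^*T_X)$ through the normal sheaf of curves on the swept surface, using the fact that $\Pic X=\BZ H$ and the surfaces of low $H$-degree containing elliptic curves are classified (hyperplane sections are del Pezzo surfaces of degree $4,5$).

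Irreducibility of the free locus comes next. Any free component, viewed in $\overline{M}_{1,0}(X,e)$, dominates $\overline{M}_{1,1}$. We degenerate a general member to a stable map whose domain is an elliptic curve (of low degree, e.g.\ a plane cubic section or degree $e_0\le 5$ base case) with free rational tails of lines and conics attached. We then use the irreducibility of the spaces of free rational curves and of the base-case genus one components, together with gluing/smoothing, to show that all free components contain a common boundary stratum. This gives a single main component for each $e\ge 6$ by induction on $e$: we break off a free line or conic, reducing $e$ by $1$ or $2$, with base cases $e=6,7$ checked directly.

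The main obstacle is this degeneration step. We must show that every free component actually contains such a reducible map, i.e.\ a movable bend-and-break statement for genus one curves with the $j$-invariant fixed. We would first try applying the rational-curve movable bend-and-break of Lehmann--Riedl--Tanimoto to the one-parameter families obtained by fixing $E$ and two points. We would also use the classification of Fen26 of components of $\overline{M}_{1,0}(X,e)$ for low $e$ to control the base cases, and restrict from general $E$ via the dominant map to $\overline{M}_{1,1}$.
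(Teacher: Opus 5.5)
Your count for non-birational maps is reasonable. The proposal has a genuine gap, however, at the claim that ``a non-free component would sweep out a surface.'' That implication holds for rational curves but fails in genus one. For a general member $f$ of a \emph{dominant} family, $f^*T_X$ is only generically globally generated, and on an elliptic curve this allows trivial summands: $f^*T_X\cong\CO_E\oplus\CF$ or $\CO_E^{\oplus 2}\oplus\CL$, so $h^1(E,f^*T_X)\in\{1,2\}$. Such a component dominates $X$ and consists of birational maps. It is either generically non-reduced, or generically smooth of dimension $2e+h^1>2e$. Either way it is an extra component of $\Mor(E,X,e)$, and nothing in your outline excludes it; in particular, your assertion that every birational component has dimension $2e$ is unsupported.

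Excluding these components is the content of Section~\ref{section: nonfree}. After disposing of types 1 and 3, the paper (in the non-reduced case) bounds the torsion of $N_s$ by $1$, shows $\mu^{\min}((N_s)_{tf})>3$ for $e\geqslant 6$, and applies Butler's bound $\mu^{\max}(M_{\CG}^{\vee})<3/2$. Feeding these into the Grauert--M\"ulich type theorem of \cite{LRT25b} forces the successive slope gaps of $f^*T_X$ to be $<4$. This is incompatible with a slope panel $(s_1,s_2,0)$ with $s_1+s_2=2e\geqslant 12$, and it is exactly where $e\geqslant 6$ enters.

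Your non-dominant case is also too thin, since the swept surface need not be a hyperplane section or of low degree. The paper instead bounds $\dim N\leqslant -K_Y\cdot C+1$ on a resolution $Y$ and uses movable/pseudo-effective duality to get $a(Y,f^*H)>3/2$. Theorem~\ref{Classifya-inv} then forces factorization through the universal family of lines, and Section~\ref{section: exceptional} shows such maps are covers of lines.

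The irreducibility step is also not carried out. You leave fixed-$j$ movable bend-and-break open, and applying the rational-curve result to families with $E$ fixed does not make sense, since no domain there is rational. The paper lets $j$ vary instead. In $\overline{M}_{1,0}(X,e)$ the locus through $e-1$ general points is a surface whose $j$-invariants fill $\overline{M}_{1,1}$, so it contains a curve of maps with nodal rational domains. After partial normalization, Mori's bend-and-break and \cite{Beheshti2022} apply, giving Theorem~\ref{Thm:MBB}. Induction down to $e=H^3$ (codimension-two linear sections) plus gluing with $R^{rat}_{e-H^3}$ then gives irreducibility of $R_e$. Note that $X$ contains no genus one curves of degree $3$, nor of degree $4$ when $H^3=5$, so your ``plane cubic'' base does not exist.

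Passing to a fixed general $E$ is a separate, necessary step, because an irreducible family over $\overline{M}_{1,1}$ can have reducible general fibre. The paper takes the Stein factorization $\phi\colon Y\to\overline{M}_{1,1}$ of $\widetilde{R}'_e\to\overline{M}_{1,1}$. If $\deg\phi>1$, then over a branch point corresponding to a smooth $E$ the fibre has a generically non-reduced component $N$ with $\dim N\geqslant 2e$. A curve through $e-1$ general points meets $N$ at some $g\colon Z\to X$, and there are two cases:
\begin{itemize}
\item $Z\cong E$: this gives a non-reduced point of $\Mor^{bir}(E,X,e)$, contradicting Theorem~\ref{Thm:no non-free component}, so the non-free analysis is needed again here.
\item $Z=E\cup\BP^1$ with free restrictions: the deformations with $E$ fixed have dimension at most $2e-1<\dim N$.
\end{itemize}
Your phrase ``restrict from general $E$ via the dominant map'' does not supply this step.
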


The above theorem is, in fact, a consequence of the following result which classifies the components of the Kontsevich moduli space $\overline{M}_{g,n}(X)$. Let $\widetilde{M}_{g,n}(X,e) \subset \overline{M}_{g,n}(X,e)$ denote the union of components which generically parametrize $n$-pointed genus $g$ stable maps of $H$-degree $e$ with irreducible domains. 

\begin{theorem}\label{irredKontsevich}
    Let $X$ be a del Pezzo threefold with $\Pic(X) = \BZ H$ and $H^{3}\in \{4,5\}$. 
    Then for each $e\geqslant 5$, the space $\widetilde{M}_{1,0}(X,e)$ consists of two irreducible components $R_{e}$ and $N_{e}$, where 
    \begin{itemize}
        \item $R_{e}$ is a $2e$-dimensional family generically parametrizing free birational stable maps with smooth domains; and
        \item $N_{e}$ is a $2e+2$-dimensional family parametrizing multiple covers of lines. 
    \end{itemize}
    When $H^3 = 4$, the space $\widetilde{M}_{1,0}(X,4)$ consists of $R_4$, $N_4$, and an extra component $Q_4$ parametrizing double covers of conics. When $H^3 = 5$, the space $\widetilde{M}_{1,0}(X,4)$ consists of only $N_4$ and $Q_4$. 
\end{theorem}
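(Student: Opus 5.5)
The plan is to split $\widetilde{M}_{1,0}(X,e)$ according to the image of a general map: multiple covers of lines, multiple covers of conics, and birational maps onto their image. The argument combines an expected-dimension count, Geometric Manin's Conjecture style accumulating-map techniques, and an induction on $e$ through gluing and smoothing, extending the method of \cite{Fen26}.

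\textbf{Dimension count and non-free loci.} Every component of $\overline{M}_{1,0}(X,e)$ has dimension at least $-K_X\cdot C + (\dim X-3)(1-g) = 2e$. Covers of lines form a family of dimension $\dim F(X) + \dim \overline{M}_{1,0}(\BP^1,e) = 2 + 2e$. This exceeds $2e$, so the covers of lines form a separate component $N_e$. It is irreducible because the Fano surface of lines $F(X)$ is irreducible and the space of degree-$e$ genus one covers of $\BP^1$ is irreducible.

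Covers of conics have dimension $\dim(\text{conics}) + \dim \overline{M}_{1,0}(\BP^1,d)$, where $e=2d$ and the conics form a $4$-dimensional family; this gives $4+2d = e+4$. That is at least $2e$ only when $e\le 4$, i.e. $d=2$, giving $Q_4$ of dimension $8$. For $e\ge 6$ these covers lie in the boundary of other components. For covers of higher-degree rational curves or of elliptic curves of lower degree, the same count shows the family has dimension less than $2e$, so it is not a component.

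For birational maps, I would use the a-invariant argument: by the classification of adjoint-rigid subvarieties of del Pezzo threefolds, only lines and conics (and no surfaces) can sweep out non-dominant families with excess dimension. Hence a general member of any other component is a free birational curve.

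\textbf{Irreducibility of the free part $R_e$.} I would use Movable Bend-and-Break for genus one combined with gluing. Any component of free birational genus one maps of degree $e\ge 5$ contains, in its boundary, a stable map $E\cup C_1$. Here $E\to X$ is a free genus one curve of degree $e-1$ or $e-2$, and $C_1$ is a free line or conic attached at a general point. Alternatively one can degenerate to a free rational curve of degree $e$ with a contracted elliptic tail, or to a rational curve with a nodal self-gluing. By induction each such boundary locus lies in a unique component. The induction needs two facts: the spaces of free rational curves of each degree on $X$ are irreducible (known for del Pezzo threefolds of degree $4,5$ via \cite{LRT25b} and the results on rational curves cited), and the space of free lines and conics is irreducible. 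Main-component smoothing of these gluings then shows that all free components coincide.

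\textbf{Base cases and the main obstacle.} The base case $e=4,5$ is explicit. For $H^3=4$, $X$ is an intersection of two quadrics in $\BP^5$. Its degree-$4$ elliptic curves are complete intersections of $X$ with a $\BP^3$, which gives an irreducible family $R_4$. For $H^3=5$, $X\subset\BP^6$ is a linear section of $G(2,5)$, and a quartic elliptic curve spans a $\BP^3$ whose intersection with $X$ is controlled by quadrics. I expect to show that every such curve degenerates to, or lies in the closure of, double conics, so that $R_4$ is absent.

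The main obstacle is the Bend-and-Break step for genus one in low degree. It requires showing that every free component contains such a reducible boundary point, and ruling out possible extra components of free curves whose general member is non-very-free or lies on a special surface, such as hyperplane sections containing many lines. I would first try the approach of \cite{Fen26}: fix the $j$-invariant and a point, deform to force breaking, and then use the known structure of lines on $X$ to control the resulting components.
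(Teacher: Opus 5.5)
Your overall architecture matches the paper's: dimension counts for covers of lines and conics, $a$-invariants for non-dominant families, then Bend-and-Break plus gluing by induction from explicit base cases. However, two load-bearing steps are missing. First, the $a$-invariant argument (Theorem \ref{Classifya-inv} with Theorem \ref{facunifamlinecor}) only shows that a component not parametrizing covers of lines is dominant. For genus one domains, dominance gives generic global generation of $f^*T_X$, not $h^1(E,f^*T_X)=0$, because $f^*T_X$ can still have $\CO_E$ summands. Concretely, a generically smooth type 2 component of $\Mor(E,X)$ maps dominantly onto a $2e$-dimensional component of $\widetilde{M}_{1,0}(X,e)$ with fixed $j$-invariant, whose general member is birational, dominant and non-free (Proposition \ref{prop: different type 2 comps}). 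Neither a dimension count nor $a$-invariants exclude it, so your sentence ``hence a general member of any other component is a free birational curve'' is unsupported. The paper needs all of Section \ref{section: nonfree} here: the Grauert--M\"ulich type bound \cite[Theorem 6.5]{LRT25b}, together with Lemmas \ref{N_s-tor} and \ref{mu_min at least 4} and Butler's bound on $\mu^{\max}(M_{\CG}^{\vee})$ (Theorem \ref{no type 2 comp}). It then uses Propositions \ref{prop: locally free normal sheaf} and \ref{Prop: balanced normal bundle} to get $SP(N_f)=(e,e)$. Without freeness and balancedness you cannot force a family through the maximal number $e-1$ of general points, and that is what drives the Bend-and-Break.

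Second, the genus one Movable Bend-and-Break, which you flag as the main obstacle, is the heart of the proof, and your suggested route does not reach it. Fixing the domain and a point and forcing breaking can end in a rational curve of degree $e$ with a contracted elliptic tail. That locus has dimension $2e+2>2e$, so it is its own component of $\overline{M}_{1,0}(X,e)$. Any such limit point in the closure of a component of $\widetilde{M}_{1,0}(X,e)$ therefore lies on at least two components and cannot single one out; the same objection kills your ``contracted elliptic tail'' alternative. The paper instead takes the $2$-dimensional family $S\subset M$ through $e-1$ general points, whose domains vary in moduli, and picks a curve $T\subset S$ of maps with nodal rational domains. It applies rational Movable Bend-and-Break \cite{Beheshti2022} to the normalizations and handles two outcomes. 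A nodal rational curve plus a $\BP^1$ is smoothed using Theorem \ref{no type 2 comp} and Remark \ref{rmk: kontsevich dominates M10}. A banana curve is smoothed because $\overline{M}_{0,2}(X,e_1)\to X^2$ is not birational, using \cite{Bea95} for conics when $H^3=4$. The paper then lowers the elliptic part to degree $H^3$ and smooths the rational tree into $R^{rat}_{e-H^3}$, so it needs only one irreducible gluing locus with smooth general point. Your line/conic version is unjustified as stated, and would additionally require the two boundary types to lie in one component. Finally, for $H^3=5$ and $e=4$ there are no birational genus one quartics at all. Such a curve equals the complete intersection of its span $\BP^3$ with two quadrics containing $X$, which cannot lie on $X$ by \cite[Proposition 4.4]{CKK25} (Proposition \ref{nodeg3or4dP5}). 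So ``lying in the closure of double conics'' is not the mechanism, and you also leave the base case $e=5$ for $H^3=5$ (Proposition \ref{deg5dP5}) unaddressed.
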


The analogue of the above theorem in the case of rational curves is known by \cite[Theorem 7.9]{Lehmann2019}, whose proof relies on inductively showing the irreducibility of lower degree components. Our approach follows along the same vein, and one of the key steps is the following Bend-and-Break result in the genus one setting:

\begin{theorem} \label{Intro:MBB} 
    Let $X$ be a del Pezzo threefold with $\Pic(X) = \BZ H$ and $H^{3}\in \{4,5\}$. 
    For $e\geqslant H^{3}+1$, any free component $M\subset\widetilde{M}_{1,0}(X,e)$ contains a stable map $f: Z = Z_1 + Z_2 \to X$ satisfying the following conditions:
    \begin{itemize}
        \item $Z_1$ is a smooth genus one curve and $f|_{Z_1}$ is free.
        \item $Z_2$ is a rational curve and $f|_{Z_2}$ is free.
        \item $f|_{Z_i}$'s are birational onto their images.
\end{itemize}
\end{theorem}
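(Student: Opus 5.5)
The plan is to run a genus-one analogue of the Movable Bend-and-Break argument of \cite{Lehmann2019}, combined with the techniques of \cite{Fen26}. Let $M\subset\widetilde{M}_{1,0}(X,e)$ be a free component. A general member is a free map $f:E\to X$ from a smooth genus one curve. Freeness gives $\dim M = -K_X\cdot f_*E = 2e$. Since $f$ is free, it deforms with moving image, so $f$ is birational onto its image for general $[f]\in M$. Otherwise $M$ would be a family of multiple covers of lower-degree curves, and a dimension count would contradict $\dim M = 2e$.

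\textbf{Step 1: forcing a degeneration.} Fix two general points $x_1,x_2\in X$ and consider the closed subvariety $M_{x_1,x_2}\subset\overline{M}_{1,2}(X,e)$ of maps through $x_1,x_2$ lying over $M$. It has dimension $2e+2-6=2e-4$. Next impose the $j$-invariant: fix the complex structure of the source and the position of the two marked points on it. This costs at most $2$ more conditions, leaving a family of dimension at least $2e-6\geqslant 1$ when $e\geqslant 4$. Along such a one-parameter family, the map from a fixed genus one curve $E$ with two fixed points sent to fixed points cannot stay in $\Mor(E,X)$ properly. This is the genus-one Bend-and-Break of \cite{Fen26}, which is in essence Mori's argument with two fixed points. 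Hence the family degenerates to a stable map with reducible domain, and by construction the limit still passes through $x_1$ and $x_2$.

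The hypothesis $e\geqslant H^3+1$ gives extra room. I would instead use a curve through two general points that moves in a surface, which lets me select the limit with more control.

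\textbf{Step 2: analyzing the limit.} A limit $f_0:Z\to X$ has a genus one core $Z_1$ (or a cycle of rational curves) with rational trees attached. I want to show that for a general such limit:
\begin{itemize}
\item the genus one component is contracted to nothing only in excluded configurations;
\item each non-contracted component through a general point is free;
\item there are exactly two components.
\end{itemize}
The tools are the following:
\begin{itemize}
\item On these del Pezzo threefolds every component of rational curves through a general point is free. The family of lines is $2$-dimensional, and by \cite[Theorem 7.9]{Lehmann2019} the rational curve spaces are irreducible of expected dimension.
\item A dimension count on the boundary strata: the sum of the dimensions of the free pieces, minus $3$ for each gluing node, must be at least $2e-1$ to lie in $M$. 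Non-free or multiply-covered pieces (covers of lines, double conics) have excess dimension but fail to pass through general points. Comparing against the bound excludes them.
\end{itemize}
Once a boundary point $Z_1\cup Z_2\cup\cdots$ with all pieces free is found, I smooth all but one rational tail. This is possible by gluing free curves, since the gluing of free curves is smoothable and stays in $M$. The result is a stable map $Z_1+Z_2$ in $M$ with $Z_1$ genus one and $Z_2$ rational, both free. Birationality onto the images is then arranged by generality, since a free family of a given class contains birational members as soon as the class is not that of a multiple line or conic in the excluded low degrees.

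\textbf{Main obstacle.} I expect the main obstacle to be ruling out limits in which the genus one part becomes a contracted component, or a multiple cover of a line or conic, with all the degree on rational tails, or in which the rational piece is a multiple cover. This is exactly where $e\geqslant H^3+1$ enters. For instance, genus one curves of degree $H^3$ include elliptic normal curves in hyperplane sections, and double conics appear in degree $4$. I would handle this by an explicit dimension estimate for each bad stratum:
\begin{itemize}
\item contracted elliptic component glued to a rational curve of degree $e$: the stratum has dimension $2e-2+1+1<2e$ after the constraints;
\item covers of lines or conics: their loci have dimension $2e+2$ but lie in $N_e$ or $Q_4$, not in $M$, since $M$'s general point is free and birational.
\end{itemize}
Such strata therefore cannot absorb the whole boundary divisor of the one-parameter family, so a good limit exists.
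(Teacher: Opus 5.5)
Your reduction to a one-parameter family is fine, apart from one point. Passing through two general points requires $N_f(-p-q)$ to be generically globally generated, which does not follow from freeness alone; the paper gets it from the balanced normal bundle (Propositions \ref{prop: locally free normal sheaf} and \ref{Prop: balanced normal bundle}).

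\textbf{The gap is in Step 2.} A fixed-domain, two-point Mori degeneration produces a limit $E\cup(\text{rational trees})$ that the dimension counts you propose cannot control, because the bad strata are not small.
\begin{enumerate}
\item \emph{The elliptic component may be contracted.} The locus of maps with a contracted genus one component glued to a degree $e$ rational curve has dimension $(2e+1)+1=2e+2>\dim M$. Your own estimate is not strict either: $2e-2+1+1=2e$. In your fixed-$(E,p_1,p_2)$ setting, take $E$ contracted to a point $y$, with rational curves through $x_1,y$ and $x_2,y$. These configurations form a family of dimension $2e-5$. That exceeds the $2e-6$ you guarantee for the family being degenerated, so no dimension count can keep the limit from being of this form.
\item \emph{Birationality of the genus one piece cannot be arranged by generality.} A double cover of a conic is a \emph{free} genus one map of degree $4$. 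Gluing it to a free rational curve of degree $e-4$ gives an unobstructed locus of dimension $(8+1)+(2(e-4)+1)-3=2e-1$, i.e.\ an admissible boundary divisor of $M$. For $H^3=5$ there are no birational genus one maps of degree $\leqslant 4$ at all (Proposition \ref{nodeg3or4dP5}). The Remark after Theorem \ref{Thm:MBB} shows a naive degeneration in degree $5$ lands exactly on a double conic plus a line.
\item \emph{Components not through $x_1,x_2$ are unconstrained.} The argument that non-free pieces ``fail to pass through general points'' says nothing about them.
\end{enumerate}

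\textbf{The missing idea is to never let the genus sit on a smooth elliptic component during the degeneration.}
\begin{enumerate}
\item The paper uses $SP(N_f)=(e,e)$ to impose the maximal number $e-1$ of general points (Construction \ref{construction of T}). This gives a $2$-dimensional family that still dominates $\overline{M}_{1,1}$.
\item It restricts to the one-parameter subfamily of maps from irreducible \emph{nodal rational} curves and normalizes.
\item It applies the rational Movable Bend-and-Break \cite[Theorem 1.4]{Beheshti2022}, which yields two free rational curves.
\end{enumerate}
Re-gluing the two preimages of the node gives one of two configurations:
\begin{itemize}
\item A birational nodal rational curve plus a free $\BP^1$. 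The nodal component is smoothed to a free elliptic curve using the bound $2e'-1$ on nodal rational loci, together with Theorem \ref{no type 2 comp}, Proposition \ref{prop: different type 2 comps} and Remark \ref{rmk: kontsevich dominates M10}.
\item A banana curve. One breaks off a line via \cite[Lemma 5.29]{Fen26} and smooths the banana. When both halves have the same degree, this uses that $s^{(2)}\colon \overline{M}_{0,2}(X,e_1)\to X^2$ is not birational.
\end{itemize}
In both cases the genus one piece, its freeness and its birationality are inherited from free birational rational curves, and ghost components never arise. Your outline has no substitute for this.
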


\subsection{Geometric Manin's Conjecture} \label{section: GMC}

Our strategy for the problem has its origin in the classical Manin's conjecture \cite{Franke1989} which concerns the counting of rational points of bounded height on Fano varieties. In the global function field setting of the conjecture, i.e. when $X$ is a Fano variety and $C$ is a smooth projective curve over $K = \BF_q$, the $K(C)$-points of $X$ correspond exactly to the $\BF_q$-points of the morphism space $\Mor(C,X)$, and an influential heuristic due to Batyrev \cite{Bat88} provides an asymptotic formula for the number of $\BF_q$-points on $\Mor(C,X)$. 

However, the heuristic relies on an optimistic, yet often false, assumption that given a fixed numerical class $\alpha$, the space $\Mor(C,X,\alpha)$ is irreducible of the expected dimension and parametrizes a dominant family of curves. For this reason, Lehmann and Tanimoto proposed the Geometric Manin's Conjecture in \cite{Lehmann2019} that distinguishes the irreducible components of $\Mor(C,X)$ into two types, the \textit{Manin components} and the \textit{exceptional components}. Generally speaking, they expect that to obtain the correct asymptotic formula predicted by Manin's conjecture, one should count rational points on the Manin components and ignore the contributions coming from the exceptional components. Moreover, they observed that the heuristic for $\Mor(C,X)$ is not restricted to the finite field setting, but also makes sense when $X$ is defined over $\BC$. The latter is the setting we are interested in, and we explain the expectations for these two types of components.

Firstly, Manin components are predicted to have the expected dimensions and generically parametrize free curves. In particular, the number of Manin components is conjectured to stabilize to the size of the Brauer group:

\begin{conj}[Geometric Manin's Conjecture (IV), \cite{LRT25b} Section 4] \label{GMC4} Let $X$ be a smooth Fano variety over $\BC$ with Brauer group $Br(X)$. Let $C$ be a smooth connected genus $g$ curve. There exists a curve class $\beta \in Nef_1(X)_{\BZ}$ such that for all $\alpha \in \beta + Nef_1(X)_{\BZ}$, there are exactly $|Br(X)|$ many Manin components in $\Mor(C, X,\alpha)$.
\end{conj}

The condition that the curve classes are sufficiently interior to the nef cone in Conjecture \ref{GMC4} is necessary. For example, the space $\widetilde{M}_{1,0}(X,4)$ in Theorem \ref{irredKontsevich} has an extra component parametrizing double covers of conics, and this will contribute at least one additional Manin component in degree $4$. When $X$ is a del Pezzo threefold of Picard rank one, the Brauer group $Br(X)$ is trivial, and Theorem \ref{irredMor} implies the Geometric Manin's Conjecture for quartic and quintic del Pezzo threefolds and general elliptic curves. 

On the other hand, the exceptional components usually exhibit pathological behaviors, such as having higher than the expected dimensions or having universal families whose evaluation maps have disconnected fibres. Their existence is expected to be controlled by a birational invariant of $X$:

\begin{definition}[$a$-invariant]\label{intro dfn: a-inv}
    Let $X$ be a smooth projective variety and $L$ a nef and big divisor on $X$. The $a$-\textit{invariant} of the pair $(X,L)$ is defined as 
    \[
    a(X,L) = \inf\ \{t\in \mathbb{R}\mid K_{X} + tL \in \overline{\mathrm{Eff}}^{1}(X)\}. 
    \]
\end{definition}

Generally speaking, if $M \subset \Mor(C,X)$ is an exceptional component, then $M$ is expected to be induced by an irreducible component $N\subset \Mor(C,Y)$ along some morphism $f:Y \to X$, where $Y$ either has a smaller dimension than $X$, or is equipped with a generically finite dominant non-birational morphism $f: Y \to X$ satisfying $a(Y,-f^*K_X) \geqslant a(X,-K_X)$. When $X$ is a del Pezzo threefold with $\Pic(X) = \BZ H$ and $H^{3}\in \{4,5\}$, we will show in Section \ref{section: exceptional} that every exceptional component parametrizes a family of maps factoring through the universal family of lines $\CU$ on $X$, and each map in the family is a cover of a line.

\subsection{Previous works}
In addition to \cite{Fen26}, which classified the space of elliptic curves on smooth cubic threefolds and served as the starting point of this work, there are several other algebraic varieties on which genus one curves are studied. For Grassmannian, the morphism space is explored in \cite{Bru87} and the Kontsevich space is analyzed in \cite{Qua23}. For certain homogeneous spaces, the connectedness of the Kontsevich space is studied in \cite{KP01}, and \cite{Per12} and \cite{PP13} addressed the irreducibility of the morphism space.

\smallskip
\noindent \textbf{Leitfaden. } 
In Section \ref{section: prelim}, we collect preliminary results on del Pezzo threefolds, vector bundles on elliptic curves, deformation theory of curves, and the $a$-invariant. 
In Section \ref{section: exceptional}, we classify all exceptional components of $\mathrm{Mor}(E,X)$, where the Hilbert schemes of lines play a central role. 
We classify the non-free components in Section \ref{section: nonfree} using Grauert-M\"{u}lich type theorem (cf. \cite[Section 4]{Fen26}). 
In Section \ref{section: lowdeg}, we study low degree curves of genus one on del Pezzo threefolds. 
In particular, we show that for each degree $e\geqslant H^{3}$, there exist elliptic curves of degree $e$ contained in del Pezzo threefolds with $H^{3}\in \{4,5\}$. 
In the last Section $\ref{section: free}$, we use an inductive argument to prove the main theorems of the paper. 

\smallskip
\noindent \textbf{Acknowledgments. } 
The first author would like to thank Brian Lehmann for helpful discussions and, more importantly, for his constant encouragement throughout the years. He would also like to thank Eric Jovinelly for many useful discussions related to the topic. The second author was supported by JSPS KAKENHI Grant Number JP25KJ0322.

\section{Preliminaries}\label{section: prelim}

Let $X$ be a smooth projective variety. 
We denote by $N_{1}(X)$ (resp. $N^{1}(X)$) the set of numerical classes of $1$-cycles (resp. divisors) on $X$ (with $\BR$-coefficients). 
Let $\Eff^{1}(X) \subset N^{1}(X)$ be the closure of the cone generated by effective divisors on $X$. 
Let $\Nef_{1}(X)\subset N_{1}(X)$ be the closed cone consisting of $1$-cycles $\alpha$ such that $D\cdot \alpha\geqslant 0$ for any $D\in \Eff^{1}(X)$. 

For a smooth projective connected curve $C$ and a $1$-cycle class $\alpha \in N_{1}(X)$, let $\mathrm{Mor}(C,X,\alpha)$ be the quasi-projective scheme parametrizing morphisms $f\colon C\rightarrow X$ such that $f_{*}C = \alpha$. 
We set 
\[
\mathrm{Mor}(C,X,\alpha) = \bigsqcup_{\alpha\in N_{1}(X)}\mathrm{Mor}(C,X,\alpha).
\]
For non-negative integers $g$ and $n$, we let $\overline{M}_{g,n}(X,\alpha)$ be the projective coarse moduli scheme parametrizing (isomorphism classes of) $n$-pointed stable maps $(f\colon C\rightarrow X, p_{1},\dots, p_{n})$ of genus $g$ on $X$ such that $f_{*}C = \alpha$. 
We set 
\[
\overline{M}_{g,n}(X) = \bigsqcup_{\alpha\in N_{1}(X)}\overline{M}_{g,n}(X,\alpha),
\]
and we denote by $\widetilde{M}_{g,n}(X, \alpha)\subset \overline{M}_{g,n}(X,\alpha)$ (resp. $\widetilde{M}_{g,n}(X)\subset \overline{M}_{g,n}(X)$) the union of irreducible components which generically parametrize stable maps with irreducible domains. 

\subsection{Del Pezzo Threefold} \label{subsection: dP3fold}
 
A \textit{del Pezzo threefold} of Picard rank one is a smooth Fano threefold $X$ with $\Pic(X) = \BZ H$ and $-K_X = 2H$. Such threefolds are completely classified:

\begin{theorem}[\cite{Iskovskikh1999}] Let $X$ be a del Pezzo threefold with $\Pic(X) = \BZ H$, then $1\leqslant H^3 \leqslant 5$ and $X$ is one of the following:
\begin{enumerate}[(i)]
    \item $H^3 = 1$: $X$ is a degree $6$ hypersurface in the weighted projective space $\BP(3,2,1,1,1)$.
    \item $H^3 = 2$: $X$ is a double cover of $\BP^3$ ramified along a smooth quartic surface.
    \item $H^3 = 3$: $X$ is a smooth cubic threefold.
    \item $H^3 = 4$: $X$ is a complete intersection of $2$ quadrics in $\BP^5$.
    \item $H^3 = 5$: $X$ is a section of the Grassmannian $\BG(1,4)$ by a general linear subspace of codimension $3$.
\end{enumerate}
\end{theorem}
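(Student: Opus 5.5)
This classification is cited from \cite{Iskovskikh1999}; I would follow the classical Fujita--Iskovskikh route, organized around the linear system $|H|$. The first step is to compute the basic invariants. Riemann--Roch on a Fano threefold with $-K_X = 2H$, together with Kawamata--Viehweg vanishing $h^i(X, tH) = 0$ for $i>0$ and $t\geqslant 0$, gives
\[
h^0(X,H) = H^3 + 2 .
\]
The degree $d = H^3$ is positive because $H$ is ample. The next step is ladder theory. Fujita's existence of good ladders shows that a general member $S\in|H|$ is a smooth surface. By adjunction, $-K_S = H|_S$, so $S$ is a del Pezzo surface of degree $d$. A general curve in $|H|_S|$ is then an elliptic curve $C$ of degree $d$, and the restriction sequences show that $|H|$ restricts surjectively along this ladder $X\supset S\supset C$.

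The case analysis is by $d$, using the ladder to control the base locus and the morphism $\varphi_{|H|}$.
\begin{itemize}
\item[(i)] For $d=1$, $|H|$ has a single base point. One computes $h^0(2H)$ and $h^0(3H)$ and shows that the graded ring $\bigoplus_t H^0(X,tH)$ is generated in degrees $1,2,3$ with a single relation in degree $6$. This embeds $X$ as a sextic in $\BP(1,1,1,2,3)$.
\item[(ii)] For $d=2$, $|H|$ is base point free and gives a double cover $X\to\BP^3$. Its branch divisor is a quartic by Hurwitz's formula, since $-K_X = 2H$.
\item[(iii)] For $d\geqslant 3$, $H$ is very ample, because it is very ample on $C$ and $S$ by degree reasons and this lifts along the ladder. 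So $X\subset\BP^{d+1}$ is a threefold of degree $d$ whose curve section is an elliptic normal curve.
\end{itemize}
For $d=3$ this makes $X$ a cubic hypersurface in $\BP^4$. For $d=4$, the ideal of $C\subset\BP^3$ is generated by two quadrics, and lifting these generators via projective normality shows that $X$ is a complete intersection of two quadrics in $\BP^5$. For $d=5$, the elliptic normal quintic is cut out by the $4\times4$ Pfaffians of a skew $5\times5$ matrix of linear forms (Buchsbaum--Eisenbud). Lifting this presentation exhibits $X$ as a linear section of $\BG(1,4)\subset\BP^9$ of codimension $3$.

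The main obstacle is the upper bound $d\leqslant 5$, which is where the Picard rank one hypothesis enters. For $d\geqslant 6$, $S$ is a del Pezzo surface of degree at least $6$. Such a surface has Picard rank at most $4$ and carries very special geometry: $\BP^1\times\BP^1$-type fibrations, or the Segre and Veronese pictures in degrees $6,7,8,9$. I would show that this geometry extends to $X$. The varieties that appear are $\BP^1\times\BP^1\times\BP^1$, the flag variety $W_6\subset \BP^2\times\BP^2$, the blowup $V_7$ of $\BP^3$ at a point, and $V_8=\BP^3$ with $H=\CO(2)$. Each of these has Picard rank greater than one, or has index $4$ rather than $2$, which contradicts the hypotheses. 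The cleanest way to do this is through Fujita's classification of polarized varieties with $\Delta$-genus one. Here
\[
\Delta(X,H) = \dim X + H^3 - h^0(X,H) = 3 + d - (d+2) = 1 ,
\]
so Fujita's list applies and directly gives $d\leqslant 8$ with these explicit models. Picard rank one and index two then rule out $d=6,7,8$ and leave exactly the five cases above.
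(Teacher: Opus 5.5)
The paper gives no proof of this statement; it is quoted from \cite{Iskovskikh1999}, where it is Fujita's classification of del Pezzo threefolds. That classification is established essentially along the route you sketch: Riemann--Roch giving $h^0(X,H)=H^3+2$, a smooth ladder $X\supset S\supset C$, a case analysis of $\varphi_{|H|}$ by degree, and the $\Delta$-genus-one list with Picard rank one and index two ruling out $H^3\in\{6,7,8\}$. So your outline is correct and consistent with the cited source, with one caveat: your bound $H^3\leqslant 5$ is obtained by invoking Fujita's classification itself, so that step cites the substance of the theorem rather than giving an independent argument, though this is no weaker than what the paper does.
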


We will refer to the del Pezzo threefold in case (iv) (resp. (v)) as a quartic (resp. quintic) del Pezzo threefold. The following results regarding the explicit geometry of the Fano surface of lines on $X$ will play an important role in Section \ref{section: exceptional} when we analyze the exceptional components.

\begin{theorem}[\cite{Newstead1968}]\label{quartic del pezzo equiv to genus two curve}
    There is a one-to-one correspondence between isomorphism classes of smooth genus two curves and projective equivalence classes of pencils of quadrics in $\BP^5$.
\end{theorem}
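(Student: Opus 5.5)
The plan is to pass through the classical Segre normal form, reducing both sides of the correspondence to the same object: six unordered distinct points on $\BP^1$ up to $PGL_2$. Here "pencils of quadrics" means pencils whose base locus is smooth, as for a quartic del Pezzo threefold. A genus two curve is hyperelliptic via its canonical map, so it is the double cover of $\BP^1$ branched at six distinct points. That configuration is unique up to $PGL_2$, and conversely every six distinct points arise. So it suffices to attach to each pencil a six-point configuration and show this is a bijection on equivalence classes.

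To a pencil $\{\lambda Q_1 + \mu Q_2\}$ with symmetric matrices $A,B$, I attach the binary sextic $\det(\lambda A + \mu B)$ and its zero set $\Delta\subset \BP^1$, the singular members of the pencil. Changing coordinates on $\BP^5$ multiplies the sextic by a nonzero constant. Changing the basis of the pencil acts on $\Delta$ by a Möbius transformation. Hence the $PGL_2$-class of $\Delta$ is an invariant of the projective equivalence class.

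Next I check that $\Delta$ consists of six distinct points. The sextic is not identically zero, since a smooth base locus forces a smooth member. If a root were multiple, or a singular member had corank $\geq 2$, then the base locus would be singular. I would verify this by a Jacobian computation at a point of the base locus lying on the vertex of a singular member, which exists because the vertex meets the other quadric. Conversely, if the six roots are distinct, the pencil has the diagonal form constructed next, and that base locus is visibly smooth by the Jacobian criterion. So the smooth-base-locus condition is exactly that $\Delta$ consists of six distinct points.

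For surjectivity, given six distinct points, I apply a Möbius transformation to place them at $a_1,\dots,a_6\in\BC$. I then take the pencil spanned by $\sum x_i^2$ and $\sum a_i x_i^2$; its discriminant is $\prod(\mu a_i + \lambda)$ up to sign and scaling, with roots the given points.

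The main step is injectivity: showing that the pencil is determined up to projective equivalence by the class of $\Delta$. For each root $t_i$ the singular quadric $Q_{t_i}$ has corank one, so it is a cone with a vertex $p_i$. I will show that $p_1,\dots,p_6$ are linearly independent and pairwise orthogonal with respect to every member of the pencil. The argument is that if $A' v = t_i B v$ and $A' w = t_j B w$ with $t_i\neq t_j$ (a generalized eigenvector computation), then $v^T B w = 0$. Also $v^T B v \neq 0$ by simplicity of the root. In the basis $\{p_i\}$ both quadrics are therefore diagonal. Since we work over $\BC$, rescaling each coordinate by a square root brings the pencil to the normal form $\sum x_i^2$, $\sum a_i x_i^2$, where the $a_i$ are the roots in an affine chart. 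Two pencils whose discriminant configurations are $PGL_2$-equivalent can first be rebased so that their roots coincide as ordered sets up to permutation. Permuting coordinates then makes them literally equal, which finishes the bijection.
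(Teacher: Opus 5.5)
Your argument is correct. The paper gives no proof of this result; it simply cites Newstead, so there is no competing route to compare. The discriminant sextic together with simultaneous diagonalization to $\sum x_i^2,\ \sum a_i x_i^2$, matched with $y^2=\prod(x-a_i)$, is the classical argument behind that citation, and you were right to state explicitly the smooth-base-locus hypothesis that the statement leaves implicit.

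One step needs tightening: a multiple root $t_0$ at which $A+t_0B$ has corank one. There the vertex is a single point $v$, so ``the vertex meets the other quadric'' is exactly what has to be shown, not a justification. It follows from Jacobi's formula, which gives $\frac{d}{dt}\det(A+tB)$ at $t=t_0$ equal to $c\,v^{T}Bv$ with $c\neq 0$. This is the same identity you later use to get $v^{T}Bv\neq 0$.
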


\begin{theorem}[\cite{Reid1972} Theorem 4.8]\label{FanosurfacedP4}
    Let $X$ be a quartic del Pezzo threefold. The Fano surface of lines $V$ of $X$ is isomorphic to the Jacobian $J(C)$ of the unique smooth genus $2$ curve $C$ under the correspondence of Theorem \ref{quartic del pezzo equiv to genus two curve}.
\end{theorem}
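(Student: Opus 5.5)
The plan is to compare both $V$ and $J(C)$ with the intermediate Jacobian $J(X)$, and to use the Abel--Jacobi map as the bridge between them.

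\textbf{Setup.} Write $X = Q_0\cap Q_\infty\subset\BP^5$ and consider the pencil $\{Q_t\}_{t\in\BP^1}$. Smoothness of $X$ means exactly six members $Q_{t_1},\dots,Q_{t_6}$ are singular, each a cone with a single vertex. A smooth quadric $Q_t\subset\BP^5$ carries two families of planes. As $t$ varies, these families form a double cover of $\BP^1$ branched at $t_1,\dots,t_6$. This double cover is the genus two curve $C$ attached to $X$ by Theorem \ref{quartic del pezzo equiv to genus two curve}.

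\textbf{Step 1: $V$ is a smooth surface.} For a line $\ell\subset X$ we have $N_{\ell/X}\cong\CO\oplus\CO$ or $\CO(1)\oplus\CO(-1)$, so $h^0(N_{\ell/X})=2$. Hence $V$ is a smooth surface, and its tangent space at $[\ell]$ is $H^0(N_{\ell/X})$.

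\textbf{Step 2: lines meeting a fixed line are parametrized by $C$.} Fix $\ell$ and a point $c\in C$ over $t$. Inside $Q_t$, the planes of the ruling $c$ that contain $\ell$ are exactly those through one chosen plane $\Pi_c\supset\ell$ of that ruling, since they correspond to one ruling of the quadric surface $\ell^\perp/\ell$. For any such plane, $\Pi_c\cap X=\Pi_c\cap Q_s$ is a conic containing $\ell$, so it determines a residual line $\ell_c$ meeting $\ell$. I would check that $c\mapsto\ell_c$ is an isomorphism from $C$ onto the curve $\Gamma_\ell\subset V$ of lines meeting $\ell$. This also handles the six branch points, where the two rulings coincide.

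\textbf{Step 3: the conic bundle and $J(X)\cong J(C)$.} Projecting from $\ell$ gives a conic bundle structure on $\mathrm{Bl}_\ell X$ over $\BP^3$ (or over a quadric surface). Its discriminant curve and the associated double cover identify $J(X)$ with a Prym variety. Using Step 2, this Prym should reduce to $J(C)$, giving a principally polarized isomorphism $J(X)\cong J(C)$. An alternative is the classical computation that $H^3(X,\BZ)$ is the rank four lattice coming from the vanishing cohomology of the pencil.

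\textbf{Step 4: the Abel--Jacobi map is an isomorphism.} Consider
\[
\Phi\colon V\to J(X)\cong J(C),\qquad \ell\mapsto [\ell-\ell_0].
\]
Its differential is identified, via the tangent bundle sequence and the Griffiths description, with
\[
H^0(N_{\ell/X})\to H^1(\Omega^2_X)^\vee .
\]
I would check that this is injective, so $\Phi$ is étale onto its image. Since $\dim V=2=\dim J(C)$, $\Phi$ is then a finite étale map onto the abelian surface $J(C)$, so $V$ is itself an abelian surface. It remains to prove $\deg\Phi=1$. For this, compute the intersection number $\Gamma_\ell^2$ of the curves from Step 2 inside $V$, and compare it with the theta divisor: by Step 2, $\Phi(\Gamma_\ell)$ is a translate of the Abel--Jacobi image of $C$, which is $\Theta$, and $\Theta^2=2$. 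Matching these intersection numbers forces the degree to be one.

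The main obstacle is this degree-one step, since étaleness alone only gives an isogeny. If the intersection-number argument is hard to control, I would instead argue directly. I would show that two lines $\ell,m$ with $\Phi(\ell)=\Phi(m)$ must coincide, by recovering $\ell$ from the divisor on $C$ that records which residual lines $\ell_c$ meet $m$. That divisor is determined by the class $\Phi(\ell)-\Phi(m)$.
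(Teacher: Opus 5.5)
Your main route, through the intermediate Jacobian $J(X)$, differs from the one the paper relies on. The paper does not prove the theorem; it cites Reid. In the proof of Proposition~\ref{quartic Theta divisor} it recalls that Reid's isomorphism factors as $V\dashrightarrow C^{(2)}\xrightarrow{u_2}J(C)$: a line $t$ disjoint from a fixed line $s$ goes to the two transversals $s_1+s_2$ of $s$ and $t$ inside $X\cap\langle s,t\rangle$. As written, your route has a false step and leaves its central claim unproved.

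\textbf{Step 3 fails.} Projecting $X$ from a line $\ell$ does not give a conic bundle; a threefold cannot be a conic bundle over $\BP^3$ in any case. A plane $\Pi\supset\ell$ meets each $Q_t$ in $\ell\cup L_t$, and the lines $L_t$ form a pencil with one base point. So a general such $\Pi$ meets $X$ in $\ell$ plus a single point, and $\mathrm{Bl}_\ell X\to\BP^3$ is birational. It contracts the surface swept out by lines meeting $\ell$ onto a smooth quintic curve $\Gamma$ of genus two, giving $\mathrm{Bl}_\ell X\cong\mathrm{Bl}_\Gamma\BP^3$. No Prym variety enters. The correct statement is $J(X)\cong J(\Gamma)$ by the blow-up formula, with $\Gamma\cong\Gamma_\ell\cong C$, because the fibres of the exceptional divisor over $\Gamma$ are exactly the lines meeting $\ell$. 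This is not cosmetic. It is what shows that the isomorphism $J(C)\cong J(X)$ is the one induced by $\Phi|_{\Gamma_\ell}$. You need that to assert that $\Gamma_\ell$ maps birationally onto a translate of $\Theta$. With only an abstract isomorphism $J(X)\cong J(C)$, the intersection count in Step 4 does not pin down $\deg\Phi$.

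\textbf{Step 4's injectivity is only asserted.} The injectivity of $H^0(N_{\ell/X})\to H^1(\Omega^2_X)^\vee$ is the analogue of the Clemens--Griffiths tangent bundle theorem and the heart of your approach. Without it you know neither that $\Phi$ is surjective nor that $V$ is an abelian surface. You also need $V$ connected; otherwise distinct components could each map isomorphically onto $J(X)$. Once both are established, the step you flag as the main obstacle is easy. On the abelian surface $V$, adjunction gives $\Gamma_\ell^2=2g(C)-2=2$. Let $D=\Phi(\Gamma_\ell)$ be a translate of $\Theta$. Since the deck transformations are translations, $\Phi^*D\equiv d\,\Gamma_\ell$, and $(\Phi^*D)^2=d\,D^2$. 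Hence $2d^2=2d$, so $d=1$.

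Your fallback in the last paragraph is, in substance, Reid's argument. For disjoint $\ell,m$, the lines meeting both are the two transversals in the quartic curve $X\cap\langle\ell,m\rangle$. They define a point of $C^{(2)}$, and $m$ is recovered as the fourth line of $X\cap\langle\ell,s_1,s_2\rangle$. That route needs no Hodge theory.

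\textbf{Minor point in Step 2.} $\ell^\perp/\ell$ is two-dimensional, not a quadric surface. So a smooth $Q_t$ contains exactly one plane of each ruling through $\ell$, and that uniqueness is what makes $c\mapsto\ell_c$ well defined.
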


\begin{theorem}[\cite{FN89} Theorem I, Lemma 2.1] \label{FN89} 
    Let $X$ be a quintic del Pezzo threefold. The Fano surface of lines $V$ of $X$ is isomorphic to $\BP^2$ and the universal family $\CU$ is isomorphic to the $\BP^1$-bundle $\BP_{\BP^2}(\CE_{0})$, where $\CE_{0} \cong \tau_*\CO_{Q}(4F_1)$ is a rank $2$ vector bundle, $\tau: Q = \BP^1\times\BP^1 \to \BP^2$ is a double cover branched along a smooth conic, and $F_1$ is the fibre of the first projection map $\pi_1: Q \to \BP^1$. Moreover, we have $\det(\tau_*\CO_Q(kF_1)) = \CO_{\BP^2}(k-1)$ and $\CO_{\CE_{0}}(1)$ is nef.
\end{theorem}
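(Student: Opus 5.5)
The plan is to use the $\mathrm{SL}_2$-quasi-homogeneous model of $X$. Up to isomorphism there is a unique quintic del Pezzo threefold $X$, so we may fix a convenient model. Write $V=\BC^2$. We take $X$ to be the closure of the $\mathrm{PGL}_2$-orbit of the class of $xy(x^4-y^4)$ in $\BP(S^6V)\cong\BP^6$. This is the classical Mukai--Umemura description, and it embeds $X$ in $\BG(1,4)\cap\BP^6$. The first step is to identify lines. A line on $X$ should correspond to a binary quadric $q\in\BP(S^2V)\cong\BP^2$. For such $q$, the sextics of the form $q\cdot(\text{fixed quartic pattern})$ sweep out a line.

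\textbf{Step 1: $V\cong\BP^2$.} I would check this identification $\varphi\colon\BP(S^2V)\to V$ on the open orbit and on the closed orbits separately. The orbits of $\mathrm{PGL}_2$ on $\BP^2$ are the conic $\Delta$ of squares and its complement. For the open orbit, one exhibits a line through each point of $X$ and counts dimensions. The Hilbert scheme of lines is a smooth surface, since $N_{\ell/X}$ is $\CO\oplus\CO$ or $\CO(1)\oplus\CO(-1)$ and $h^1=0$. I would then show that $\varphi$ is bijective, equivariant and finite onto a smooth target. This gives an isomorphism.

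\textbf{Step 2: identify the universal line.} Identify $\BP^2=\mathrm{Sym}^2\BP^1$, so that $\tau\colon Q=\BP^1\times\BP^1\to\BP^2$ is the quotient map, branched along the diagonal conic $\Delta$. The universal family $\CU$ is a $\BP^1$-bundle $\BP(\CE)$, where $\CE=p_*\CO_{\CU}(H)^{\vee\vee}$ or its dual, depending on convention. The goal is $\CE\cong\tau_*\CO_Q(4F_1)$. I would construct a natural equivariant map $\tau_*\CO_Q(4F_1)\to\CE$ by viewing the fibre over $q=\ell_1\ell_2$ as the span of the sextics attached to the two roots. Here the factor $4F_1$ comes from the degree-$4$ complementary factor. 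I would then check that this map is an isomorphism fibrewise, including along $\Delta$. I expect this step, and in particular the behaviour over the branch conic where the fibre is nonreduced, to be the main obstacle. The fallback is to compare Chern classes: both bundles are $\mathrm{SL}_2$-equivariant of rank $2$, and equivariant rank-$2$ bundles on $\BP^2$ are rigid enough that matching $c_1,c_2$ and the isotropy representation at the fixed points suffices.

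\textbf{Step 3: the determinant.} This follows from the norm formula $\det\tau_*L\cong\det\tau_*\CO_Q\otimes\mathrm{Nm}_\tau(L)$. Since $\Delta$ is a conic, $\tau_*\CO_Q=\CO\oplus\CO(-1)$, so $\det\tau_*\CO_Q=\CO(-1)$. Moreover $\mathrm{Nm}(F_1)=\tau(F_1)$ is a tangent line to $\Delta$, which has class $\CO(1)$. Hence $\det\tau_*\CO_Q(kF_1)=\CO(k-1)$.

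\textbf{Step 4: nefness.} I would prove that $\CE_0$ is globally generated, which implies that $\CO_{\CE_0}(1)$ is nef. This amounts to surjectivity of $H^0(Q,\CO(4F_1))\to H^0(\CO(4F_1)|_{\tau^{-1}(p)})$ for every length-$2$ fibre $\tau^{-1}(p)$.
\begin{itemize}
\item Off $\Delta$, the fibre is $\{(a,b),(b,a)\}$ with $a\neq b$. These two points lie in distinct $F_1$-fibres, and $|4F_1|$ separates them.
\item On $\Delta$, the fibre is a tangent vector along the diagonal direction, which is transverse to $F_1$. It is therefore also separated by $|4F_1|$, which is pulled back from $\CO_{\BP^1}(4)$.
\end{itemize}
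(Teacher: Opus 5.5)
The paper does not prove this statement; it quotes it from Furushima--Nakayama. So your argument has to stand on its own.

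\textbf{What works.} Steps 3 and 4 are correct. The norm computation is right, and global generation of $\tau_*\CO_Q(4F_1)$ is exactly the separation statement you check. There is one slip: the fibre of $\tau$ over a point of $\Delta$ points along $\ker d\tau$, which is the anti-diagonal direction, not the diagonal one. That direction is still transverse to $F_1$, so the conclusion is unaffected.

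\textbf{The gap: Step 2.} This step is the real content of the theorem, and the construction you describe fails exactly where you feared. For $q=\ell m$ the line is $L_q=\BP\langle \ell^5m,\ \ell m^5\rangle$. The span of these sextics is the fibre of the tautological \emph{sub}bundle $\CS|_{\BP^2}$, which has determinant $\CO_{\BP^2}(-5)$. It is not the fibre of $\CE=p_*\CO_\CU(H)$, which has determinant $\CO_{\BP^2}(5)$. Note also that $\det\tau_*\CO_Q(4F_1)=\CO_{\BP^2}(3)$, so your goal $\CE\cong\tau_*\CO_Q(4F_1)$ can only hold up to a twist by $\CO_{\BP^2}(1)$.

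The map that the ``span of the sextics attached to the two roots'' actually produces is the trace map $\tau_*\CO_Q(-5F_1-F_2)\to S^6\BC^2\otimes\CO_{\BP^2}$, where $F_2$ is the fibre of the second projection. This map is not injective on fibres over $\Delta$. Take coordinates $\ell=x+sy$, $m=x+ty$, the frame $1,s$ of $\tau_*\CO_Q$, and $F=\ell^5m$. The two generators go to $F(s,t)+F(t,s)$ and $sF(s,t)+tF(t,s)$. At $s=t=a$ these become $2\ell^6$ and $2a\ell^6$, which are proportional. Consistently, its determinant is $\CO(-7)$ against $\CO(-5)$. So it identifies $\CS|_{\BP^2}$ only with an elementary modification along the branch conic, and the fibrewise check along $\Delta$ fails.

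\textbf{How to repair Step 2.} Work on the dual side from the start. Restriction of linear forms, $\lambda\mapsto\big((\ell,m)\mapsto\lambda(\ell^5m)\big)$, gives a map $(S^6\BC^2)^\vee\otimes\CO_{\BP^2}\to\tau_*\CO_Q(5F_1+F_2)=\tau_*\CO_Q(4F_1)\otimes\CO_{\BP^2}(1)$.
Off $\Delta$ it is evaluation at the independent vectors $\ell^5m$ and $\ell m^5$. Over $\ell^2\in\Delta$, the length-two fibre gives evaluation at $\ell^6$ and at $\frac{d}{d\epsilon}\big|_{\epsilon=0}(\ell+\epsilon\delta)^5(\ell-\epsilon\delta)=4\ell^5\delta$. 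These span the tangent line to the rational normal sextic, which is $L_{\ell^2}$.

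Hence the map is surjective everywhere, which does two things at once:
\begin{itemize}
\item it \emph{defines} $\varphi\colon\BP^2\to\BG(1,6)$ as a morphism across $\Delta$, which your Step 1 needs and never supplies;
\item it identifies $\varphi^*\CS^\vee$ with $\tau_*\CO_Q(4F_1)(1)$.
\end{itemize}
So Step 2 becomes automatic once $\varphi$ is shown to be an isomorphism onto the Hilbert scheme.

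\textbf{Remaining gap in Step 1: surjectivity.} Exhibiting lines through every point and counting dimensions only shows that $\varphi(\BP^2)$ is a component of the smooth two-dimensional Hilbert scheme. You also need, for instance:
\begin{itemize}
\item that $L_{xy}$, $L_{x^2-y^2}$, $L_{x^2+y^2}$ are the only lines through $[xy(x^4-y^4)]$;
\item that no two-dimensional family of lines can sweep out only a surface, since that would force a plane in $X$.
\end{itemize}

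\textbf{The fallback.} The equivariant-rigidity argument is not valid as stated. $\mathrm{SL}_2$ has no fixed points on $\BP(S^2\BC^2)$. Moreover, equivariant bundles that agree on the open orbit can differ by equivariant elementary modifications along $\Delta$, which is precisely the phenomenon above.
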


\begin{lemma} [\cite{FN89} Lemma 2.2, Lemma 2.3]\label{FN90Lemma2.2} 
    Notation as in Theorem \ref{FN89}. There is a morphism $\psi: \CU \to \BP^6$ given by a sublinear series of the line bundle $\CO_{\CE_{0}}(1)\otimes\pi^*\CO_{\BP^2}(1)$ whose image $\psi(\CU)$ is isomorphic to $X$. In particular, the evaluation map factors through $\psi$.
\end{lemma}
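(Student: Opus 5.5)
The plan is to obtain $\psi$ as the composite of the evaluation map with the anticanonical-half embedding of $X$, and to show that this composite is given by the stated line bundle. Recall that $X = \BG(1,4)\cap \BP^6 \subset \BP^9$ is the Plücker embedding restricted to a codimension-$3$ linear section, so $H$ is very ample, $h^0(X,\CO_X(H)) = 7$ (e.g.\ by Riemann--Roch and Kodaira vanishing, using $-K_X = 2H$ and $H^3 = 5$), and $X\subset \BP^6$ is linearly normal. Let $\pi\colon \CU = \BP_{\BP^2}(\CE_0)\to \BP^2$ be the universal line and $ev\colon \CU\to X$ the evaluation map of Theorem \ref{FN89}. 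Then I set $\psi := \iota\circ ev$, where $\iota\colon X\hookrightarrow\BP^6$ is the embedding by $|H|$. By construction $\psi(\CU) = ev(\CU) = X$, because lines cover $X$, and the evaluation map factors through $\psi$. Moreover $\psi$ is given by the linear subseries $ev^*H^0(X,\CO_X(H))\subset H^0(\CU, ev^*\CO_X(H))$; this pullback is injective since $ev$ is surjective. So everything reduces to identifying the line bundle $ev^*\CO_X(H)$.

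To identify it, note that $ev$ maps each fibre of $\pi$ isomorphically onto a line, so $ev^*H$ has degree $1$ on fibres. Since $\Pic(\CU) = \BZ\xi\oplus\BZ h$, with $\xi = c_1(\CO_{\CE_0}(1))$ and $h = \pi^*c_1(\CO_{\BP^2}(1))$, this forces $ev^*H = \xi + a h$ for some integer $a$. To pin down $a$ I will use intersection numbers. A general point of a quintic del Pezzo threefold lies on exactly three lines, so $ev$ is generically finite of degree $3$ and $(ev^*H)^3 = 3H^3 = 15$. On the projective bundle we have the relation $\xi^2 - c_1\xi h + c_2 h^2 = 0$ with $c_i = c_i(\CE_0)$. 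By Theorem \ref{FN89}, $c_1 = \det\tau_*\CO_Q(4F_1) = 3$, which gives $\xi h^2 = 1$, $\xi^2 h = 3$ and $\xi^3 = 9 - c_2$. The condition then reads
\[
9 - c_2 + 9a + 3a^2 = 15 .
\]
It remains to compute $c_2(\tau_*\CO_Q(4F_1))$, which I would do by Grothendieck--Riemann--Roch for the finite map $\tau$, or via $\chi$ of twists together with the given determinant formula. I expect this to yield $a = 1$.

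As a cross-check, I would also compute $(ev^*H)^2\cdot h$. Geometrically this is $H$-degree computation on the surface swept out by the lines parametrized by a general line in $\BP^2$, i.e.\ the lines tangent to a fixed point of view of the conic. This gives a second equation that should independently confirm $a=1$, and it also guards against sign conventions for $\BP(\CE_0)$ (quotients versus sublines). I expect the determination of $a$, that is, the Chern class bookkeeping for $\tau_*\CO_Q(4F_1)$ and the convention check, to be the only real obstacle. Everything else is formal: once $ev^*\CO_X(H)\cong \CO_{\CE_0}(1)\otimes\pi^*\CO_{\BP^2}(1)$ is established, the sublinear series $ev^*H^0(X,\CO_X(H))$ of dimension $7$ defines $\psi$, and the image statement and the factorization follow as above.
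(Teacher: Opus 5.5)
Your route is a legitimate derivation and differs from the paper, which gives no argument here and simply imports the statement from \cite{FN89}. Setting $\psi=\iota\circ ev$ makes the image and factorization claims tautological. Since $\Pic(\CU)=\BZ\xi\oplus\BZ h$ is discrete, everything reduces to identifying $ev^*\CO_X(H)$. This yields directly $\CE=\pi_*ev^*\CO_X(H)\cong\CE_0(1)$, which is the form in which the paper later uses the lemma. It is not independent of \cite{FN89}, though. It takes as input the isomorphism $\CU\cong\BP_{\BP^2}(\CE_0)$ over $\BP^2$ from Theorem \ref{FN89}, and the fact that $ev$ has degree $3$; you should cite the latter explicitly.

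The step you expect to determine $a$ does not. Grothendieck--Riemann--Roch for $\tau$ gives $\tau_*\bigl(\mathrm{ch}(\CO_Q(4F_1))\,\mathrm{td}(Q)\bigr)=2+6h+5[\mathrm{pt}]$, hence $c_1(\CE_0)=3$ and $c_2(\CE_0)=6$. Your equation then reads $3a^2+9a-12=0$, with roots $a=1$ and $a=-4$. More careful bookkeeping cannot fix this. On $\BP(\CF)$ one has $\xi_{\CF}^3=c_1(\CF)^2-c_2(\CF)$, which is invariant under $\CF\mapsto\CF^\vee$, and $\CE_0(1)^\vee\cong\CE_0(-4)$. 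So the equation $(ev^*H)^3=15$ is blind to exactly the dualization ambiguity you flagged. Your ``cross-check'' is therefore an indispensable step, and as described (lines ``tangent to a fixed point of view of the conic'') it is not something you can actually compute.

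What is needed is positivity, not geometry. Since $ev^*H$ and $h$ are nef, $0\leqslant (ev^*H)^2\cdot h=\xi^2h+2a\,\xi h^2=3+2a$, which rules out $a=-4$. Alternatively, $ev^*$ embeds the $7$-dimensional $H^0(X,\CO_X(H))$ into $H^0(\BP^2,\CE_0(a))$, whereas $H^0(\BP^2,\CE_0(-4))=H^0(Q,\CO_Q(-4F_2))=0$, with $F_2$ a fibre of the second projection. With this added you get $a=1$ and the argument is complete. As a by-product, $(ev^*H)^2\cdot h=5$: the lines over a general line of $\BP^2$ sweep out a hyperplane section of $X$, which is the geometric statement you were reaching for.
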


The Hilbert scheme of conics on such $X$ is also irreducible:

\begin{theorem}[\cite{KPS18} Proposition 2.3.8]\label{hilb scheme of conics}
    Let $X$ be a del Pezzo threefold with $\Pic(X) = \BZ H$ and $H^3\in\{3,4\}$. Then the Hilbert scheme of conics $\overline{\CH}^{2,0}$ of $X$ is smooth and irreducible. 
\end{theorem}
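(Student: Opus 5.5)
The approach is to prove smoothness and irreducibility separately. Smoothness comes from a normal-bundle computation. Irreducibility comes from an explicit description of the Hilbert scheme of conics as a projective bundle over a smooth irreducible base: the Fano surface of lines when $H^3=3$, and the genus two curve of Theorem~\ref{quartic del pezzo equiv to genus two curve} when $H^3=4$.

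\emph{Step 1 (smoothness and dimension).} For a conic $C\subset X$ (a scheme with Hilbert polynomial $2t+1$, allowed to be reducible or a double line), I would show $H^1(C,N_{C/X})=0$. Then $\overline{\CH}^{2,0}$ is smooth at $[C]$ of dimension
\[
\chi(N_{C/X})=-K_X\cdot C+(\dim X-3)(1-p_a(C))=4 .
\]
For a smooth conic, $N_{C/X}\cong\CO(a)\oplus\CO(2-a)$. The inclusion $N_{C/X}\hookrightarrow N_{C/\BP^n}\cong \CO(4)\oplus\CO(2)^{n-2}$, combined with the fact that $X$ is cut out by quadrics (or is a cubic), should force $a\geqslant -1$, and this gives the vanishing. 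For a line pair $\ell_1\cup\ell_2$ I would use the standard gluing sequences together with the known normal bundles of lines, which are $\CO\oplus\CO$ or $\CO(1)\oplus\CO(-1)$. For double lines I would need a separate local computation, following \cite{KPS18}. I expect the double-line case, especially over lines with normal bundle $\CO(1)\oplus\CO(-1)$, to be the main technical obstacle. If the direct check fails there, I would try to deduce smoothness from the bundle descriptions in Step 2, by exhibiting the Hilbert scheme as isomorphic to a smooth bundle.

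\emph{Step 2 (cubic case).} Every conic $C$ spans a unique plane $\Pi$. Since $\Pi\not\subset X$, the intersection $X\cap\Pi$ is a plane cubic containing $C$, and so it is residually $C\cup\ell$ for a line $\ell\subset X$. Conversely, a plane $\Pi\supset\ell$ determines the residual conic. Hence $\overline{\CH}^{2,0}$ is isomorphic to the $\BP^2$-bundle of planes in $\BP^4$ containing the lines parametrized by the Fano surface. The Fano surface is smooth and irreducible, so $\overline{\CH}^{2,0}$ is as well, of dimension $4$.

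\emph{Step 3 (quartic case).} Write $X=Q_0\cap Q_\infty$ in $\BP^5$. For a conic $C$ with span $\Pi$, the pencil $\{Q_t\}$ restricts to a pencil of plane conics containing $C$. A dimension count shows that some member $Q_t$ contains $\Pi$, and that this member is unique; then $C=\Pi\cap Q_s$ for any $s\neq t$. Conversely, every plane $\Pi\subset Q_t$ with $\Pi\not\subset X$ gives a conic $\Pi\cap X$, and $X$ contains no planes. Hence conics correspond to pairs $(t,\Pi)$ with $\Pi$ a plane on $Q_t$.

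A smooth quadric fourfold has two families of planes, each isomorphic to $\BP^3$. The six singular members of the pencil are rank-$5$ cones with a single family of planes through the vertex. Thus the families of planes are parametrized by the double cover of $\BP^1$ branched at the six singular members, which is the genus two curve of Theorem~\ref{quartic del pezzo equiv to genus two curve}. Consequently $\overline{\CH}^{2,0}$ is a $\BP^3$-bundle over this smooth irreducible curve, hence smooth and irreducible of dimension $4$, consistent with Step 1. Checking that the family is locally trivial over the six branch points is the remaining point of care.
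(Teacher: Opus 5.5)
The paper gives no argument here; it quotes \cite{KPS18}. Your architecture is the right one: a $\BP^2$-bundle over the Fano surface when $H^3=3$, and pairs $(t,\Pi)$ with $\Pi$ a plane in $Q_t$, Stein-factorized through the genus two curve, when $H^3=4$. However, \emph{smoothness}, which is half of the statement, is not actually proved. You defer the non-reduced conics in Step 1. For $H^3=4$, the case this paper needs, your fallback rests on the unverified ``local triviality over the six branch points''.

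The fallback also needs a step you do not state: the correspondences must be isomorphisms of \emph{schemes}. A bijection on points onto a smooth variety says nothing about whether $\overline{\CH}^{2,0}$ is reduced or smooth. This step is routine. In a flat family of conics over $S$, the spans form a rank-$3$ subbundle, and restricting the pencil gives a surjection $\CO_S^{2}\to\CL$ onto a line bundle whose kernel defines $S\to\BP^1$. Conversely, $\Pi_S\cap X$ is cut out in $\Pi_S$ by a single section of $\CO_{\Pi_S}(2)$, twisted by a line bundle from $S$, that vanishes identically on no fibre, so it is flat. The residual construction in the cubic case is handled the same way. For $H^3=3$ this already finishes the proof, since the Fano surface is smooth and irreducible.

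For $H^3=4$, what you need is not local triviality but smoothness of the total space $\mathcal{F}\subset\BP^1\times\mathrm{Gr}(3,6)$ of pairs $(t,\Pi)$. This only has to be checked at planes $\Pi=\BP(W)$ lying in one of the six cones $Q_{t_0}$, and such a plane contains the vertex $v$. Write $q_t=q_{t_0}+(t-t_0)q_1$ and let $b_{t_0}$ be the bilinear form of $q_{t_0}$. A first-order deformation $(\dot t,\phi)$ with $\phi\in\mathrm{Hom}(W,\BC^6/W)$ must satisfy $2b_{t_0}(w,\phi(w))+\dot t\,q_1(w)=0$ for all $w\in W$. Since $\ker b_{t_0}=\langle v\rangle\subset W$, the forms $w\mapsto b_{t_0}(w,\phi(w))$ are exactly the quadratic forms on $W$ vanishing at $v$, a $5$-dimensional space. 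On the other hand $q_1(v)\neq 0$, because $v\notin X$ by smoothness of $X$. Hence $\dot t=0$ and the tangent space has dimension $9-5=4$. This equals the lower bound $10-6$, which comes from $\mathcal{F}$ being the zero locus of a section of a rank-$6$ bundle on a $10$-dimensional space. At planes in smooth members, $\mathcal{F}\to\BP^1$ is smooth of relative dimension $3$.

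So $\overline{\CH}^{2,0}\cong\mathcal{F}$ is smooth of dimension $4$. It is connected, hence irreducible, because the finite part of its Stein factorization is a double cover of $\BP^1$ in which six points have a single preimage. This also makes unnecessary the double-line analysis you expected to be the main obstacle: double lines in smooth members are covered automatically.

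Your Step 1 for smooth conics is fine once made precise. An $\CO(4)$ summand of $N_{C/X}$ would have to coincide with $N_{C/\Pi}$. Then every defining equation of $X$, of degree at most $3$, would restrict to a form on $\Pi$ vanishing doubly along $C$, forcing $\Pi\subset X$.
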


The components for the moduli space of rational curves on such del Pezzo threefolds has been completely classified, and we will need the following theorem in the proof of our main theorems in Section \ref{section: free}.

\begin{theorem}[\cite{Lehmann2019} Theorem 7.9; \cite{Shimizu2022} Theorem 1,1]\label{thm: LT19 Thm 7.9}
    Let $X$ be a smooth del Pezzo threefold with $\Pic(X) = \BZ H$. 
    When $H^{3} \leqslant 2$, assume $X$ is general in moduli.
    Then for $H$-degree $e\geqslant 2$, $\overline{M}_{0,0}(X,e)$ consists of two irreducible components:
    \[
    \overline{M}_{0,0}(X,e) = R^{rat}_e \cup N^{rat}_e
    \]
    such that $R^{rat}_e$ generically parametrizes maps that are birational onto their images and $N^{rat}_e$ parametrizes degree $e$ covers of lines.
\end{theorem}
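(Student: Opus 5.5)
The statement just above is Theorem \ref{thm: LT19 Thm 7.9}, which is quoted from \cite{Lehmann2019} and \cite{Shimizu2022}. We would not reprove it here, but the plan for its proof runs as follows, by induction on $e$.

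\textbf{Non-dominant components.} First we classify the components that are not dominant or not generically free. For a del Pezzo threefold with $\Pic(X)=\BZ H$ we have $a(X,H)=2$. Any subvariety $Y\subset X$ with $a(Y,H|_Y)>2$ is swept out by lines, so it is either $X$ itself or a line. This rules out non-dominant components except those whose maps factor through lines, and covers of lines form the component $N^{rat}_e$. Its dimension is $2e-2+\dim V=2e$, computed using the Fano surface of lines $V$. It is irreducible because the space of degree $e$ covers $\BP^1\to\BP^1$ is irreducible and $V$ is irreducible. Here $V$ is an abelian surface for $H^3=4$ by Theorem \ref{FanosurfacedP4}, is $\BP^2$ for $H^3=5$ by Theorem \ref{FN89}, and is irreducible in the general case for low degree. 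Dominant components that are not generically free are excluded by an $a$-invariant argument on covers, in the style of Lehmann--Tanimoto's classification of covers with $a$-invariant at least that of $X$.

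\textbf{Free components.} Next we show that every free component $M$ of degree $e\geqslant 3$ contains a chain $f_1\cup f_2$ of two free curves of lower degrees. We use Movable Bend-and-Break: a free rational curve of anticanonical degree at least $\dim X+2$ degenerates to a stable map with two free components. The base case is $e=2$, conics, whose irreducibility follows from Theorem \ref{hilb scheme of conics} when $H^3\in\{3,4\}$ and from classical descriptions otherwise. One also needs that lines are free, i.e.\ have normal bundle $\CO\oplus\CO$, which holds for a general line.

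\textbf{Gluing step.} By induction the free curves of degree $e_i<e$ lie in the unique component $R^{rat}_{e_i}$. The main fibre product $R^{rat}_{e_1}\times_X R^{rat}_{e_2}$ is irreducible, because the evaluation maps from the one-pointed families have irreducible general fibres. A smooth point of $\overline{M}_{0,0}$ that is a chain of free curves lies in a unique component. Hence every free component of degree $e$ contains a general point of the gluing locus built from $R^{rat}_1$ (lines) and $R^{rat}_{e-1}$, so there is only one free component.

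\textbf{Main obstacle.} The hardest step is showing that the evaluation map on $\overline{M}_{0,1}$ of free curves of each degree has irreducible general fibres, equivalently that there are no monodromy issues. In low degrees, especially for $H^3\leqslant 2$, this is where generality in moduli is needed and where \cite{Shimizu2022} does the case analysis. We would attack it by degenerating to chains of lines through a general point and using the connectedness of the space of lines through a point.
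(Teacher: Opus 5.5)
The paper gives no proof of this statement; it is quoted from \cite[Theorem 7.9]{Lehmann2019} and \cite[Theorem 1.1]{Shimizu2022}. Judged on its own, your sketch has the standard architecture but two genuine gaps.

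The first gap is that you conflate ``free'' with ``birational''. In genus $0$, a degree $e$ cover $f$ of a general line $\ell$ (with $N_{\ell/X}\cong\CO\oplus\CO$) satisfies $f^{*}T_{X}\cong\CO(2e)\oplus\CO\oplus\CO$. So $N^{rat}_{e}$ is a dominant, generically free component, not a non-dominant one, and your first paragraph misfiles it.

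The $a$-invariant step is also misstated. No subvariety has $a(Y,H)>2$, and a line has $a=2=a(X,H)$. The correct use of Theorem~\ref{largea} is this: a non-dominant component would force $a(Y,H)>2$, so every component is dominant. In characteristic $0$ a dominant family of rational curves is automatically generically free, so no $a$-cover argument is needed for that step. The non-birational components are then pinned down by the count $2e/k+2k-2<2e$ for $k$-sheeted covers of curves of degree $e/k\geqslant 2$, which you omit.

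More seriously, in the gluing step the induction hypothesis only places free curves of degree $e_i<e$ in $R^{rat}_{e_i}\cup N^{rat}_{e_i}$, not in $R^{rat}_{e_i}$. Movable Bend-and-Break can therefore return a chain with a piece that is a multiple cover of a line. As written, your argument applies verbatim to the free component $N^{rat}_e$. It would show that $N^{rat}_e$ contains a general $R^{rat}_1\cup R^{rat}_{e-1}$ chain and hence equals $R^{rat}_e$, which is absurd.

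To repair this, you must restrict to birational components and handle pieces lying in some $N^{rat}_{e_i}$. One way is to degenerate the multiple cover to a tree of copies of $\ell$; this is still a tree of free curves, hence a smooth point, and you can then reglue. The reduction from an arbitrary split $(e_1,e_2)$ to $(1,e-1)$ also needs an argument.

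The second gap is the irreducibility of the gluing locus, which you rightly flag as the crux but attack from a false premise. The evaluation map $\CU\to X$ for lines is generically finite of degree $>1$: three lines pass through a general point of the quintic, four for the quartic, six for the cubic threefold. Its general fibre is therefore a finite, disconnected set. There is no ``connectedness of the space of lines through a point'' to exploit, and your claim that the one-pointed families all have irreducible general fibres already fails for $R^{rat}_1$.

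What the argument actually needs is the following:
\begin{itemize}
\item $\CU$ is irreducible;
\item the space of free degree $e-1$ rational curves through a general point is irreducible;
\item the evaluation maps have controlled fibre dimensions, so that the Bend-and-Break chain lies in the closure of the main component of the fibre product (compare the finite set $S$ of \cite[Theorem 7.6]{Lehmann2019} invoked in Remark~\ref{reducibledomain}).
\end{itemize}
Already at $e=3$ this requires irreducibility of the family of conics through a general point. That does not follow from the irreducibility of the Hilbert scheme of conics in Theorem~\ref{hilb scheme of conics}, since an irreducible dominating family can have reducible fibres. Supplying these fibre-irreducibility statements is the substantive part of the proof, and your sketch offers no valid route to it.
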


\subsection{Vector bundles on elliptic curves}\label{subsec: vbelliptic}

The classification of vector bundles on elliptic curves \cite{Atiyah1957vbelliptic} is particularly useful for explicitly computing the deformation of genus one curves on $X$. We will use results in this section without referencing.

\begin{definition}
    Let $X$ be a smooth projective variety and $\CE$ a vector bundle on $X$. 
    We say that $\CE$ is \textit{globally generated} (resp. \textit{generically globally generated}) if the map $H^{0}(X,\CE)\otimes \CO_{X}\rightarrow \CE$ is surjective (resp. surjective at the generic point of $X$).
\end{definition}

\begin{theorem}[\cite{Atiyah1957vbelliptic}]
    Let $E$ be an elliptic curve and $\CE$ a vector bundle of rank $r$ and degree $d$ on $E$. 
    \begin{itemize}
        \item When $\CE$ is indecomposable, then $\CE$ is globally generated (resp. generically globally generated) if and only if either $\CE$ is the trivial line bundle $\CO_{E}$, or satisfies $d > r$ (resp. $d \geqslant r$). 

        \item When $\CE$ is decomposable, then $\CE$ is globally generated (resp. generically globally generated) if and only if each direct summand is globally generated (resp. generically globally generated). 
    \end{itemize}
    In particular, if $\CE$ is a generically globally generated vector bundle with $h^{1}(E,\CE) = k$, then $\CE$ contains $k$-copies of $\CO_{E}$ as direct summands. 
\end{theorem}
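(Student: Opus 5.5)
The plan is to reduce everything to Atiyah's description of indecomposable bundles and to the resulting cohomology of semistable bundles on $E$. I will use two standard inputs throughout.

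First, every indecomposable bundle on $E$ is semistable. An indecomposable bundle of degree $0$ is of the form $F_r\otimes L$, where $F_r$ is the unique indecomposable bundle of rank $r$ with trivial determinant admitting a nonzero section (built by successive nontrivial extensions of $\CO_E$), and $L\in \Pic^0(E)$.

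Second, for a semistable bundle $\CG$ of slope $\mu$:
\begin{itemize}
\item if $\mu>0$, then $h^1(\CG)=h^0(\CG^\vee)=0$, so $h^0(\CG)=\deg\CG$ by Riemann--Roch;
\item if $\mu<0$, then $h^0(\CG)=0$;
\item if $\mu=0$ and $\CG$ is indecomposable, then $h^0(\CG)\le 1$, with equality iff $\CG\cong F_r$.
\end{itemize}

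\textbf{The decomposable case and the trivial cases.} The decomposable case is formal: $H^0$ of a direct sum is the direct sum of the $H^0$'s, so the evaluation map splits summand by summand. Hence the sum is (generically) globally generated iff each summand is. For an indecomposable $\CE$ with $d<0$ there are no sections. For $d=0$, either $h^0=0$, or $\CE\cong F_r$ with $h^0=1$. In the latter case the sections span a rank one subsheaf, so $\CE$ is not even generically globally generated unless $r=1$, i.e.\ $\CE=\CO_E$. So from now on $\CE$ is indecomposable with $d>0$.

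\textbf{Global generation when $d>r$.} For any point $p$, the bundle $\CE(-p)$ is indecomposable of slope $(d-r)/r>0$, so $h^1(\CE(-p))=0$. Therefore $H^0(\CE)\to \CE|_p$ is surjective for every $p$, and $\CE$ is globally generated.

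\textbf{Generic generation when $d=r$.} Here $h^0(\CE)=r$. Let $\CF'$ be the saturation of the image of $H^0(\CE)\otimes\CO_E\to\CE$, and suppose its rank is $s<r$. Every section of $\CE$ factors through $\CF'$, so $h^0(\CF')=r$. On the other hand, every Harder--Narasimhan piece of $\CF'$ has slope at most $\mu_{\max}(\CF')\le\mu(\CE)=1$, by semistability of $\CE$. Using the cohomology facts above:
\begin{itemize}
\item a piece of slope in $(0,1]$ has $h^0=\deg\le\rank$;
\item a piece of slope $0$ has $h^0\le\rank$, applying the bound to each of its indecomposable summands;
\item a piece of negative slope contributes nothing.
\end{itemize}
Summing gives $h^0(\CF')\le s<r$, a contradiction. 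So $\CE$ is generically globally generated.

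\textbf{Failure of global generation when $d=r\geqslant 2$.} This is the step I expect to need the most care, since it requires choosing a bad point. Write $\CE(-p)=F_r\otimes L_p$, where $L_p$ depends on $p$ and satisfies $L_p^{\otimes r}\cong\det\CE\otimes\CO(-rp)$. Since $p\mapsto \det\CE(-rp)$ surjects onto $\Pic^0(E)$, and $F_r\otimes L\cong F_r$ only when $L\cong\CO_E$, I can choose $p$ with $\CE(-p)\cong F_r$. Then $h^0(\CE(-p))=1$, so the image of $H^0(\CE)\to\CE|_p$ has dimension $r-1<r$, and $\CE$ is not generated at $p$. For $r=1$ this case is just a degree one line bundle $\CO(q)$, which fails to be generated at $q$; this is consistent with the statement.

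\textbf{The final assertion.} Decompose a generically globally generated $\CE$ into indecomposable summands. Each summand is generically globally generated, so by the above it is either $\CO_E$ or has $d\ge r$ with $d>0$. The latter are semistable of positive slope and so have $h^1=0$, while $h^1(\CO_E)=1$. Hence $h^1(\CE)=k$ equals the number of $\CO_E$ summands.
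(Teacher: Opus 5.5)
The paper does not prove this statement. It is quoted from Atiyah and used as a black box, so your proposal is a derivation rather than a reconstruction of the paper's argument. Your strategy is the standard one: semistability of indecomposables, Atiyah's description $F_r\otimes L$ of degree-$0$ indecomposables, and the cohomology of semistable bundles according to the sign of the slope. The decomposable case, global generation for $d>r$, generic generation for $d=r$ (via the Harder--Narasimhan bound $h^0(\CF')\leqslant\rank\CF'$), and the count of $\CO_E$ summands are all correctly argued. Two points need repair.

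\textbf{The case $0<d<r$ is missing.} After handling $d\leqslant 0$ you pass to $d>0$, but then you only treat $d>r$ and $d=r$. The \emph{only if} directions of the first bullet need such $\CE$ to fail even generic global generation. Your last paragraph, which says each generically globally generated summand is either $\CO_E$ or has $d\geqslant r$, silently depends on this too. The fix is one line from your own inputs: $\CE$ is semistable of positive slope, so $h^0(\CE)=d<r$, and the image of $H^0(\CE)\otimes\CO_E\to\CE$ has rank at most $d<r$.

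\textbf{The choice of the bad point when $d=r$ does not work as written.} Picking $p$ with $\det\CE(-rp)\cong\CO_E$ only gives $L_p^{\otimes r}\cong\CO_E$, i.e.\ $L_p$ is $r$-torsion. There are $r^2$ such points, and only one of them has $L_p\cong\CO_E$. The fact that $F_r\otimes L\cong F_r$ forces $L\cong\CO_E$ does not close this gap. Argue with $L_p$ itself instead. For a fixed $q$,
\[
\CE(-p)\cong\CE(-q)\otimes\CO(q-p)\cong F_r\otimes L_q\otimes\CO(q-p),
\]
so $L_p\cong L_q\otimes\CO(q-p)$. Since $p\mapsto\CO(q-p)$ is a bijection $E\to\Pic^0(E)$, there is a unique $p$ with $L_p\cong\CO_E$, and the rest of your argument goes through.

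With these two fixes the proof is complete.
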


\begin{definition}
    Let $\CE$ be a vector bundle on an elliptic curve $E$. Suppose the Harder-Narasimhan filtration of $\CE$ is given by
    \[
        0 = \CE_0 \subset \CE_1 \subset \dots \subset \CE_k = \CE.
    \]
    We define the \textit{slope panel} $SP(\CE)$ of $\CE$ to be the following:
    \[
        SP(\CE) = (\underbrace{\mu(\CE_1/\CE_0), \dots}_{rk(\CE_1/\CE_0) \ \text{copies}}, \dots, \underbrace{\mu(\CE_k/\CE_{k-1}), \dots}_{rk(\CE_k/\CE_{k-1}) \ \text{copies}})
    \]
\end{definition}

\subsection{Deformation of curves}\label{subsec: deform} 

We briefly record the deformation theory of the morphism space $\Mor(C,X)$ and the Kontsevich space of stable maps $\overline{M}_{g,0}(X)$. Let $C$ be a smooth projective curve and $X$ be a smooth projective variety. 

\begin{definition}
  Let $f:C\to X$ be a non-constant morphism. We define the \textit{normal sheaf} $N_f$ to be the cokernel of the map $T_C \to f^*T_X$.  
\end{definition}

The tangent space of $\Mor(C,X)$ at $[f]$ is given by $H^0(C, f^*T_X)$ and the obstruction space is $H^1(C,f^*T_X)$. On the other hand, if we consider $f$ as an element of the Kontsevich space, then the tangent space of $\overline{M}_{g,0}(X)$ at $[f]$ is given by $H^0(C, N_f)$ and the obstruction space is $H^1(C, N_f)$. 

We will often be looking at family of morphisms whose images go through fixed points in $X$. Specifically, fixing $r$ distinct general points $p_1,\dots,p_r$ on $X$ and let $W \subset \overline{M}_{g,0}(X)$ denote the locus containing deformations of $f$ through the $p_i$'s. Then tangent space of $W$ at $[f]$ is $H^0(C, N_f(-p_1-\dots-p_r))$ and the obstruction space is $H^1(C, N_f(-p_1-\dots-p_r))$. 

\begin{theorem}[\cite{Lehmann2022a} Proposition 2.8] \label{thm: LT22a Prop 2.8}
    Let $W$ be the locus defined above and $[f]$ a general member of $W$. If $W$ parametrizes a dominant family of curves, then $N_f(-p_1-\dots-p_r)$ is generically globally generated. Conversely, if $N_f(-p_1-\dots-p_r)$ is generically globally generated with $h^1(C, N_f(-p_1-\dots-p_r)) = 0$, then $W$ parametrizes a dominant family of curves.
\end{theorem}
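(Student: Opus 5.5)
The plan is to study the universal curve over $W$ and its evaluation map, and to identify the differential of this map with evaluation of sections of $N_f(-p_1-\dots-p_r)$.

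\emph{Setup.} To make the condition ``passing through $p_1,\dots,p_r$'' precise, I would work in $\overline{M}_{g,r}(X)$ and regard $W$ as (a component of) the fibre $\mathrm{ev}^{-1}(p_1,\dots,p_r)$ of the evaluation map at the $r$ marked points $q_1,\dots,q_r$. Let $\pi\colon\CU_W\to W$ be the universal curve and $F\colon\CU_W\to X$ the universal map. ``$W$ parametrizes a dominant family'' means that $F$ is dominant.

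The basic identification concerns a point $[f]\in W$ and a general point $c\in C$. Near such $c$, $f$ is an immersion: we are in characteristic $0$ and $f$ is non-constant. Hence $N_f$ is locally free near $c$. The Zariski tangent space $T_{[f]}W$ is contained in $H^0(C,N_f(-q_1-\dots-q_r))$. With this, $dF_{([f],c)}\colon T_{[f]}W\oplus T_cC\to T_{f(c)}X$ has the following description:
\begin{itemize}
\item on $T_cC$ it is $df_c$, whose image is the tangent line to $f(C)$;
\item modulo that line, it sends a first-order deformation $\sigma$ to its value $\sigma(c)\in N_f|_c$.
\end{itemize}
This is the standard computation: a first-order deformation of the map restricted to the point $c$ gives a tangent vector at $f(c)$, well defined modulo reparametrization, i.e. modulo $df(T_C)$. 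I expect this identification to be the main point needing care. In particular one must ensure that torsion of $N_f$ (at ramification points of $f$) and the marked points play no role, which is why I evaluate only at a general $c$ away from them.

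\emph{Forward direction.} Assume $F$ is dominant. Replace $W$ by its reduction and choose $[f]$ general, so that $W_{\mathrm{red}}$ and $\CU_W$ are smooth at the relevant points. By generic smoothness in characteristic $0$, $dF$ is surjective at a general point $([f],c)$. Quotienting by $df(T_cC)$, the composite
\[
T_{[f]}W_{\mathrm{red}}\subset H^0(C,N_f(-\textstyle\sum q_i))\to N_f|_c
\]
is surjective. So the global sections of $N_f(-\sum q_i)$ span the fibre at a general point, which is generic global generation.

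\emph{Converse.} Assume $h^1(C,N_f(-\sum q_i))=0$. The relevant obstruction space vanishes, so $W$ is smooth at $[f]$ with tangent space all of $H^0(C,N_f(-\sum q_i))$. Every section is therefore realized by an actual one-parameter deformation in $W$. By generic global generation, for general $c$ these sections surject onto $N_f|_c$. Together with $df(T_cC)$ this gives surjectivity of $dF_{([f],c)}$ onto $T_{f(c)}X$. Hence $F$ is a submersion at $([f],c)$, so its image contains an open set and $F$ is dominant. That is, $W$ parametrizes a dominant family of curves.
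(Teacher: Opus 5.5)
Your proof is correct. The paper gives no proof of its own and simply quotes \cite{Lehmann2022a}; your argument is the standard proof of that result: identify the differential of the universal evaluation map at a general point $([f],c)$ with evaluation of sections of $N_f(-\sum q_i)$ modulo $df(T_cC)$, use generic smoothness for the forward direction, and use unobstructedness plus the submersion criterion for the converse. Your only imprecision is harmless: the universal curve should be taken over the moduli stack, or over a generically finite cover of $W$ as the paper does elsewhere, rather than over the coarse space.
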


\subsection{\textnormal{$a$}-invariant}\label{subsec: ainv}

As we have seen in the introduction, $a$-invariant plays a fundamental role in Geometric Manin's conjecture. In this section, we study surfaces which admit a generically finite morphism to del Pezzo threefold of Picard rank one. 

\begin{definition}\label{dfn: a-inv}
    Let $X$ be a smooth projective variety and $L$ a nef and big divisor on $X$. 
    The \textit{Fujita invariant} (or, $a$-\textit{invariant}) of the pair $(X,L)$ is defined as 
    \[
    a(X,L) = \inf\ \{t\in \mathbb{R}\mid K_{X} + tL \in \overline{\mathrm{Eff}}^{1}(X)\}. 
    \]
    We say that the pair $(X,L)$ is \textit{adjoint rigid} if the Iitaka dimension $\kappa(X, K_{X}+a(X,L)L)$ is equal to zero. 
\end{definition}

When $X$ is a singular projective variety, we define $a(X,L) := a(\widetilde{X}, \phi^{*}L)$, where $\phi\colon \widetilde{X}\rightarrow X$ is a smooth resolution of $X$. This is well defined by \cite[Proposition 2.7]{Hassett2015}. 

\begin{definition}\label{dfn: a-cover}
    Let $X$ be a smooth projective variety and $L$ a nef and big divisor on $X$. 
    We say that $f\colon Y\rightarrow X$ is an $a$-\textit{cover} if $f$ is a dominant generically finite morphism that is not birational such that $a(Y, f^{*}L) = a(X,L)$. 
\end{definition}

The following result classifies projective varieties with large $a$-invariants.

\begin{theorem}
    [\cite{Hor10} 1.3. Proposition; \cite{Fuj89}]\label{a-inv-cor} Let $Z$ be a smooth projective variety of dimension $n$ over an algebraically closed field and let $H$ be a big and nef divisor on $Z$.
    \begin{enumerate}
    \item If $a(Z, H) > n$, then $a(Z, H) = n+1$ and the pair $(Z,H)$ is birationally equivalent to $(\BP^n, L)$ where $L$ is the hyperplane section.
    
    \item If $a(Z, H)  = n$ and $(Z, H)$ is adjoint rigid, then $(Z,H)$ is birationally equivalent to $(Q, L)$ where $Q$ is a quadric hypersurface, possibly singular, and $L$ is the hyperplane section on $Q$.
    
    \item If $a(Z, H) = n$ and $(Z, H)$ is not adjoint rigid, then up to birational equivalence, the canonical map $\pi: Z\to C$ realizes $Z$ as a $\BP^{n-1}$-bundle over a curve $C$ and $L = \CO_\pi(1)$.
    
    \item If $n-1 < a(Z,H) < n$, then $(Z,H)$ is birationally equivalent to $(\BP_{\BP^2}(\CO(2)\oplus\CO^{n-2}), L)$ with $L = \CO_{Z/\BP^2}(1)$. In this case, the pair $(Z,H)$ is adjoint rigid and $a(Z,H) = n - \frac{1}{2}$.
    \end{enumerate}
\end{theorem}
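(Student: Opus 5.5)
The plan is to reduce to a Mori fibre space via the minimal model program, then use the numerical constraints on its fibres. First, $a(Z,H)$ is a birational invariant of the pair (cf.\ \cite[Proposition 2.7]{Hassett2015}), so we may replace $Z$ by any smooth birational model and $H$ by its pullback. Since $H$ is big and nef, for $0<t<a:=a(Z,H)$ the divisor $K_Z+tH$ is not pseudoeffective. Hence $Z$ is uniruled.

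Write $H\sim_{\mathbb Q}A+D$ with $A$ ample and $D\geq 0$ effective with small coefficients. We run a $(K_Z+(a-\varepsilon)H)$-MMP with scaling, as in BCHM, on the klt pair $(Z,(a-\varepsilon)D')$ for a suitable effective $D'\sim_{\mathbb Q}H$. This terminates in a Mori fibre space $g\colon Z'\to W$. On its general fibre $F$ we have $K_F+aH_F\equiv 0$, with $\rho(Z'/W)=1$. Along the way $H$ stays big and $a$ is unchanged, because the MMP contracts only $(K+aH)$-negative loci.

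Next, bound $a$ by lengths of rational curves on $F$. A minimal rational curve $\ell$ on $F$ through a general point satisfies $-K_F\cdot\ell\leqslant\dim F+1$, and $H\cdot\ell\geqslant 1$ where $H$ is Cartier. This gives $a\leqslant \dim F+1\leqslant n+1$.
\begin{itemize}
\item If $a=n+1$, then $F=Z'$ and every minimal curve has $H$-degree $1$ and anticanonical degree $n+1$. By the Cho--Miyaoka--Shepherd-Barron/Kebekus characterization (or Kobayashi--Ochiai when $H$ is ample on the model), $(Z',H)\cong(\BP^n,\CO(1))$. This gives (1).
\item If $a=n$, then either $\dim W=0$ and $Z'$ is Fano with $-K\equiv nH$, so Kobayashi--Ochiai gives a quadric; or $\dim W=1$ and $F$ has $a=\dim F+1$, so $F\cong\BP^{n-1}$ with $H_F=\CO(1)$, giving a $\BP^{n-1}$-bundle over a curve birationally.
\end{itemize}
The two cases are separated by the Iitaka dimension. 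The divisor $K+aH$ is the pullback of a divisor from $W$, and one checks it is effective of positive degree when $W$ is a curve (not adjoint rigid) and trivial when $W$ is a point (adjoint rigid). This gives (2) and (3), with the canonical map equal to $g$ in case (3).

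For $n-1<a<n$, the bound $a\leqslant\dim F+1$ forces $\dim F\geqslant n-1$. If $\dim W=1$, then $F$ would satisfy $\dim F<a<\dim F+1$, and the fibrewise version of the previous analysis excludes this: on a Fano of dimension $d$ with $-K_F\equiv aH_F$ and $H_F$ Cartier, $a\in\{d,d+1\}$ or $a\leqslant d-1$. So $W$ is a point and $Z'$ is a $\mathbb Q$-factorial terminal Fano with $\rho=1$ and $-K_{Z'}\equiv aH$, where the pushforward of $H$ is only $\mathbb Q$-Cartier.

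I expect this last case to be the main obstacle, since the Cartier argument breaks down. I would first try to run the argument on a general complete intersection surface $S$ cut out by $n-2$ general members of $|mH|$, using adjunction to reduce the statement to $n=2$ with $a(S,H_S)\in(1,2)$. On surfaces, the classification of minimal models with $K+aH$ numerically trivial and non-Cartier $H$ leads to the cone over the conic, i.e.\ $\BP(1,1,2)\cong\BP_{\BP^1}(\CO(2)\oplus\CO)$ contracted, with $a=3/2$. Lifting back via Fujita's $\Delta$-genus classification identifies $Z'$ with the cone over the Veronese surface, birational to $\BP_{\BP^2}(\CO(2)\oplus\CO^{n-2})$, with $a=n-\tfrac12$ and adjoint rigid. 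This gives (4).
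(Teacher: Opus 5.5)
The paper gives no proof of this statement; it is quoted from \cite{Hor10} and \cite{Fuj89}. Your sketch therefore has to stand on its own. Running an MMP down to a Mori fibre space is the natural strategy, but two steps fail.

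\textbf{First gap: the dichotomy separating (2) from (3) is false.} A Mori fibre space over a curve does not force $K+aH$ to be pulled back from a divisor of positive degree. Take $Z=\mathbb{P}_{\mathbb{P}^1}(\mathcal{O}(1)^{\oplus 2}\oplus\mathcal{O}^{\oplus(n-2)})$ with $H=\xi$ the tautological class; for $n=2$ this is $(\mathbb{P}^1\times\mathbb{P}^1,\mathcal{O}(1,1))$.
\begin{itemize}
\item Since $K_Z=-n\xi$, we get $a=n$ and $\kappa(K_Z+n\xi)=0$. So the pair is adjoint rigid, and indeed $|\xi|$ maps $Z$ birationally onto a rank-$4$ quadric.
\item Your $(K_Z+(n-\varepsilon)\xi)$-MMP contracts the fibre class of $\pi$. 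For $n\geqslant 3$ this is the only negative extremal ray, since the other ray is $K_Z$-trivial.
\item The MMP therefore stops at $\pi\colon Z\to\mathbb{P}^1$. Your argument then puts this pair in the ``$\mathbb{P}^{n-1}$-bundle over a curve, not adjoint rigid'' branch, which is wrong.
\end{itemize}
To prove (2) you must also handle $\dim W=1$ with $K_{Z'}+nH'$ numerically trivial, and show such models are birational to (possibly singular) quadrics.

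\textbf{Second gap: after the MMP you treat $H'=\phi_*H$ on $Z'$ as Cartier.} You use this for $H'\cdot\ell\geqslant 1$ on minimal curves (to get $a\leqslant\dim F+1$), for Kobayashi--Ochiai, and for the fibrewise exclusion when $\dim W=1$. In case (4) you reduce to ``$Z'$ is a $\mathbb{Q}$-factorial terminal Fano with $\rho=1$ and $-K_{Z'}\equiv aH'$''. These data do not determine the answer. The weighted projective space $\mathbb{P}(1,1,2,3)$ is terminal with $-K=\mathcal{O}(7)$.
\begin{itemize}
\item $H'=\mathcal{O}(2)$ gives $a=7/2\in(3,4)$, contradicting (1).
\item $H'=\mathcal{O}(3)$ gives $a=7/3\in(2,3)$ rather than $5/2$.
\item The minimal curves through a general point (images of $\mathbb{P}(1,3)$ through the $\tfrac13(1,1,2)$ point) have $\mathcal{O}(2)$-degree $2/3<1$.
\end{itemize}
The missing ingredient is to use that $H'$ is the pushforward of a nef Cartier divisor on the smooth $Z$, and that on a common resolution $p\colon\widetilde Z\to Z$, $q\colon\widetilde Z\to Z'$ one has $p^*(K_Z+aH)\geqslant q^*(K_{Z'}+aH')$. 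This is the information that must exclude such models, and your proposal never uses it.

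\textbf{Your fallback for (4) does not repair this.}
\begin{itemize}
\item Adjunction along $n-2$ members of $|mH|$ shifts the threshold by $(n-2)m$. So you would need $|H|$ itself to be basepoint free, which a nef and big $H$ need not be.
\item Restriction only gives $a(S,H_S)\leqslant a-(n-2)$, because a non-pseudoeffective divisor can restrict to a pseudoeffective one. So $a(S,H_S)>1$ is not guaranteed.
\item The surface case of (4) is $(\mathbb{P}^2,\mathcal{O}(2))$, not the quadric cone $\mathbb{P}(1,1,2)$. The cone's hyperplane class has $a=2$, so it belongs to case (2).
\item Fujita's $\Delta$-genus classification needs an ample Cartier polarization, which you do not have on $Z'$.
\end{itemize}
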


\begin{theorem}[\cite{Beheshti2022} Theorem 4.1; \cite{Burke2022} Theorem 3.10]\label{largea}
    Let $X$ be a del Pezzo threefold with $\Pic(X) = \mathbb{Z}H$. 
    Then 
    \begin{itemize}
        \item there is no subvariety $Y$ with $a(Y, H) > a(X, H) = 2$; 

        \item any $a$-cover $f\colon Y\rightarrow X$ factors rationally through the universal family of $H$-lines on $X$. 
    \end{itemize}
\end{theorem}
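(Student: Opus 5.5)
We would treat the two assertions separately. Both rest on the classification in Theorem \ref{a-inv-cor} and on the fact that $\Pic(X) = \BZ H$.

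\emph{No subvariety with larger $a$-invariant.} A proper subvariety $Y \subsetneq X$ is either a curve or a surface, and $a(X,H)=2$ because $-K_X = 2H$.

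If $Y$ is a curve with normalization $\widetilde{Y}$ of genus $g$ and $d = H\cdot Y$, then $\deg(K_{\widetilde Y} + t H) = 2g-2+td$. This gives $a(Y,H) = 2/d \leqslant 2$ when $g=0$, and $a(Y,H)\leqslant 0$ when $g\geqslant 1$.

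If $Y$ is a surface with $a(Y,H) > 2$, then Theorem \ref{a-inv-cor}(1) with $n=2$ forces $a(Y,H)=3$. Moreover, $(\widetilde Y,\phi^*H)$ is then birational to $(\BP^2,\CO(1))$, so $(\phi^*H)^2 = 1$, i.e. $H^2\cdot Y = 1$. Writing $Y \equiv kH$ gives $kH^3 = 1$, so $H^3 = k = 1$. In that case $Y\in |H|$ is Gorenstein with $K_Y = -H|_Y$ by adjunction. We would rule it out by comparing sections: $h^0(Y,H|_Y) = h^0(X,H)-1 = 2$, whereas the birational model $(\BP^2,\CO(1))$ would give at least $3$ sections, because $H|_Y$ is Cartier and its pullback to the resolution is $\CO(1)$ plus exceptional curves. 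For $H^3\in\{4,5\}$, the cases needed in this paper, the degree count alone already finishes.

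\emph{$a$-covers.} Let $f\colon Y\to X$ be an $a$-cover with $Y$ smooth, after resolving. Then $K_Y + 2f^*H = K_Y - f^*K_X = R$ is the ramification divisor, which is effective. The condition $a(Y,f^*H)=2$ means $R$ lies on the boundary of the pseudo-effective cone, so it is not big. We would run a $(K_Y+2f^*H)$-MMP, or take the Iitaka fibration of $R$, to get a rational map $Y\dashrightarrow B$ with $\dim B = \kappa(R) \leqslant 2$. Its general fibre $F$ is adjoint rigid with $a(F,f^*H)=2$, and we split according to $\dim F$.

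\begin{enumerate}[(i)]
\item $\dim F = 1$: $F$ is rational with $a = 2/(f^*H\cdot F) = 2$, so $f(F)$ is an $H$-line. This gives a rational map $B \dashrightarrow$ (Hilbert scheme of lines), and $Y$ factors rationally through $\CU$.
\item $\dim F = 2$: Theorem \ref{a-inv-cor}(2) makes $(F,f^*H)$ birational to a quadric with its hyperplane class, or, in the non-rigid scroll case (3), to a $\BP^1$-bundle with lines in the fibres. In both cases $F$ is swept out by curves of $f^*H$-degree $1$ (the rulings), which map to lines. Choosing one ruling over $B$, after a base change if necessary, again produces a factorization through $\CU$.
\item $\dim F = 3$: $Y$ itself is adjoint rigid with $\kappa(R)=0$. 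This is the main obstacle. We would argue by contradiction: the branch divisor $f_*R \equiv mH$ has $m>0$ since $f$ is not birational and $X$ is simply connected. One then needs to show that a rigid $R$ cannot occur. We would first try to show that, after the MMP, $Y$ becomes a terminal Fano-type variety with $-K_{Y'} \equiv 2f^*H$, and then use that its anticanonical index is divisible by $2$ together with Fujita's classification of del Pezzo varieties to contradict non-birationality. As a fallback, we would restrict to a general member of $|f^*H|$ and reduce to the surface case (2) by adjunction.
\end{enumerate}
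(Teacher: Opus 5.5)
Your treatment of the first bullet and of the cases $\kappa(R)=2$ and $\kappa(R)=1$ is essentially sound. Case (iii), $\kappa(K_Y-f^*K_X)=0$, is not proved, and it is the real content of the second bullet. The paper does not reprove the theorem; it cites it. But it isolates exactly this input at the start of Section~\ref{section: exceptional}: there is no $a$-cover with $\kappa(K_Y-f^*K_X)=0$, a separate lemma of Lehmann--Tanimoto. Neither of your strategies closes this case.

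\emph{The first strategy.} Nothing guarantees that the steps of the $(K_Y+2f^*H)$-MMP are $f^*H$-trivial. So you do not know that $f$ descends to $Y'$, that the pushforward $L'$ of $f^*H$ is nef, or that $L'^3=\deg(f)\,H^3$. Without this, Fujita's classification of $(Y',L')$ gives no bound on $\deg f$. You also do not say which feature of that list would contradict $\deg f\geqslant 2$.

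\emph{The fallback.} It is misdirected. For a general $T\in|f^*H|$, adjunction gives $K_T+f^*H|_T=R|_T\geqslant 0$, so $a(T,f^*H|_T)\leqslant 1$. Thus $f|_T$ is a cover of the del Pezzo surface $S\in|H|$ with $a=1$, not an instance of your case (ii), which is about $a=2$. You would moreover need $\kappa(R|_T)=0$, which does not follow from $\kappa(R)=0$. A rigid divisor can restrict to a movable one: for the exceptional divisor of $\mathrm{Bl}_\ell\BP^3$ and a general strict transform of a quadric through $\ell$, the restriction is a ruling.

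\emph{What is needed.} You must show that an adjoint rigid $a$-cover is birational to a finite cover $g\colon Z\to X$ with no ramification in codimension one. Purity of the branch locus then makes $g$ \'etale, and $\pi_1(X)=1$ forces $\deg f=1$. In dimension two this is easy: a $(K_T+f^*H)$-negative extremal ray is an $f$-contracted $(-1)$-curve, so $f$ descends through the MMP and the ramification divisor becomes numerically trivial, hence zero. In dimension three, controlling the extremal rays in this way is precisely the work still missing.

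There are also smaller issues.
\begin{itemize}
\item In case (ii), choosing a ruling after a base change of $B$ gives a factorization for a cover of $Y$, not for $Y$ itself. You must either rule out monodromy exchanging the two rulings or exclude the case. For $H^3\geqslant 3$ the exclusion is immediate: $(f^*H|_F)^2=2$ must equal $\deg(f|_F)\,k\,H^3$ with $f(F)\in|kH|$, the same count as in Proposition~\ref{a-inv-prop}.
\item Case (3) of Theorem~\ref{a-inv-cor} cannot occur for $F$, since $F$ is adjoint rigid by construction.
\item In the first bullet with $H^3=1$, your section count runs the wrong way. The injection $H^0(Y,H|_Y)\hookrightarrow H^0(\widetilde Y,\phi^*H|_Y)$ only gives $2\leqslant 3$. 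A contradiction needs $\phi_*\CO_{\widetilde Y}=\CO_Y$, i.e.\ normality of $Y\in|H|$ (for instance, isolated singularities), which you have not justified.
\item Your claim $(\phi^*H)^2=1$ is correct, but the reason is that the $(K+(3-\epsilon)\phi^*H)$-MMP contracts only $\phi^*H$-trivial $(-1)$-curves. Hence $\phi^*H|_Y$ is exactly the pullback of $\CO_{\BP^2}(1)$, with no exceptional part.
\end{itemize}
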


We also need to classify surfaces $S\subset X$ such that $(S,H|_{S})$ is adjoint rigid and $a(S,H|_{S})> 1$. 

\begin{prop}[\cite{Fen26} cf. Proposition 3.4]\label{a-inv-prop}
    Let $X$ be a del Pezzo threefold with $\Pic(X) = \BZ H$. Suppose $(S,H|_S)$ is an adjoint rigid surface in $X$ such that $a(S,H|_S)>1$. Then $a(S,H|_{S}) = 3/2$ or $2$, and we have the following: 
    \begin{enumerate}
        \item If $a(S,H|_S) = 3/2$, then $H^{3}\leqslant 2$ and $S$ is the image of $\BP^2$ and we have the following.
            \begin{itemize}
    
                \item If $H^3 = 2$, then $S\in |2H|$.
    
                \item If $H^3 = 1$, then $S\in |4H|$.
            \end{itemize}
    
        \item If $a(S,H|_S) = 2$, then $H^3=1$ and $S\in |2H|$, which is the image of a quadric surface $Q$.
    \end{enumerate}
\end{prop}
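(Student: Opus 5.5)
The plan is to pass to a resolution and combine the classification of Theorem \ref{a-inv-cor} (with $n=2$) with the bound of Theorem \ref{largea}, then pin down the degree of $S$ numerically. Let $\phi\colon \widetilde S\to S$ be a resolution and set $L=\phi^*H|_S$, a nef and big divisor with $a(\widetilde S,L)=a(S,H|_S)>1$. Since $\Pic X=\BZ H$ and $S$ is an integral divisor, $S\in|kH|$ for some $k\geqslant 1$. Because $\phi$ is birational onto $S$, we get $L^2=H^2\cdot S=kH^3$.

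The first step is to cut down the possible values of $a$. By Theorem \ref{largea}, $a(S,H|_S)\leqslant 2$, so case (1) of Theorem \ref{a-inv-cor} is excluded. Case (3) is excluded by adjoint rigidity. This leaves two possibilities:
\begin{itemize}
    \item $a=2$, with $(\widetilde S,L)$ birational to a (possibly singular) quadric surface $(Q,\CO_Q(1))$;
    \item $1<a<2$, in which case $a=3/2$ and $(\widetilde S,L)$ is birational to $(\BP^2,\CO(2))$.
\end{itemize}

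The second step, which I expect to be the main obstacle, is to show that $L^2$ equals the degree on the model, namely $2$ for the quadric and $4$ for $(\BP^2,\CO(2))$. The birational equivalence of pairs in Theorem \ref{a-inv-cor} a priori only controls $L$ up to exceptional corrections. Replace $\widetilde S$ by a common resolution $\psi\colon\widetilde S\to Y$, where $Y$ is the model ($\BP^2$ or a resolution of $Q$). Then
\[
L=\psi^*L_Y-\textstyle\sum a_iE_i, \qquad a_i\geqslant 0,
\]
using that $L$ is nef and $\psi_*L=L_Y$. Writing $K_{\widetilde S}=\psi^*K_Y+\sum c_iE_i$ with $c_i\geqslant 1$ (after adjusting for the singular quadric), we have
\[
K_{\widetilde S}+aL=\psi^*(K_Y+aL_Y)+\textstyle\sum (c_i-a\,a_i)E_i .
\]
Here $K_Y+aL_Y$ is rigid, i.e.\ $\kappa=0$ and numerically trivial on the model. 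If some $a_i>0$, I would argue that either the $a$-invariant strictly increases, contradicting Theorem \ref{largea}, or the Iitaka dimension changes. Hence the $a_i$ vanish and $L^2=L_Y^2$. I would compare directly with the effective cone of the blow-up to make this precise, and I expect this to need the most care, especially for singular $Q$.

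The last step is the numerics. If $a=2$, then $kH^3=2$, so either $(H^3,k)=(2,1)$ or $(1,2)$. If $a=3/2$, then $kH^3=4$, so $(H^3,k)\in\{(4,1),(2,2),(1,4)\}$. Whenever $k=1$, $S$ is a hyperplane section of $X$. By adjunction $K_S=(K_X+H)|_S=-H|_S$, so $a(S,H|_S)=1$, which is a contradiction. This removes $(2,1)$ and $(4,1)$. The surviving cases are exactly:
\begin{itemize}
    \item $a=3/2$ with $H^3=2$, $S\in|2H|$, or $H^3=1$, $S\in|4H|$, and $S$ the image of $\BP^2$;
    \item $a=2$ with $H^3=1$, $S\in|2H|$, and $S$ the image of a quadric surface.
\end{itemize}
This is the claimed statement.
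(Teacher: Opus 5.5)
Your skeleton (classification, then $L^2=kH^3$, then numerics) matches the paper's, but there is a genuine gap: the step excluding $k=1$ is wrong as stated. Adjunction gives $\omega_S\cong\CO_S(-H)$ on the possibly singular surface $S$. However, $a(S,H|_S)$ is computed on a resolution $\phi\colon\widetilde S\to S$, and there $K_{\widetilde S}$ differs from $\phi^*K_S$ as soon as $S$ is non-normal or has non-canonical singularities: the conductor and negative discrepancies enter. Then $K_{\widetilde S}+tL$ can fail to be pseudo-effective for $t$ slightly above $1$.

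This really happens. Let $X$ be the double cover of $\BP^3$ branched along the Fermat quartic, and let $S\in|H|$ be the preimage of the plane $w=\zeta z$ with $\zeta^4=-1$. The branch curve is $x^4+y^4=0$, four concurrent lines, so $S$ is a cone over an elliptic curve and $a(S,H|_S)=2$. Your argument would therefore "prove" something false. Ruling out $(H^3,k)\in\{(4,1),(2,1)\}$ needs real information about $S$.

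The paper gets this information from the fact that the normalization $S'$ of $S$ is $\BP^2$, which is an output of its MMP argument. For $(4,1)$: a normal $S$ would be $\BP^2$, but a smooth member of $|H|$ is a quartic del Pezzo surface. A non-normal $S$ contradicts the isolatedness of singularities of members of $|H|$ \cite[Lemma 3.1]{Lehmann2021a}. For $(2,1)$ the paper invokes \cite[Lemma 4.3]{Okamura2025Gt}.

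Your second step is also left open, and the mechanism you suggest is not the right one. If $K_{\widetilde S}+aL$ is pseudo-effective, then it is numerically an effective exceptional divisor $\sum(c_i-a\,a_i)E_i$, so the Iitaka dimension cannot change. A jump of $a$ above $3/2$ would not contradict Theorem \ref{largea}; the contradiction is with $a(\widetilde S,L)=a$ itself.

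What actually forces the $a_i$ to vanish is integrality. The $a_i$ are integers, and the strict transform $F$ of the first exceptional curve over each blown-up point has $c_F=1$. Pseudo-effectivity then gives $a_F\leqslant 1/a<1$, i.e.\ $a_F=0$. Next, $0\leqslant L\cdot F=-\sum_{G\neq F}a_G\,(G\cdot F)$ kills every $a_G$ with $G$ meeting $F$. Connectedness of each exceptional fibre then gives $a_i=0$ everywhere. This still presupposes $\psi_*L=L_Y$, which depends on how ``birational equivalence of pairs'' is read.

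The paper sidesteps all of this by running the $(K_W+a\phi^*L)$-MMP on a resolution $W$ of $S'$. Each contracted $(-1)$-curve $C$ has $\phi^*L\cdot C<1/a<1$, hence $\phi^*L\cdot C=0$, so $\phi$ factors through every step. One lands on a smooth weak del Pezzo surface without $(-1)$-curves that maps onto $S'$.

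Either way, the output $L=\psi^*L_Y$ yields a finite birational morphism from $\BP^2$ (resp.\ the quadric $Q$) onto $S$. This is what justifies ``$S$ is the image of $\BP^2$'', and it is exactly the input needed to repair the $k=1$ step.
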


\begin{proof} 
Suppose $(S,H|_S)$ is an adjoint rigid surface satisfying $a(S, H|_S) > 1$. Then Theorem \ref{a-inv-cor} implies that $S$ is birational to either $\BP^2$ or a quadric surface $Q$. Let $Y$ denote either $\BP^2$ or $Q$. We have a rational map $f:Y \dashrightarrow S$. 

Let $\nu: S'\to S$ be the normalization and $\psi: Y \dashrightarrow S'$ be a rational map with $f = \nu\circ \psi$. Write $L = \nu^*H|_S$. Denote by $\phi: W \to S'$ a resolution of the indeterminacy of $\psi$. By the proof of \cite[Proposition 3.4]{Fen26}, the first step of running the $(K_W + a(S', L)\phi^*L)$-MMP gives a morphism of smooth surfaces $\phi': W \to W'$ such that $\phi$ factors through $\phi'$. In particular, we may replace $W$ by $W'$ to start with. By repeating this procedure, we obtain a morphism $\widetilde{W} \to Y$, where $\widetilde{W}$ is a minimal model which is a smooth weak del Pezzo surface containing no $(-1)$-curves. We divide the discussion based on the possibility of $Y$:

\begin{enumerate}[(i)]
    \item $Y = \BP^2$ and $a(S,H|_S) = 3/2$: 
    
    Since there is no $(-1)$-curve on $\widetilde{W}$, we have $\widetilde{W} = \BP^2$ and $L = \CO_{\BP^2}(2)$. As the only possible morphism from $\BP^2$ to a normal surface is the identity map, we have $\BP^2 = S'$ and $f$ is a morphism. In particular, $f$ is the normalization map and is thus finite of degree one. Let $n$ be an integer such that $S\in |nH|$. Then we have 
    \[
        4 = L^2 = H|_S^2 = n\cdot H^3.
    \]
    Since both sides of the equation are integers, we must have $H^3\notin \{3,5\}$.

    When $H^{3}=4$, $X$ is a complete intersection of two quadrics in $\mathbb{P}^{5}$ and $S\in |H|$. 
    If $S$ is normal, then $S$ is smooth, hence a del Pezzo surface of degree $4$, which is not isomorphic to $\mathbb{P}^{2}$. 
    If $S$ is non-normal, then it contradicts the fact that any member of $|H|$ has only isolated singularities (cf. \cite[Lemma 3.1]{Lehmann2021a}). 
    Thus $H^{3}\ne 4$. 

    \item $Y = Q$ and $a(S,H|_S) = 2$:
    
    If $Q$ is a smooth quadric surface, we have $\widetilde{W} = Q$. If $Q$ is singular, then $\phi^*L \cdot C = 0$, where $C$ is the $(-2)$-curve obtained from the blowup of the cone point. Hence we can further contract $C$ to obtain a morphism from $\widetilde{W}$ to $Q$. Same degree computation gives
    \[
        2 = L^2 = H|_S^2 = n \cdot H^3.
    \]
    Again, since both sides of the equation are integers, we must have $H^3\notin \{3,4,5\}$. If $H^3 = 2$, then $S\in |H|$, but this contradicts  \cite[Lemma 4.3]{Okamura2025Gt}.
\end{enumerate}
\end{proof}

\begin{theorem}
    \label{Classifya-inv} Let $X$ be a del Pezzo threefold with $\Pic X = \BZ H$ and $H^3\geqslant 3$. If $S$ is a surface contained in $X$, then 
    \begin{enumerate}
        \item either $a(S, H|_S) \leqslant 1$, or
        \item $a(S, H|_S) = 2$, and a resolution $\tilde{S}$ of $S$ factors through the universal family of lines on $X$, i.e. there exists a morphism $g: \tilde{S}\to \CU$, where $\CU$ is the universal family of lines on $X$.
    \end{enumerate}
\end{theorem}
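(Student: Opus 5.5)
Let $S\subset X$ be a surface with $a(S,H|_S)>1$, and let $\phi\colon\tilde S\to S$ be a resolution, so that $a(S,H|_S)=a(\tilde S,\phi^*H)$. The plan splits into two cases according to whether the pair $(\tilde S,\phi^*H)$ is adjoint rigid.

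\emph{Adjoint rigid case.} Here Proposition \ref{a-inv-prop} applies directly. It forces $a=3/2$ with $H^3\leqslant 2$, or $a=2$ with $H^3=1$. Both are excluded since $H^3\geqslant 3$, so this case cannot occur.

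\emph{Non-adjoint rigid case.} First, Theorem \ref{largea} gives $a(S,H|_S)\leqslant a(X,H)=2=n$, where $n=\dim S=2$. By Theorem \ref{a-inv-cor}, the value $a\in(1,2)$ occurs only in case (4), which is adjoint rigid, so it is excluded here. Hence $a=2$, and case (3) of Theorem \ref{a-inv-cor} applies. Thus, up to birational equivalence, the canonical map $\pi\colon\tilde S\to B$ realizes $\tilde S$ as a $\BP^1$-bundle over a curve $B$, with $\phi^*H$ restricting to $\CO(1)$ on fibres. So a general fibre $F$ of $\pi$ satisfies $\phi^*H\cdot F=1$, and its image $\phi(F)$ is an $H$-line in $X$.

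This gives a family of lines on $X$ parametrized by $B$. By the universal property of the Hilbert scheme, we obtain a rational map $B\dashrightarrow V$ to the Fano surface of lines. Since $B$ is a smooth curve, this extends to a morphism $B\to V$. The pullback of the universal family $\CU\to V$ along $B\to V$ is a $\BP^1$-bundle over $B$ that is birational to $\tilde S$ over $B$: both are $\BP^1$-bundles over $B$ whose general fibres map isomorphically onto the same line. This yields a rational map $\tilde S\dashrightarrow\CU$ compatible with evaluation to $X$.

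To upgrade this to a morphism $g\colon\tilde S\to\CU$, I would replace $\tilde S$ by a further blow-up resolving the indeterminacy; the statement is about \emph{a} resolution, so this is allowed. Alternatively, I would note that $\tilde S\to B\to V$ is a morphism, and that the point of the line $\ell_{\pi(s)}$ is $\phi(s)$, which gives the map to $\CU\subset V\times X$ defined on the open set where $\phi(s)\in\ell_{\pi(s)}$, a closed condition satisfied generically and hence everywhere.

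The main obstacle is making precise that the general fibres of the canonical map are sent to lines with degree one, rather than, say, being contracted. This follows because $\phi^*H$ is big and nef with $\phi^*H\cdot F=1$ on the model from Theorem \ref{a-inv-cor}, and $\phi$ is birational onto $S\subset X$. A secondary point is that birational equivalence in Theorem \ref{a-inv-cor} is of pairs. So I should check that the $a$-invariant and the fibration structure transfer to the given resolution, where the fibration is the Iitaka fibration of $K_{\tilde S}+2\phi^*H$; this is birationally invariant.
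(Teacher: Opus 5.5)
Your proposal is correct and follows essentially the same route as the paper. You rule out the adjoint rigid case via Proposition \ref{a-inv-prop}, land in case (3) of Theorem \ref{a-inv-cor} with $a=2$, and use that the fibres of the resulting $\BP^1$-fibration are $H$-lines, which gives the factorization through $\CU$. The one minor difference is how $H\cdot F=1$ is obtained: the paper derives it from $F\cdot(K_S+2H|_S)=0$, $F^2=0$ and adjunction rather than from the $\CO_\pi(1)$ clause, and this settles the birational-equivalence concern you flag. Meanwhile, you are more explicit than the paper about why $a=2$ and how the morphism $g$ is constructed.
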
 

\begin{proof}
    Suppose $a(S, H|_S) > 1$. Then by Proposition \ref{a-inv-prop}, there is no surface $S \subset X$ such that $(S, H|_S)$ is adjoint rigid with $a(S, H|_S) > 1$, hence we may assume that $(S, H|_S)$ satisfies Theorem \ref{a-inv-cor} (c). Similar argument as Theorem \ref{a-inv-prop} shows that we may replace $S$ by its minimal model and regard it as a $\BP^1$-bundle $\pi: S\to C$. Let $F$ denote the fibre of $\pi$. We have $F \cdot (K_{S} + 2H|_S) = 0$ and $F^2 = 0$. This implies that $F\cdot H|_S = 1$ and the fibres are lines, i.e. $S$ factors through the universal family of lines. 
\end{proof}

\section{Exceptional components}\label{section: exceptional}

Let $X$ be a del Pezzo threefold with $\Pic(X) = \BZ H$, $H^3\in \{4,5\}$. In this section we classify the exceptional components of $\Mor(E,X)$. 

\begin{definition}[cf. \cite{LT26} Definition 4.10]
    Let $X$ be a smooth Fano variety over $\BC$. Let $Y \to X$ be a thin map from a smooth projective variety $Y$, i.e. $f$ is generically finite to the image and there is no rational section of $f$ from $X$. We say $f$ is an \textit{exceptional map} if it satisfies one of the following conditions:
    \begin{itemize}
        \item $f$ is non-dominant and $a(Y, -f^*K_X) \geqslant a(X, -K_X) = 1$;
        \item $f$ is an $a$-cover with $\kappa(K_{Y}-f^{*}K_{X}) > 0$;
        \item $f$ is an $a$-cover with $\kappa(K_{Y}-f^{*}K_{X}) = 0$ that is non-Galois;
        \item $f$ is an $a$-cover with $\kappa(K_{Y}-f^{*}K_{X}) = 0$ that is Galois and face-contracting in the sense of \cite[Definition 4.26]{Lehmann2022};
    \end{itemize}
    We say that an irreducible component $M\subset \mathrm{Mor}(E,X)$ is an \textit{exceptional component} if there is an exceptional map $f\colon Y\rightarrow X$ and an irreducible component $N\subset \mathrm{Mor}(E,Y)$ such that $f$ induces a dominant map $f_{*}\colon N\rightarrow M$. Otherwise, $M$ is a \textit{Manin component}.
\end{definition}

Since there is no $a$-cover with $\kappa(K_{Y}-f^{*}K_{X})=0$ by \cite[Lemma 7.2]{Lehmann2019}, Theorem \ref{largea} implies that any exceptional component on a smooth del Pezzo threefold of Picard rank $1$ factors through the universal family of lines. 

\subsection{The Set-up}
Let $V$ be the Fano surface of lines on $X$ and $\pi\colon \CU\rightarrow V$ be its universal family with the evaluation map $ev:\CU\rightarrow X$. Since $H$ is very ample, we have an embedding of $V$ into the Grassmannian $\mathbb{G}:= \mathbb{G}(1,H^{3}+1)$ of lines on $\mathbb{P}^{H^{3}+1}$. By the universal property, we have the following diagram:
\begin{equation}\label{Diagram: universal family of lines}
    \begin{tikzcd}
    	{\mathcal{U}} & {\mathcal{U}_{\mathbb{G}}} & {\mathbb{P}^{H^{3}+1}} \\
    	V & {\mathbb{G}}
    	\arrow[hook, from=1-1, to=1-2]
    	\arrow["\pi"', from=1-1, to=2-1]
    	\arrow["\lrcorner"{anchor=center, pos=0.125}, draw=none, from=1-1, to=2-2]
    	\arrow["{ev_{\mathbb{G}}}", from=1-2, to=1-3]
    	\arrow["{\pi_{\mathbb{G}}}", from=1-2, to=2-2]
    	\arrow[hook, from=2-1, to=2-2]
    \end{tikzcd}
\end{equation}
Denote by $\mathcal{S}$ the universal rank $2$ subbundle on $\mathbb{G}$ and $\CE \coloneqq \CS^{\vee}|_V$. We have the isomorphisms:
\[
\CU_{\BG} \cong \BP_{\BG}(\CS^{\vee}) \hspace{3mm} \text{and} \hspace{3mm} \CU\cong \BP_V(\CE).
\]
The evaluation map $ev_{\mathbb{G}}$ (resp. $ev$) is defined by a sublinear system of $|\mathcal{O}_{\mathbb{P}(\mathcal{S}^{\vee})}(1)|$ (resp. $|\mathcal{O}_{\mathbb{P}(\mathcal{E})}(1)|$). 
Set $\xi_{\mathbb{G}} := c_{1}(\mathcal{O}_{\mathbb{P}(\mathcal{S}^{\vee})}(1))$ and $\xi := c_{1}(\mathcal{O}_{\mathbb{P}(\mathcal{E})}(1))$. 
Since $\det\mathcal{S}^{\vee} = \mathcal{O}_{\mathbb{G}}(1)$, we see that $\det\mathcal{E} = \det\mathcal{S}^{\vee}|_{V}$ is the very ample line bundle defining the embedding $V\hookrightarrow \mathbb{G}$. 

We consider the locus of morphisms $(f\colon E\rightarrow X)\in \mathrm{Mor}(E,X)$ which factors through the universal family of lines on $X$, i.e. there is a morphism $g\colon E\rightarrow \mathcal{U}$ which fits in the diagram
\[\begin{tikzcd}
	E & {\mathcal{U}} & X \\
	& V
	\arrow["g", from=1-1, to=1-2]
	\arrow["f", curve={height=-18pt}, from=1-1, to=1-3]
	\arrow["h"', from=1-1, to=2-2]
	\arrow["ev", from=1-2, to=1-3]
	\arrow["\pi"', from=1-2, to=2-2]
\end{tikzcd}\]
where $h:= \pi \circ g$. Depending on the image of $h$, we have two cases to consider: 
\begin{itemize}
    \item $h$ contracts $E$ to a point, i.e. $f$ is a multiple cover of a line on $X$, or
    \item $h(E)$ is a curve on $V$. 
\end{itemize} 

\subsection{Cover of lines}\label{subsec: coveroflines}
For the first case, we have the following: 
\begin{theorem}
    \label{coveroflines} Let $e\geqslant 2$. Let $W_E$ denote the locus in $\Mor(E,X)$ that parametrizes degree $e$ covers of lines. Then $W_E$ is an irreducible non-free component of dimension $2e+2$. Moreover, there is a unique irreducible component $N_e \subset \widetilde{M}_{1,0}(X,e)$ of dimension $2e+2$ parametrizing degree $e$ covers of lines.
\end{theorem}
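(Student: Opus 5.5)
The plan is to describe $W_E$ directly as a fibration over the Fano surface of lines $V$. Then compare its dimension with the Zariski tangent space at a general point. Every map in $W_E$ is a composition $E\to \ell\cong\BP^1\hookrightarrow X$ with $[\ell]\in V$. So $W_E$ is the image of a relative morphism space $\Mor_V(E,\CU)$ whose fibre over $[\ell]$ is $\Mor(E,\ell,e)\cong \Mor(E,\BP^1,e)$.

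\textbf{Irreducibility and dimension of $W_E$.} A degree $e$ map $E\to\BP^1$ is given by a line bundle $L\in \Pic^e(E)$, which contributes $1$ parameter, together with two sections of $L$ without common zeros, taken up to a common scalar. These form an open subset of $\BP(H^0(L)^{\oplus 2})$, of dimension $2e-1$. Since $\Pic^e(E)\cong E$ is irreducible, $\Mor(E,\BP^1,e)$ is irreducible of dimension $2e$. The base $V$ is irreducible of dimension $2$: it is $J(C)$ when $H^3=4$ by Theorem \ref{FanosurfacedP4}, and $\BP^2$ when $H^3=5$ by Theorem \ref{FN89}. The map $\Mor_V(E,\CU)\to W_E$ is injective, because $e\geqslant 2$ and $f(E)$ determines $\ell$. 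Hence $W_E$ is irreducible of dimension $2e+2$.

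\textbf{$W_E$ is a component and is non-free.} I use the standard fact that a general line $\ell$ on $X$ has normal bundle $N_{\ell/X}\cong\CO\oplus\CO$; lines with normal bundle $\CO(1)\oplus\CO(-1)$ form only a curve in $V$. Then $T_X|_\ell\cong \CO(2)\oplus\CO\oplus\CO$, so $f^*T_X\cong L^{2}\oplus\CO_E\oplus\CO_E$ with $\deg L^{2}=2e$. This gives $h^0(E,f^*T_X)=2e+2$. The tangent space to $\Mor(E,X)$ at a general $[f]\in W_E$ therefore has dimension equal to $\dim W_E$. So $\Mor(E,X)$ is smooth of dimension $2e+2$ there, and $W_E$ cannot lie in a strictly larger irreducible component. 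Thus $W_E$ is a component. It exceeds the expected dimension $-K_X\cdot f_*E=2e$. Moreover $h^1(f^*T_X)=2\neq 0$ and $f^*T_X$ has trivial summands, so the general member is not free.

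\textbf{Kontsevich space.} In the same way, the locus of degree $e$ covers of lines in $\widetilde{M}_{1,0}(X,e)$ fibres over $V$ with fibre $\widetilde{M}_{1,0}(\BP^1,e)$. The latter is the closure of the Hurwitz space of simply branched genus one degree $e$ covers. By Riemann–Hurwitz it has $2e$ branch points, so it has dimension $2e$, and it is irreducible by the classical Clebsch–Hurwitz connectedness theorem. Covers with non-simple branching lie in its closure. This gives an irreducible locus $N_e$ of dimension $2e+2$. At a general point, $N_f$ is an extension of $f^*N_{\ell/X}\cong\CO_E^{2}$ by the normal sheaf of $E\to\ell$, which has $h^0=2e$. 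Hence $h^0(N_f)=2e+2$, and $N_e$ is a generically smooth component. Any component generically parametrizing covers of lines is contained in this irreducible locus, so it equals $N_e$; this gives uniqueness.

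The main obstacle I expect is the irreducibility of the genus one Hurwitz locus inside $\overline{M}_{1,0}(\BP^1,e)$, together with checking that the general line has balanced normal bundle on these specific $X$. For the first I would quote Hurwitz's monodromy argument. For the second I would use that the tangent space to $V$ at $[\ell]$ is $H^0(N_{\ell/X})$ and that $V$ is smooth of dimension $2$ with the jumping lines forming a proper closed subset.
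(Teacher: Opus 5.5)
Your argument is correct and follows essentially the same route as the paper. The paper's proof just imports the cubic-threefold computation of \cite{Fen26}: it fibres the locus of covers of lines over the two-dimensional Fano surface $V$, with fibre $\Mor(E,\BP^1,e)$ (resp.\ the genus one Hurwitz locus), gets dimension $2+2e$, and matches this with $h^0(E,f^*T_X)=2e+2$ (resp.\ $h^0(E,N_f)=2e+2$) at a cover of a general line.

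One step needs repair: the justification you sketch for $N_{\ell/X}\cong\CO\oplus\CO$ on a general line does not work.
\begin{itemize}
\item Smoothness of $V$ with $T_{[\ell]}V=H^0(\ell,N_{\ell/X})$ cannot detect jumping lines. The bundle $\CO(1)\oplus\CO(-1)$ also has $h^0=2$ and $h^1=0$, so $V$ is smooth of dimension $2$ at those points too.
\item The correct reason is that lines sweep out $X$: the evaluation map $\CU\to X$ is surjective and generically finite, with $4$ (resp.\ $3$) lines through a general point.
\item Hence, by generic smoothness of $ev$ (equivalently, a general member of a dominant family of rational curves is free, cf.\ \cite{Kollar1996}), $N_{\ell/X}$ is generically globally generated for general $\ell$. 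Since $\deg N_{\ell/X}=0$, this forces $N_{\ell/X}\cong\CO\oplus\CO$.
\end{itemize}
This input is really needed. Over a line of type $\CO(1)\oplus\CO(-1)$ the extension still splits, so $f^*T_X\cong\phi^*\CO(2)\oplus\phi^*\CO(1)\oplus\phi^*\CO(-1)$ has $h^0=3e$, which exceeds $2e+2$ once $e\geqslant 3$. The tangent-space comparison therefore only works at covers of a general line.

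Two minor remarks:
\begin{itemize}
\item Passing from irreducible base and irreducible fibres to an irreducible total space uses that $\CU=\BP_V(\CE)$ is Zariski-locally trivial.
\item You do not need Clebsch--Hurwitz for $\widetilde{M}_{1,0}(\BP^1,e)$. The smooth-domain locus is irreducible by your own description of $\Mor(E,\BP^1,e)$ varying over $M_{1,1}$. Maps from nodal rational domains form a locus of dimension $2e-1$, below the expected dimension $2e$, so they cannot form a component.
\end{itemize}
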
 

\begin{proof}
    This follows directly from the proof of \cite[Theorem 4.1]{Fen26} and \cite[Theorem 4.2]{Fen26}.
\end{proof}

\subsection{Non-cover of lines}\label{subsec: noncoveroflines}

We next consider the second case where $h(E)$ is not contracted to a point. 
We will show that such morphisms cannot form an irreducible component of $\mathrm{Mor}(E,X)$. We first prove the following propositions which allow us to bound the expected dimension of $\Mor(E,\CU)$.

\begin{prop} \label{quartic Theta divisor}
    Let $X$ be a quartic del Pezzo threefold. Then $\det \CE$ is algebraically equivalent to $4\Theta$, where $\Theta$ is the Theta divisor on $J(C)$.
\end{prop}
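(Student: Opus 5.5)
The plan is to realise $\det\CE$ as an explicit divisor on $V\cong J(C)$ (Theorem \ref{FanosurfacedP4}) and degenerate it to a sum of four curves, each algebraically equivalent to $\Theta$. Since $\det\CE=\det\CS^{\vee}|_V$ is the Plücker line bundle, it is the restriction of the Schubert class $\sigma_1$, which parametrises lines in $\BP^5$ meeting a fixed general codimension-two linear subspace $\Lambda\cong\BP^3$. Lines on $X$ meet $\Lambda$ exactly when they meet the curve $\Gamma:=X\cap\Lambda$, an elliptic normal quartic. So $\det\CE$ is linearly equivalent to the divisor
\[
D_\Gamma:=\pi_*ev^*\Gamma\subset V,
\]
the locus of lines meeting $\Gamma$. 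For general $\Lambda$ this intersection is transverse, so the multiplicity is one.

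Next I use that $Z\mapsto \pi_*ev^*Z$ is a correspondence. It sends algebraically equivalent $1$-cycles on $X$ to algebraically equivalent divisors on $V$: apply it to a flat family of cycles over a connected base. I then degenerate $\Gamma$ inside a smooth hyperplane section $S\in|H|$, which is a del Pezzo surface of degree $4$. Write $S$ as the blowup of $\BP^2$ at five points, with hyperplane class $h$ and exceptional curves $E_1,\dots,E_5$. On $S$,
\[
H|_S=-K_S=3h-\textstyle\sum E_i=(2h-\textstyle\sum_{i=1}^5 E_i)+(h-E_1-E_2)+E_1+E_2,
\]
and each summand is a line of $X$. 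Hence $\Gamma$ is linearly equivalent on $S$, and therefore algebraically equivalent on $X$, to $\ell_1+\ell_2+\ell_3+\ell_4$ with $\ell_i$ lines. This gives $\det\CE\equiv_{\rm alg}\sum_i D_{\ell_i}$, where $D_\ell:=\pi_*ev^*\ell$. For the component of $ev^{-1}(\ell)$ that is the fibre of $\pi$ over $[\ell]$, the pushforward vanishes because $\pi$ contracts it. So $D_\ell$ is just the curve of lines $\ell'\neq\ell$ meeting $\ell$, counted with its natural multiplicity.

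The remaining, and main, step is to show that each $D_\ell$ is a translate of $\Theta$ with multiplicity one. Since all lines are algebraically equivalent, their $D_\ell$ are algebraically equivalent, so it suffices to do this for one general line. I would use Reid's description \cite{Reid1972}: the lines meeting a fixed line $\ell$ are parametrised by a curve isomorphic to $C$, embedded in $J(C)$ by an Abel--Jacobi type map, i.e.\ as a translate of $\Theta$. For the multiplicity, I would check that a general line $\ell'$ meeting $\ell$ does so transversally at one point and that $D_\ell$ is reduced. A fallback is the numerical check $(\det\CE)^2=\deg V=32$ in the Plücker embedding against $(4\Theta)^2=4^2\cdot 2=32$. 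This only pins the multiple down if one already knows $\det\CE\equiv k\Theta$, but it confirms that no multiplicity has been lost. Combining the two steps gives $\det\CE\equiv_{\rm alg}4\Theta$.
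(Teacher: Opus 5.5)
Your proposal is correct and follows essentially the same route as the paper. Both identify $\det\CE$ with the restriction of the Schubert divisor $\sigma_1$, i.e.\ the locus of lines meeting the quartic curve $X\cap\Lambda$, reduce to the curve of lines meeting a single line, and use Reid's Abel--Jacobi description to see that this curve is a translate of $\Theta$. Two of your steps make precise what the paper only asserts: the explicit degeneration of $\Gamma$ to four lines inside a smooth quartic del Pezzo surface, and the check $(\det\CE)^2=32=(4\Theta)^2$ on multiplicities.
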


\begin{proof}
    First, we show that $\det \CE$ is represented by a divisor $\pi\circ ev^{-1}(X\cap P)$, where $P\subset \BP^5$ is a general $3$-plane. Indeed, since $\det \CS^\vee = \CO_{\BG}(1)$, diagram \ref{Diagram: universal family of lines} shows that $\det \CE = \CO_{\BG}(1)|_V$. On the other side, $|\CO_{\BG}(1)|$ is the linear series of Schubert cycles $\sigma_P \coloneqq \{l\in \BG\ |\ l\cap P\neq \emptyset\}$. Hence by restriction, $\det \CE$ is represented by the divisor $\pi\circ ev^{-1}(X\cap P)$.

    Next, we show that $\pi\circ ev^{-1}(X\cap P)$ is algebraically equivalent to $4\Theta$. Since $X\cap P$ is a quartic curve, it suffices to show that $\pi\circ ev^{-1}(s)$ is algebraically equivalent to $\Theta$ for a line $s\subset X$. Fix such a line $s$ and denote by $C_s\subset V$ the closure of the set
    \[
    C_s' = \{t\in V\ |\ \dim s\cap t = 0\}.
    \]
    We have $\pi\circ s^{-1}(s) = C_s$. By \cite[Section 4]{Reid1972}, there is an isomorphism $\alpha: V\to J(C)$ factoring through a birational map $\beta: V \dashrightarrow C_s^{(2)}$, where $C_s^{(2)}$ is the symmetric product. The map $\beta$ sends a line $t$ not intersecting $s$ to the point $(s_1,s_2)\in C_s^{(2)}$, where $s_1$ and $s_2$ are residual lines of $X\cap \text{Span}\langle s,t\rangle$. In particular, the locus of indeterminacy of $\beta$ is a finite set of points contained in $C_s$. Denote by $C_s^\circ\subset C_s$ the locus where $\beta$ is defined. Let 
    \[
    u_d: C_s^{(d)} \to J(C)
    \]
    be defined by $u_d(D) = D - ds$. Since the isomorphism $V \cong J(C)$ factors as 
    \[
    \alpha: V\overset{\beta}{\dashrightarrow} C_s^{(2)} \overset{u_2}{\longrightarrow} J(C),
    \]
    we have $\overline{\beta(C_s^\circ)} = \{s\}\times C_s$. On the other hand, the Abel-Jacobi map 
    \[
    u_1: C_s \to J(C)
    \]
    sending $D$ to $D-s$ factors as 
    \[
    C_s \to C_s^{(2)} \overset{u_2}{\longrightarrow} J(C),
    \]
    where the map is given by
    \[
    D \mapsto (D,s) \mapsto D+s-2s = D-s.
    \]
    Since the image $u_1(C_s)$ is a translate of the Theta divisor, and translation preserves algebraic equivalence, the image $\overline{\alpha(C_s^\circ)} \subset J(C)$ is numerically equivalent to $\Theta$.
\end{proof}

When $X$ is a quintic del Pezzo threefold, Theorem \ref{FN89} and Lemma \ref{FN90Lemma2.2} imply that $\mathcal{E} \cong \mathcal{E}_{0}(1) \cong \tau_{*}\mathcal{O}_{Q}(4F_{1})\otimes \mathcal{O}_{\mathbb{P}^{2}}(1)$ and $\det\mathcal{E} \cong \mathcal{O}_{\mathbb{P}^{2}}(5)$. 

\begin{prop} \label{expected dimension on U}
    Let $X$ be a del Pezzo threefold with $\Pic(X) = \BZ H$ and $H^{3}\in \{4,5\}$. Let $g: E\to \CU$ be a morphism such that $h:E\to V$ is not constant. Then
    \[
    K_{\mathcal{U}}\cdot g_{*}E > -2ev^*H\cdot g_*E + 3.
    \]
\end{prop}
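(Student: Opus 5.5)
The plan is to compute $K_{\CU}$ explicitly from the projective bundle structure $\CU \cong \BP_V(\CE)$ and reduce the inequality to an intersection number on the surface $V$. The canonical bundle formula for a $\BP^1$-bundle gives
\[
K_{\CU} = -2\xi + \pi^*(K_V + \det \CE).
\]
Since $ev$ is defined by a sublinear system of $|\CO_{\BP(\CE)}(1)|$, we have $ev^*H = \xi$. Using $h = \pi\circ g$ and the projection formula, this gives
\[
K_{\CU}\cdot g_*E + 2\,ev^*H\cdot g_*E = (K_V + \det\CE)\cdot h_*E .
\]
The statement is therefore equivalent to $(K_V+\det\CE)\cdot h_*E \geqslant 4$. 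Here $h_*E$ is a nonzero effective curve class on $V$, because $h$ is not constant.

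\emph{Quartic case.} By Theorem \ref{FanosurfacedP4}, $V\cong J(C)$ is an abelian surface, so $K_V = 0$. By Proposition \ref{quartic Theta divisor}, $\det\CE$ is algebraically, hence numerically, equivalent to $4\Theta$. Since $\Theta$ is ample and $h_*E$ is a nonzero integral effective class, $\Theta\cdot h_*E\geqslant 1$. Hence $(K_V+\det\CE)\cdot h_*E = 4\,\Theta\cdot h_*E \geqslant 4$.

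\emph{Quintic case.} By Theorem \ref{FN89}, $V\cong\BP^2$, and by the remark preceding the proposition, $\det\CE\cong\CO_{\BP^2}(5)$. Thus $K_V+\det\CE = \CO_{\BP^2}(2)$ and $(K_V+\det\CE)\cdot h_*E = 2\deg(h_*E)$, so it suffices to show $\deg h_*E\geqslant 2$. Write $h_*E = (\deg h)\cdot[h(E)]$, where $\deg h$ is the degree of $E$ onto its image. If $\deg h_*E = 1$, then $h(E)$ is a line and $h\colon E\to h(E)\cong\BP^1$ is birational. A birational morphism from a smooth curve onto $\BP^1$ is an isomorphism, which is impossible since $E$ has genus one. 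Hence $\deg h_*E\geqslant 2$, giving the bound $4$.

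The main step needing care is the identification $ev^*H = \xi$ together with the canonical bundle formula for $\BP_V(\CE)$, including the normalization convention for $\BP(\CE)$. I would check the convention against the relative Euler sequence so that the sign of $\det\CE$ is correct. Once that is fixed, both cases are short intersection-number computations: the quartic case uses Proposition \ref{quartic Theta divisor}, and the quintic case uses the genus obstruction to a birational map onto a line.
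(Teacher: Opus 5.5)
Your proposal is correct and follows the paper's proof: apply the canonical bundle formula for $\BP_V(\CE)$ with $ev^*H=\xi$ to reduce to $(K_V+\det\CE)\cdot h_*E$, which equals $4\Theta\cdot h_*E$ in the quartic case and $\CO_{\BP^2}(2)\cdot h_*E$ in the quintic case. Your observation that $\deg h_*E\geqslant 2$ in the quintic case, because a genus one curve cannot map birationally onto a line, supplies a step the paper leaves implicit in ``since $h$ is non-constant''; without it, a degree-one class would only give $2$, not $>3$.
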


\begin{proof}
    Since $\xi = ev^*H$, we have
    \[
     K_{\mathcal{U}}\cdot g_{*}E = -2ev^*H\cdot g_*E + \pi^{*}(K_{V} + \det\mathcal{E})\cdot g_*E
    \]
    by the canonical bundle formula. By Theorem \ref{FN89}, Lemma \ref{FN90Lemma2.2}, and Proposition \ref{quartic Theta divisor}, the second summand is equal to
    \[
    (K_{V} + \det\mathcal{E})\cdot h_{*}E =
    \begin{cases*}
        4\Theta\cdot h_{*}E, & if $H^{3} = 4$, \\
        \mathcal{O}_{\mathbb{P}^{2}}(2)\cdot h_{*}E,  & if $H^{3}=5$. 
    \end{cases*} 
    \]
    Since $h\colon E\rightarrow V$ is non-constant, we have $(K_{V} + \det\mathcal{E})\cdot h_{*}E > 3$, and the result follows.
\end{proof}

\subsubsection{Dominant family of curves on $\CU$}
Firstly, suppose that the deformations of $g\colon E\rightarrow \mathcal{U}$ sweep out $\mathcal{U}$. 
Then we have the inequality
\[
\dim_{[g]}\Mor(E, \CU) \leqslant -K_{\CU} \cdot g_*E + h^1(E, g^*T_{\CU}).
\]
Since $g^{*}T_{\mathcal{U}}$ is generically globally generated, we have $h^1(E, g^*T_{\CU}) \leqslant 3$. 
Hence by Proposition \ref{expected dimension on U}, we have 
\begin{align*}
\dim_{[g]}\Mor(E, \CU) &\leqslant -K_{\CU} \cdot g_*E + 3 \\
 &< 2ev^{*}H\cdot g_{*}E \\
 &\leqslant \dim_{[f]}\mathrm{Mor}(E, X). 
\end{align*}
Therefore, such a dominant family cannot form an irreducible component of $\mathrm{Mor}(E,X)$. 

\subsubsection{Non-dominant family of curves on $\CU$}

Suppose the family sweeps out a surface $\CY\subset \mathcal{U}$. 
There are two cases to consider:

\begin{enumerate}
    \item $\pi(\CY) = h(E)$ is a curve; and  

    \item $\pi|_{\CY}: \CY \to V$ is dominant.
\end{enumerate}

\noindent \textbf{Case (a).} Suppose that $\pi(\CY)$ is one-dimensional. We consider the graph diagram of $g\colon E\rightarrow \mathcal{U}$ over $V$: 
\[\begin{tikzcd}
	E \\
	& {\mathcal{U}_{h}} & {\mathcal{U}} \\
	& E & V
	\arrow["\sigma"{pos=0.6}, from=1-1, to=2-2]
	\arrow["g", curve={height=-6pt}, from=1-1, to=2-3]
	\arrow["{\mathrm{id}}"', curve={height=6pt}, from=1-1, to=3-2]
	\arrow["{h_{\mathcal{U}}}"{pos=0.4}, from=2-2, to=2-3]
	\arrow["{\pi_{h}}"'{pos=0.4}, from=2-2, to=3-2]
	\arrow["\lrcorner"{anchor=center, pos=0.125}, draw=none, from=2-2, to=3-3]
	\arrow["\pi", from=2-3, to=3-3]
	\arrow["h"', from=3-2, to=3-3]
\end{tikzcd}\]
Then $\mathcal{U}_{h}$ is the projective bundle $\mathbb{P}_{E}(\mathcal{F})$ over $E$, where $\mathcal{F} := h^{*}\mathcal{E}$. Let $C := \sigma(E)$ be the section of $\pi_{h}$. The N\'{e}ron-Severi group is given by 
\[
    NS(\mathcal{U}_{h}) \cong \mathbb{Z}\zeta\oplus \mathbb{Z}F,
\]
where $\zeta := c_{1}(\mathcal{O}_{\mathbb{P}(\mathcal{F})}(1)) = h_{\mathcal{U}}^{*}\xi$ and $F$ is the class of fibres of $\pi_{h}$. 
By the projection formula, we have 
\[
    \zeta\cdot C = \xi\cdot g_{*}E\ \text{  and  }\ \zeta^{2} = \deg_{E} \mathcal{F} = c_{1}(\mathcal{E})\cdot h_{*}E.
\]
Since we also have $\zeta\cdot F = C\cdot F = 1$, we see that $C$ and $\zeta$ are numerically equivalent. Thus $C^{2} = \zeta\cdot C = \xi\cdot g_{*}E>0$. 

Since $\pi(\CY)$ is one-dimensional, the space of deformations of $g \in \mathrm{Mor}(E,\mathcal{U})$ corresponds to the space of deformations of section $C \in \mathrm{Sec}(\mathcal{U}_{h}/E)$. 
Hence we have 
\begin{align*}
    \dim_{[g]}\mathrm{Mor}(E, \mathcal{U}) = \dim_{[C]}\mathrm{Sec}(\mathcal{U}_{h}/E)
     \leqslant h^{0}(N_{C/\mathcal{U}_{h}})
     =C^{2}
     &< 2\xi\cdot g_{*}E
     \leqslant \dim_{[f]}\mathrm{Mor}(E,X). 
\end{align*}

\noindent \textbf{Case (b).} Since $\pi|_{\CY}: \CY \to V$ is dominant and generically finite, it induces a generically finite morphism $\mathrm{Mor}(E,\mathcal{U})\rightarrow \mathrm{Mor}(E,V)$ onto its image.
Hence we have $\dim_{[g]}\mathrm{Mor}(E,\mathcal{U}) \leqslant \dim_{[h]}\mathrm{Mor}(E,V)$.
Now $\dim_{[h]}\mathrm{Mor}(E,V) \leqslant -K_{V}\cdot h_{*}E + h^{1}(g^{*}T_{V})\leqslant -K_{V}\cdot h_{*}E + 2$. Then the projection formula and calculation as in Case (a) give 
\begin{align*}
    2H\cdot f_{*}E - (-K_{V}\cdot h_{*}E + 2) &= 2\xi\cdot g_*E + K_{V}\cdot h_{*}E - 2\\
    &= 2c_{1}(\mathcal{E})\cdot h_{*}E + K_{V}\cdot h_{*}E - 2 > 0. 
\end{align*}
Hence $\dim_{[h]}\mathrm{Mor}(E,V) < \dim_{[f]}\mathrm{Mor}(E,X)$.\\ 

Altogether, we have proven:

\begin{theorem} \label{facunifamlinecor}
    Let $X$ be a del Pezzo threefold with $\Pic(X) = \BZ H$ and $H^{3}\in \{4,5\}$. Let $M\subset \Mor(E,X)$ be an irreducible component parametrizing maps factoring through the universal family of lines. Then $M$ parametrizes covers of lines. \qed
\end{theorem}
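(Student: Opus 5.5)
The statement assembles the case analysis carried out in Subsections \ref{subsec: coveroflines} and \ref{subsec: noncoveroflines}. Let $M\subset \Mor(E,X)$ be an irreducible component whose general member $f$ factors as $f = ev\circ g$ with $g\colon E\to \CU$. Set $h = \pi\circ g$. Since $M$ is irreducible, either $h$ is constant for the general member or it is not. In the first case $f$ is a multiple cover of a line, and Theorem \ref{coveroflines} identifies $M$ with the component $W_E$. So the proof reduces to ruling out the second case.

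Assume $h$ is non-constant for general $[f]\in M$. The maps $g$ lifting members of $M$ form a family in $\Mor(E,\CU)$ dominating $M$ under composition with $ev$. Hence $\dim M\leqslant \dim_{[g]}\Mor(E,\CU)$ for a general lift $g$. On the other hand, every component of $\Mor(E,X)$ has dimension at least the expected dimension $-K_X\cdot f_*E = 2H\cdot f_*E = 2\xi\cdot g_*E$. It therefore suffices to show $\dim_{[g]}\Mor(E,\CU) < 2\xi\cdot g_*E$. I would split according to the locus $\CY$ swept out by deformations of $g$.

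\begin{enumerate}
\item \textbf{$\CY=\CU$.} Here $g^*T_{\CU}$ is generically globally generated, so $h^1 \leqslant 3$ by Atiyah's classification, which bounds the number of trivial summands by the rank. Proposition \ref{expected dimension on U} then gives $\dim\leqslant -K_{\CU}\cdot g_*E + 3 < 2\xi\cdot g_*E$.
\item \textbf{$\CY$ a surface with $\pi(\CY)$ a curve.} Deformations of $g$ correspond to deformations of a section of $\BP_E(h^*\CE)$. The dimension is therefore at most $C^2 = \xi\cdot g_*E < 2\xi\cdot g_*E$.
\item \textbf{$\CY$ a surface dominating $V$.} Deformations of $g$ inject, up to finite ambiguity, into deformations of $h$. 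This gives $\dim\leqslant -K_V\cdot h_*E+2$. The positivity $2c_1(\CE)\cdot h_*E + K_V\cdot h_*E > 2$ then follows from Proposition \ref{quartic Theta divisor} (ampleness of $\Theta$ with $K_V=0$) and from $\det\CE = \CO(5)$ with $K_{\BP^2}=\CO(-3)$.
\end{enumerate}

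The possibility that $\CY$ is a curve would make $f$ have image a curve. Since $h$ is non-constant, this curve is not a line, and the swept locus in $\CU$ could not be a curve unless deformations are just reparametrizations; this is absorbed into the same bound.

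The main obstacle is the bookkeeping in case (c): ensuring the map to $\Mor(E,V)$ is generically finite, and establishing the strict inequality uniformly. For $H^3=5$ the positivity comes from the ampleness of $\CO(2)$ on $\BP^2$, with $\deg h_*E\geqslant 3$ for a genus one image, or with multiple covers handled separately. For $H^3=4$ it comes from $4\Theta\cdot h_*E$ being large, since any curve on an abelian surface has $\Theta$-degree at least $2$. In each case one concludes that $M$ cannot be a component unless $h$ is constant.
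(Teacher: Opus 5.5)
You follow the paper's argument step for step (covers of lines via Theorem~\ref{coveroflines}, then the cases $\CY=\CU$, $\pi(\CY)$ a curve, $\CY\to V$ dominant), and in doing so you inherit a numerical error that is also in the paper. Your ``$C^2=\xi\cdot g_*E$'' in the case $\pi(\CY)$ a curve, and your use of $2c_1(\CE)\cdot h_*E$ in place of $2\xi\cdot g_*E$ in the case $\CY\to V$ dominant, both rest on $C\equiv\zeta$ in $\BP_E(h^*\CE)$. That does not follow from $C\cdot F=\zeta\cdot F=1$, which only fixes the $\zeta$-coefficient. If $C$ corresponds to the quotient $h^*\CE\twoheadrightarrow L=g^*\CO_{\BP(\CE)}(1)$ with kernel $N$, then $N_C\cong L\otimes N^{-1}$. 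Hence $C^2=2\xi\cdot g_*E-c_1(\CE)\cdot h_*E$, and $\xi\cdot g_*E=c_1(\CE)\cdot h_*E$ only when $\deg N=0$.

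In the case $\pi(\CY)$ a curve this is harmless. Since $\det\CE$ is ample, you still get $C^2<2\xi\cdot g_*E$; you should also add the deformations of $h$ along $\pi(\CY)$, which fit in the same margin. In the dominant case with $H^3=4$ you only need $2\xi\cdot g_*E>2$, which is automatic. For $H^3=5$, however, you must show $2\xi\cdot g_*E>3\deg h_*E+2$. In general you only know $\xi\cdot g_*E\geqslant\deg h_*E$, from nefness of $\xi-\pi^*\CO_{\BP^2}(1)=c_1(\CO_{\CE_0}(1))$.

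That step genuinely fails. Model $X$ as the closure of the $\mathrm{SL}_2$-orbit of $[xy(x^4-y^4)]$ in $\BP(S^6\BC^2)$. Then $V=\BP(S^2\BC^2)$; for $l\neq m$ the point $[lm]$ is the line $\langle l^5m,lm^5\rangle$, and $[l^2]$ is the special line $\{[l^5m]\}$. The surface $\Sigma=\overline{\{([lm],[l^5m])\}}\subset\CU$ is the image of $\BP^1\times\BP^1$ and dominates $V$ with degree $2$. On it, $\xi|_\Sigma$ and $\pi^*\CO_{\BP^2}(1)|_\Sigma$ have bidegrees $(5,1)$ and $(1,1)$. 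Maps $g\colon E\to\Sigma$ of bidegree $(p,q)$ sweep out $\Sigma$, so a family of such lifts falls in your dominant case. For them $\xi\cdot g_*E=5p+q$ and $\deg h_*E=p+q$. Thus the needed inequality $-K_V\cdot h_*E+2<2\xi\cdot g_*E$ is false once $q\geqslant 7p-2$. Even $\dim_{[h]}\Mor(E,V)\geqslant 3(p+q)$ exceeds $2\xi\cdot g_*E$ once $q>7p$.

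So passing to $\Mor(E,V)$ cannot close this case; you have to bound deformations of $g$ inside the swept surface (here $\dim\Mor(E,\Sigma)=2p+2q$). One repair goes through $S=ev(\CY)$, which is a surface. Bound $\dim M$ by $-K_{\widetilde S}\cdot C+2$ for the lifted curve class $C$ on a resolution $\widetilde S$, and deduce $a(S,H|_S)>1$ when $e\geqslant 3$. By Theorem~\ref{Classifya-inv}, $S$ is then swept by a one-parameter family of lines. Re-choosing the lift of $f$ along that ruling puts you in the (corrected) case $\pi(\CY)$ a curve.
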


\section{Non-free components}\label{section: nonfree}

In the previous section, we have shown that the only exceptional components are the ones parametrizing multiple covers of lines. In this section, our aim is to provide a nice geometric characterization of Manin components. In particular, we show that if $M$ is a Manin component parametrizing curves of sufficiently large degree, then a general member of $M$ is a free curve. Let us first recall the definition of a free curve:

\begin{definition}
    Let $C$ be a smooth projective curve and $f: C\to X$ be a morphism. We say that $f$ is a \textit{free curve} (resp. a \textit{very free curve}) if $f^*T_X$ is generically globally generated and $h^1(C, f^*T_X) = 0$ (resp. $f^*T_X(-p)$ is generically globally generated and $h^1(C, f^*T_X(-p)) = 0$ for a general $p\in C$).
\end{definition}

Geometric Manin's Conjecture predicts that a Manin component should parametrize a dominant family of curves. When $C = \BP^1$, this is the same as saying a general member $f:C\to X$ of a Manin component has globally generated tangent bundle, and hence is a free curve. This prompts the following definition:

\begin{definition}
    We say an irreducible component $M \subset \Mor(E,X)$ is a \textit{free component} if $M$ generically parametrizes free curves. Otherwise, we say $M$ is a \textit{non-free component}.
\end{definition}

However, when $g(C) > 0$, a component parametrizing a dominant family of curves may fail to be free, as a generically globally generated tangent bundle can have non-vanishing $h^1$. In such scenario, it can have higher than the expected dimension or can be generically non-reduced. Thus, given a non-free component $M$, it could either parametrize a dominant or a non-dominant family of curves. The following theorem rules out the latter case:

\begin{theorem}[cf. \cite{Fen26} Theorem 4.9]
    Let $X$ be a del Pezzo threefold with $\Pic(X) = \BZ H$ and $H^{3}\in \{4,5\}$. Then there is no non-free component parametrizing a non-dominant family of curves on $X$.
\end{theorem}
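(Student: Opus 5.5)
Let $M\subset \Mor(E,X)$ be an irreducible component whose members sweep out a proper subvariety $Y\subsetneq X$, and let $f$ be a general member of $M$. The strategy is to compare $\dim M$ with the lower bound $\dim_{[f]}\Mor(E,X)\geqslant -K_X\cdot f_*E = 2e$, which holds at every point because $\chi(E,f^*T_X) = 2e$. We then show that maps into a proper subvariety cannot reach this dimension unless they are covers of lines. Covers of lines form the free-of-exceptional component $W_E$ of Theorem \ref{coveroflines}, which is non-free but is handled separately: it is an exceptional component, and the statement concerns the remaining ones.

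The plan is a case analysis on $\dim Y$.

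\textbf{Case $\dim Y = 1$.} Then $Y$ is a single curve, and $M$ parametrizes covers of the normalization $\widetilde Y$.
\begin{itemize}
\item If $\widetilde Y$ has genus $\geqslant 2$, the space of maps $E\to \widetilde Y$ is finite.
\item If $\widetilde Y$ has genus one, the space of maps has dimension $1$.
\item If $\widetilde Y\cong\BP^1$ and $Y$ has degree $d$ with $e = kd$, then maps $E\to\BP^1$ of degree $k$ form a family of dimension $2k$. Since $Y$ is fixed, $\dim M = 2k\leqslant 2e$, with equality forcing $d=1$, i.e.\ covers of lines.
\end{itemize}
A rigid curve is not really possible anyway, since $X$ is covered by free rational curves of each degree. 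Still, this count suffices: either $\dim M<2e$, contradicting the lower bound, or $M$ is the line-cover component.

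\textbf{Case $\dim Y=2$, the main case.} Let $\phi\colon \widetilde Y\to Y$ be a resolution and lift the general $f$ to $\tilde f\colon E\to\widetilde Y$. By generality the deformations of $\tilde f$ dominate $\widetilde Y$, so $\tilde f^*T_{\widetilde Y}$ is generically globally generated. Hence $h^1(E,\tilde f^*T_{\widetilde Y})\leqslant 2$ and
\[
\dim M\leqslant \dim_{[\tilde f]}\Mor(E,\widetilde Y)\leqslant -K_{\widetilde Y}\cdot \tilde f_*E + 2.
\]
Now invoke Theorem \ref{Classifya-inv}. If $a(Y,H|_Y)\leqslant 1$, then $K_{\widetilde Y}+\phi^*H$ is pseudo-effective. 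Since $\tilde f_*E$ is a movable class on the surface $\widetilde Y$, hence nef, we get $-K_{\widetilde Y}\cdot\tilde f_*E\leqslant e$, so $\dim M\leqslant e+2<2e$ once $e\geqslant 3$. If $a(Y,H|_Y)=2$, then $\widetilde Y$ maps to the universal family $\CU$ of lines, so $f$ factors through $\CU$. The dimension estimates of Section \ref{section: exceptional} (Cases (a) and (b), and Theorem \ref{facunifamlinecor}) then show that such an $M$ is not a component unless it parametrizes covers of lines.

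\textbf{Main obstacle.} The hardest step is the boundary of the inequality in the case $a\leqslant 1$. Here we must be careful that $\tilde f_*E$ is nef on $\widetilde Y$, and we must handle the small degrees $e\leqslant 2$ directly. For $e\leqslant 2$ the image is a line or a conic, and a genus one curve maps to a conic only as a double cover, which leads back to the curve case. A second subtlety is when $h^1$ is large because $\tilde f^*T_{\widetilde Y}$ has trivial summands. Atiyah's classification bounds $h^1$ by the number of $\CO_E$ summands, so it is at most $2$, which is what the estimate uses.
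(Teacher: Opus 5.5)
Your proposal is correct and follows essentially the paper's argument. You reduce to the surface swept out, bound $\dim M$ above by $-K_{\widetilde Y}\cdot\tilde f_*E$ plus a constant, pair $K_{\widetilde Y}+t\phi^*H$ against the movable (hence nef) class $\tilde f_*E$, and conclude with Theorem \ref{Classifya-inv} and Theorem \ref{facunifamlinecor}. The differences are minor:
\begin{itemize}
\item You use the cruder bound $h^1\leqslant 2$ from Atiyah's classification instead of the paper's $\dim N\leqslant -K_Y\cdot C+1$ from \cite{Fen26}. This still suffices for $e\geqslant 3$, because Theorem \ref{Classifya-inv} only requires ruling out $a\leqslant 1$.
\item You treat the case $\dim Y=1$ explicitly, which the paper skips.
\item Covers of lines are not an exception to the statement that needs separate handling. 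The component $W_E$ is dominant, so landing among covers of lines simply contradicts non-dominance.
\end{itemize}
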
 

\begin{proof}
    Suppose the contrary that there is an irreducible component $M\subset \Mor(E,X)$ parametrizing a non-dominant family of curves of $H$-degree $d$. By Theorem \ref{coveroflines}, $M$ does not parametrize covers of lines, so we may assume $d \geqslant 3$. Let $Y$ denote a resolution of the closure of the subvariety swept out by curves parametrized by $M$ with a morphism $f: Y \to X$. It is enough to consider the case when $Y$ is a surface.

    Let $N\subset \Mor(E,Y)$ denote the irreducible component induced by $M$, i.e. $f_*: N \to M$ is dominant. Let $s: E\to Y$ be a general member of $N$ with $s_*E = C$. Then we have the dimension bound
    \[
    -K_X\cdot f_*C \leqslant \dim M \leqslant \dim(N) \leqslant -K_Y\cdot C  + 1,
    \]
    where the rightmost inequality follows from \cite[Proposition 4.8]{Fen26}. Let $\epsilon > 0$ be some constant. We can further rewrite the above inequality as 
    \[
    1-(\frac{1}{2} - \epsilon)f^*H\cdot C \geqslant (K_Y + (\frac{3}{2} + \epsilon) f^*H)\cdot C.
    \]
    Since $f^*H\cdot C = d \geqslant 3$,  we may choose $\epsilon$ small enough such that the left hand side of the inequality is less than $0$. By \cite[Theorem 0.2]{Boucksom2013}, the movable cone of curves is dual to the pseudo-effective cone of divisors, hence $K_Y + (\frac{3}{2} + \epsilon)f^*H$ is not big, i.e. $a(Y, f^*H) \geqslant \frac{3}{2} + \epsilon$. Theorem \ref{Classifya-inv} then implies that $a(Y, f^*H) \geqslant 2$, i.e. the map $f:Y\to X$ factors through the universal family of lines on $X$, a contradiction to Theorem \ref{facunifamlinecor}.
\end{proof}

Next, we treat the case when $M$ parametrizes a dominant family of curves. We will focus on the case when the curves parametrized by $M$ have degree at least $5$. The following proposition shows that $M$ generically parametrizes morphisms that are birational onto their images. 

\begin{prop}
    \label{prop: Fen26 Prop 4.6} Let $M$ be an irreducible component of $\Mor(E,X)$ parametrizing a dominant family of curves with degree $e > 4$. Then $M$ either generically parametrizes covers of lines or maps that are birational onto their images.
\end{prop}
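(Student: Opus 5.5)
Let $f\colon E\to X$ be a general member of $M$. I would suppose $f$ is neither birational onto its image nor a cover of a line, and derive a contradiction by comparing dimensions. Then $f$ factors as $E\xrightarrow{\phi} D\xrightarrow{\nu} X$ with $\deg\phi = k\geqslant 2$, where $D$ is the normalization of $f(E)$ and $\nu$ is birational onto its image. Write $d$ for the $H$-degree of $\nu$, so $e = kd$. Since $M$ does not generically parametrize covers of lines, $d\geqslant 2$. By Riemann--Hurwitz, $g(D)\leqslant 1$.

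\textbf{Case $g(D)=1$.} Then $\phi$ is an isogeny, hence unramified. By the lower bound
\[
\dim M\geqslant -K_X\cdot f_*E = 2e,
\]
it suffices to show $\dim M<2e$. Since $E$ is general in moduli, I will fix $D$ up to finitely many choices; the isogenies $E\to D$ of degree $k$ are also finite up to translation. So $\dim M\leqslant \dim_{[\nu]}\Mor(D,X,d)$. Because the family is dominant, $\nu^*T_X$ is generically globally generated, so $h^1(D,\nu^*T_X)\leqslant 3$. This gives
\[
\dim_{[\nu]}\Mor(D,X,d)\leqslant 2d+3.
\]
We have $2d+3<2kd$ whenever $k\geqslant 2$ and $d\geqslant 2$, a contradiction. (A sharper bound via the free-curve analysis would also be available.)

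\textbf{Case $g(D)=0$.} Then $\nu$ is a free rational curve of degree $d$. By Theorem \ref{thm: LT19 Thm 7.9}, the family of such curves has dimension $2d+3-3+\dim\mathrm{Aut}$. Concretely, $\dim\Mor(\BP^1,X,d)=2d+3$. The degree-$k$ maps $E\to\BP^1$ form a family of dimension $2k$, and composition identifies $\dim\mathrm{Aut}(\BP^1)=3$ worth of redundancy. Hence
\[
\dim M\leqslant 2d+3+2k-3=2d+2k.
\]
We need $2d+2k<2kd$, i.e.\ $(k-1)(d-1)>1$. This holds unless $k=d=2$, which is exactly $e=4$, and the hypothesis $e>4$ excludes it. This boundary case is the double covers of conics, $Q_4$, which is consistent with Theorem \ref{irredKontsevich}.

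\textbf{Main obstacle.} The delicate step is making the dimension count rigorous when $M$ might be non-reduced or of excess dimension. The count should bound the actual dimension of the image locus in $\Mor(E,X)$ rather than a tangent space. I would handle this by bounding the dimension of the locus of composite maps directly, as the image of the product of the parameter spaces. The lower bound $\dim M\geqslant 2e$ then finishes the argument, since it holds at every point of $\Mor(E,X)$ by deformation theory ($\chi(f^*T_X)=2e$).

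A second point needing care is the genus one case when $D$ varies. Since $E$ is general in moduli and fixed, $D$ is determined by $E$ up to finitely many isogenous curves, so no extra moduli appear.
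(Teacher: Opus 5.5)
Your argument is correct. You compare $\dim M\geqslant\chi(f^*T_X)=2e$ with the bounds $\dim M\leqslant 2d+3$ when $g(D)=1$ and $\dim M\leqslant 2d+2k$ when $D\cong\BP^1$, and this gives a contradiction except when $k=d=2$; that exceptional case is exactly the double-cover-of-conics component $Q_4$ that the hypothesis $e>4$ excludes. In the rational case you only need that a general $\nu$ in the dominant component of $\Mor(\BP^1,X,d)$ induced by $M$ is free, so the appeal to Theorem \ref{thm: LT19 Thm 7.9} is unnecessary. The paper gives no proof of its own here and simply defers to \cite[Proposition 4.6]{Fen26}; your factorization-and-dimension-count is a complete argument along the expected lines.
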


Let $s:E\to X$ be a general member of $M$. Then $s^*T_X$ is generically globally generated. We divide our discussion based on the possible decompositions of $s^*T_X$ in the non-free dominant situation:

\begin{enumerate}[\hspace{2mm} (1)]
    \item $s^*T_X = \CO_E \oplus \CF$ and $0 \leqslant \mu^{\min}((N_s)_{tf}) \leqslant 1$. In this case we have $h^1(E, s^*T_X) = 1$.

    \item $s^*T_X = \CO_E \oplus \CF$ and $\mu^{\min}((N_s)_{tf}) \geqslant 2$. In this case we have $h^1(E, s^*T_X) = 1$.

    \item $s^*T_X = \CO_E \oplus\CO_E\oplus\CL$. In this case we have $h^1(E, s^*T_X) = 2$.
\end{enumerate}

\begin{definition}
    Let $X$ be a smooth projective threefold. We say a non-free component $M \subset \Mor(E, X)$ is of \textit{type i} if a general map of $M$ is birational onto its image and satisfies condition $(i)$ above.
\end{definition}

The same arguments as in \cite[Theorem 4.16]{Fen26} and \cite[Theorem 4.22]{Fen26} shows that there are no type 1 components parametrizing maps of degree at least $5$ and no type 3 components parametrizing maps of degree at least $4$ for a del Pezzo threefold $X$ with $\Pic(X) = \BZ H$ and $H^{3}\in \{4,5\}$. In the rest of this section, we focus on type 2 components. In fact, we may further distinguish type 2 components in terms of their non-reduced structures. Let $M$ be a type 2 component and $[s]$ a general member. Consider the short exact sequence:
\[
0 \to T_E \to s^*T_X \to N_s \to 0.
\]
Since $\mu^{\min}((N_s)_{tf}) \geqslant 2$, we have $H^1(E, N_s) = 0$. Hence the map on cohomology $H^1(E, T_E) \to H^1(E, s^*T_X)$ is a bijection. This induces a surjection
\[
H^0(E, s^*T_X) \to H^0(E, N_s).
\]
Since $h^1(E, s^*T_X) = 1$, either $M$ is generically non-reduced and has the expected dimension, or $M$ is generically smooth and has higher than the expected dimension. Let $\overline{M} \subset \overline{M}_{1,0}(X)$ be the irreducible component containing the image of $M$ under the map $\Mor(E,X) \to \overline{M}_{1,0}(X)$. We have the following:

\begin{prop} \label{prop: different type 2 comps}
    Let $M$ be a type 2 component.
    \begin{itemize}
        \item If $M$ is generically non-reduced, then the image of $M$ in $\overline{M}$ has codimension one.
        
        \item If $M$ is generically smooth, then the map $M \to \overline{M}$ is dominant.
    \end{itemize}
\end{prop}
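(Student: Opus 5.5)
The plan is to compare the dimension of $M$ with the dimension of $\overline{M}$, using the forgetful map $\Phi\colon M\to\overline{M}$, $[s]\mapsto[s\colon E\to X]$. The fibre of $\Phi$ through a general $[s]$ is the set of reparametrizations $s\circ\varphi$ with $\varphi\in\mathrm{Aut}(E)$. Since $s$ is birational onto its image, this fibre is finite modulo translations, so it has dimension $\dim\mathrm{Aut}(E)=1$. The image $\Phi(M)$ lies in the locus of maps whose domain is isomorphic to the fixed curve $E$. That locus is the fibre of $\overline{M}\to\overline{M}_{1,1}$, or rather of the $j$-invariant map on the open part with smooth domain.

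First I will compute $\dim\overline{M}$. A general $[s]$ is birational onto its image with $\mu^{\min}((N_s)_{tf})\geqslant 2$, so $H^1(E,N_s)=0$. Hence $\overline{M}$ is smooth at $[s]$ of dimension $h^0(E,N_s)=\chi(N_s)=-K_X\cdot s_*E=2e$; the torsion of $N_s$ does not affect $h^1$. By the exact sequence already discussed, $H^0(E,s^*T_X)\to H^0(E,N_s)$ is surjective with kernel $H^0(T_E)$, of dimension $1$. This gives
\[
\dim T_{[s]}M = h^0(s^*T_X)=2e+1 .
\]
The expected dimension of $M$ is $\chi(s^*T_X)=2e$.

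Now split into cases. If $M$ is generically smooth, then $\dim M=h^0(s^*T_X)=2e+1$. So $\dim\Phi(M)=2e+1-1=2e=\dim\overline{M}$, and since $\overline{M}$ is irreducible, $\Phi$ is dominant. If $M$ is generically non-reduced, then by the dichotomy noted before the statement $M$ has the expected dimension $2e$. Then $\dim\Phi(M)=2e-1$, which is codimension one in $\overline{M}$.

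The step needing the most care is justifying that these dimension counts are exact. In the smooth case this is automatic. In the non-reduced case I would argue directly, first showing $\dim M\geqslant \chi(s^*T_X)=2e$ by the standard lower bound for $\Mor$. I would then obtain the upper bound from the fact that $M$ maps into the fibre of the moduli map $\overline{M}\to\overline{M}_{1,1}$ over $[E]$. That fibre has dimension at most $\dim\overline{M}-1$ near $[s]$ when the moduli map is dominant, and this dominance is exactly where non-reducedness is used. Concretely, $H^1(T_E)\to H^1(s^*T_X)$ is an isomorphism, so the deformation of the complex structure of $E$ is unobstructed in $\overline{M}$. I would also check the remaining alternative: if $\overline{M}\to\overline{M}_{1,1}$ were not dominant, then $\Phi(M)$ would equal $\overline{M}$ up to dimension, so $\dim M=2e+1$ and $M$ would be smooth, contradicting non-reducedness. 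This closes the dichotomy.
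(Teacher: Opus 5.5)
Your proof is correct and is essentially the paper's argument: the paper reads both claims off the surjection $H^0(E,s^*T_X)\to H^0(E,N_s)$ with kernel $H^0(E,T_E)$, which is exactly your count $\dim\overline{M}=h^0(E,N_s)=2e$, $\dim M\in\{2e,2e+1\}$, together with your one-dimensional fibres of $\Phi$ coming from $\mathrm{Aut}(E)$. Your last paragraph is superfluous, since $2e=\chi(s^*T_X)\leqslant\dim M\leqslant h^0(s^*T_X)=2e+1$ already settles the non-reduced case. Its claim that the isomorphism $H^1(E,T_E)\to H^1(E,s^*T_X)$ makes the change of complex structure ``unobstructed'' is also backwards: that isomorphism says the Kodaira--Spencer map $H^0(E,N_s)\to H^1(E,T_E)$ vanishes at $[s]$, and it holds in both the smooth and the non-reduced cases.
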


\begin{proof}
    Since the map $H^0(E, s^*T_X) \to H^0(E, N_s)$ is surjective by the paragraph above, the result follows from the isomorphisms $T_{\overline{M}, [s]} \cong H^0(E, N_s)$ and $T_{M, [s]} \cong H^0(E, s^*T_X)$.
\end{proof}

The main tool to show the non-existence of the type 2 components is the Grauert-M\"ulich type theorem (\cite[Theorem 6.5]{LRT25b}). We collect a few numerical results needed to apply such theorem. We omit the proof as it is exactly the same as that of the cited results.

\begin{lemma}[\cite{Fen26} c.f. Proposition 4.13]
    \label{N_s-tor} Suppose that $M$ is a type 2 component. Let $s$ be a general member of $M$. Then the length of the torsion of $N_s$ is at most $1$. 
\end{lemma}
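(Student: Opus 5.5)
The plan is to bound the torsion of $N_s$ through the dimension count that the type 2 condition forces on $M$. Since $s$ is birational onto its image, $T_E\to s^*T_X$ is injective. Its torsion part $\tau\subset N_s$ is supported at the points where $ds$ vanishes, i.e.\ at the cusps (more generally, the non-immersed points) of $s(E)$. Its length is the total ramification of $ds$. Writing $(N_s)_{tf}$ for the torsion-free quotient, we have
\[
\deg (N_s)_{tf} = \deg N_s - \ell(\tau) = -K_X\cdot s_*E - \ell(\tau) = 2e - \ell(\tau).
\]

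The key step is to compare this with the dimension of $M$. By Proposition~\ref{prop: different type 2 comps} and the surjection $H^0(E,s^*T_X)\to H^0(E,N_s)$, the image of $M$ in $\overline{M}$ has dimension at least $h^0(E,N_s)-1$. On the other hand, the maps in $M$ have cusps of total order $\ell(\tau)$ deforming along with them. So the image of $M$ lies in the locus of stable maps with a given type of non-immersed points. Following the argument of \cite[Proposition 4.13]{Fen26}, deformations preserving the torsion are governed by $H^0(E,(N_s)_{tf})$, with the torsion part contributing only the $h^0(\tau)$ directions that move the cusp points. Each cusp condition imposes at least one condition on a dominant family of curves on a threefold, namely the codimension-two condition on the normal direction minus one for moving the point. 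Since $\mu^{\min}((N_s)_{tf})\geqslant 2$, we have $h^1((N_s)_{tf})=0$, so
\[
h^0((N_s)_{tf}) = 2e-\ell(\tau).
\]
This gives
\[
\dim \overline{M}\le 2e-\ell(\tau)+\ell(\tau)\cdot(\text{point moves}) ,
\]
while $\dim\overline{M}\geqslant 2e$, which is the expected dimension on the Kontsevich side.

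To make this precise, I would instead argue directly with the Harder--Narasimhan data. Because $\mu^{\min}((N_s)_{tf})\geqslant 2$ and $(N_s)_{tf}$ has rank $2$, we get $\deg (N_s)_{tf}\geqslant 4$. More usefully, the type 2 condition with $s^*T_X=\CO_E\oplus\CF$ gives $\deg\CF=2e$. The inclusion $T_E=\CO_E\to \CO_E\oplus\CF$ must map into $\CF$ with vanishing order $\ell(\tau)$ generically, since the $\CO_E$-summand component is a constant that must vanish for a non-free component, as $h^1(N_s)=0$ forces $H^1(T_E)\cong H^1(s^*T_X)$. So $\CO_E(D)\subset \CF$ is saturated with $\deg D=\ell(\tau)$. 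This yields an extension
\[
0\to \CO_E(D)\to \CF\to Q\to 0,
\qquad (N_s)_{tf}\cong \CO_E\oplus Q\ \text{ up to extension}.
\]
Then $\mu^{\min}((N_s)_{tf})\geqslant 2$ is contradicted by the $\CO_E$ quotient unless the extension is nonsplit. A nonsplit extension of $Q$ by $\CO_E$ with slopes $\geqslant 2$ bounds $\deg Q=2e-\ell(\tau)$ from below in terms of $\ell(\tau)$.

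The main obstacle is the final numerical step: showing $\ell(\tau)\ge 2$ is incompatible with generality of $s$ in a dominant family. I would first try the cusp-counting dimension argument from \cite[Proposition 4.13]{Fen26}. The locus of maps with two cusps (or a cusp of order two) in a dominant family of genus one curves on a threefold has codimension at least two in the space of all maps. Such a locus therefore cannot fill a component whose dimension is at least the expected value $2e$, given $h^1((N_s)_{tf})=0$.
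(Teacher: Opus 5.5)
There is a genuine gap: neither of your two routes actually produces a bound on $\ell(\tau)$.

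\textbf{The Harder--Narasimhan step is backwards.} Write $ds=(c,\phi)\colon T_E\cong\CO_E\to\CO_E\oplus\CF$ with $c\in H^0(\CO_E)=\BC$. Since $h^1(s^*T_X)=1=h^1(\CO_E)$, we have $H^1(\CF)=0$. So the induced map $H^1(T_E)\to H^1(s^*T_X)=H^1(\CO_E)$ is multiplication by $c$. The bijectivity you quote therefore forces $c\neq 0$, not $c=0$.

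Independently, suppose $c$ were $0$. Then $(N_s)_{tf}\cong\CO_E\oplus Q$ would be an honest direct sum, not an extension, because $\CO_E$ is a summand of $s^*T_X$ that misses the image of $T_E$. That contradicts $\mu^{\min}((N_s)_{tf})\geqslant 2$ outright, so there is no ``nonsplit'' way out. Everything downstream (the saturated $\CO_E(D)\subset\CF$, the lower bound on $\deg Q$) rests on this false premise and never bounds $\ell(\tau)$.

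The corrected observation proves the lemma at once, and in a stronger form. Since $c\neq0$, the composite $T_E\to s^*T_X\to\CO_E$ is an isomorphism. Hence $T_E$ is a subbundle, $s$ is an immersion, and $N_s\cong\CF$ has no torsion at all.

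\textbf{The dimension-count route is asserted, not carried out.} This is presumably the argument the paper imports from \cite{Fen26}, since the bound $1$ is exactly the codimension in Proposition~\ref{prop: different type 2 comps}. Your inequality $\dim\overline{M}\le 2e-\ell(\tau)+\ell(\tau)\cdot(\text{point moves})$ has no content. It also bounds the wrong object: a general member of $\overline{M}$ need not be cuspidal; only the image of $M$ is constrained. And describing torsion sections as ``directions that move the cusp points'' gets their role backwards: they are the first-order deformations that destroy the cusps.

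The missing ingredient is the Arbarello--Cornalba lemma. Let $B$ be the reduced image of $M$ in $\overline{M}$ and $[s]\in B$ general. Then $T_{[s]}B\subset H^0(E,N_s)$ meets $H^0(E,\tau)$ only in $0$. Hence $\dim B\leqslant h^0(E,(N_s)_{tf})=2e-\ell(\tau)$, using $h^1((N_s)_{tf})=0$.

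On the other side, $h^1(E,N_s)=0$ makes $\overline{M}$ smooth of dimension $h^0(E,N_s)=2e$ at $[s]$. Also $\dim B=\dim M-1\geqslant 2e-1$, which is the codimension-one statement of Proposition~\ref{prop: different type 2 comps}. Comparing the two bounds gives $\ell(\tau)\leqslant 1$.

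Your closing claim that the two-cusp locus has codimension at least two is exactly this step, stated without the tool that proves it.
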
 


\begin{lemma}[\cite{Fen26} c.f. Proposition 4.20]
    \label{mu_min at least 4} Let $M$ be a type 2 component parametrizing morphisms of $H$-degree at least $6$. Then we have $\mu^{\min}((N_f)_{tf}) > 3$.
\end{lemma}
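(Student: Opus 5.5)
The lower bound $\mu^{\min}((N_s)_{tf}) > 3$ for a general member $s$ of $M$ will come from the Grauert--M\"ulich type theorem \cite[Theorem 6.5]{LRT25b}, following the argument of \cite[Proposition 4.20]{Fen26}. That theorem compares two quantities. The first is the minimal slope of the torsion-free part of the normal sheaf of a general member of a dominant family of curves. The second is data attached to the family of curves obtained by fixing a general point of $X$ and deforming through it. Concretely, let $M_x \subset M$ be the locus of maps through a general point $x\in X$. Let $\mathcal{F}\subset (N_s)_{tf}$ be the maximal destabilizing subsheaf coming from the Harder--Narasimhan filtration. Since $(N_s)_{tf}$ has rank $2$, the statement reduces to ruling out a rank-one quotient $\CL$ of $(N_s)_{tf}$ with $\deg \CL \le 3$.

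\textbf{Step 1: degree bookkeeping.} We have
\[
\deg N_s = -K_X\cdot s_*E = 2e \ge 12.
\]
By Lemma \ref{N_s-tor}, the torsion of $N_s$ has length at most $1$, so
\[
\deg (N_s)_{tf} \ge 2e-1 \ge 11.
\]
If $(N_s)_{tf}$ is semistable, both slopes are at least $(2e-1)/2 > 3$ and we are done. So we may assume a destabilizing sequence
\[
0\to \CL' \to (N_s)_{tf} \to \CL \to 0
\]
with $\deg \CL \le 3$. Then
\[
\deg \CL' \ge 2e-4 \ge 8.
\]

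\textbf{Step 2: the foliation.} Apply the Grauert--M\"ulich statement: for a dominant family, a destabilizing subsheaf with large slope gap must come from a foliation on $X$. More precisely, the subsheaves $\CL'$, pulled back along the universal family, glue to a rank-two distribution on $X$ containing the tangent directions of the curves. The rank counts as rank one in the normal bundle plus the tangent direction. \cite[Theorem 6.5]{LRT25b} then guarantees that this distribution is integrable with algebraic leaves, since the slope gap is large: $\mu(\CL') - \mu(\CL) \ge 2e-7 > 0$. Each curve of $M$ would therefore lie in a leaf, a surface $S\subset X$. Deforming $s$ inside $S$ then contributes at least the subsheaf part of the deformation space, so $\dim M$ is controlled by curves in surfaces.

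\textbf{Step 3: contradiction via $a$-invariants.} I would run the same computation as in the proof of the theorem excluding non-dominant non-free components. The family of curves in a fixed leaf $S$, together with the one-parameter family of leaves, gives
\[
2e \le \dim M \le -K_{\tilde S}\cdot C + 1 + 1.
\]
This forces $a(S,H|_S) > 3/2$, since $e \ge 6$ makes the constant term negligible. Theorem \ref{Classifya-inv} then gives $a(S,H|_S)=2$ and shows that $S$ is swept out by lines. Then the curves factor through $\CU$, contradicting Theorem \ref{facunifamlinecor}, because $M$ consists of birational maps.

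\textbf{Main obstacle.} The hardest part is verifying that the numerical hypotheses of \cite[Theorem 6.5]{LRT25b} hold with $\deg \CL \le 3$. This requires tracking the torsion from Lemma \ref{N_s-tor} and the condition $h^1(s^*T_X)=1$ of type 2. The degree hypothesis $e \ge 6$ is exactly what makes these inequalities strict.
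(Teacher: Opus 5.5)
The gap is in Step 2, and that step is where the whole content of the lemma lives.

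\textbf{What Step 2 gets wrong.} As it is used in this paper, \cite[Theorem 6.5]{LRT25b} does not turn a positive slope gap into an integrable distribution with algebraic leaves. It is a quantitative inequality. Under the connectedness and flatness hypotheses checked in the proof of Theorem \ref{no type 2 comp}, every consecutive gap in the Harder--Narasimhan filtration of $s^*T_X$ is at most $(q+1)\mu^{\max}(M_{\CG}^{\vee})+t$. A gap that is merely positive, like your $2e-7>0$, triggers nothing. You never bound $q$, $t$ or $\mu^{\max}(M_{\CG}^{\vee})$, and your last paragraph concedes this.

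The rank-two sheaf you want to descend, the preimage of $\CL'$ in $s^*T_X$, is also neither a Harder--Narasimhan piece of $s^*T_X$ nor positive. In type 2 we have $H^1(\CF)=0$, and $H^1(T_E)\to H^1(s^*T_X)$ is a bijection (noted before Proposition \ref{prop: different type 2 comps}). So the composite $T_E\to s^*T_X\to\CO_E$ induces an isomorphism on $H^1$, hence is nonzero, hence is an isomorphism. Therefore $s^*T_X\cong T_E\oplus N_s$, $N_s\cong\CF$ is locally free, and your distribution restricts to $C$ as $\CO_E\oplus\CL'$. Because of that degree-zero summand, the slope mechanism that makes Harder--Narasimhan pieces descend does not apply. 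Algebraicity of leaves would also need positivity along the curves, which you do not have. Conversely, if such a rank-two subsheaf of $T_X$ did exist, $\Pic X=\BZ H$ would force $\deg\CL\in e\BZ$, which already contradicts $2\le\deg\CL\le 3$. So the $a$-invariant argument of Step 3 is not where the difficulty lies.

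\textbf{What is missing.} You need to run the gap inequality once, feeding in the a priori type-2 bound. This is the first pass of the estimate in the proof of Theorem \ref{no type 2 comp}, and the thresholds $e\ge 6$ and $\mu^{\min}>3$ match it exactly.
\begin{itemize}
\item Since $\mu^{\min}((N_s)_{tf})\ge 2>1$, the bundle $(N_s)_{tf}$ is globally generated, so $\CG=(N_s)_{tf}$.
\item Then \cite[1.3 Corollary]{Butler94} gives $\mu^{\max}(M_{\CG}^{\vee})\le \mu^{\min}(\CG)/(\mu^{\min}(\CG)-1)\le 2$.
\item With $q\le 1$ and $t\le 1$ (Lemma \ref{N_s-tor}), every gap is at most $5$.
\item If $\CF$ is semistable, then $\mu^{\min}=e\ge 6$ and there is nothing to prove.
\item Otherwise $\CF=\CL'\oplus\CL$, and the slope panel of $s^*T_X$ is $(\deg\CL',\deg\CL,0)$ with $\deg\CL'+\deg\CL=2e\ge 12$. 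The bound $\deg\CL'-\deg\CL\le 5$ then forces $\deg\CL\ge 4>3$.
\end{itemize}

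The bookkeeping has to be done in $s^*T_X$, whose slopes sum to exactly $2e$; equivalently, use that $N_s$ has no torsion in type 2. Your estimate $\deg\CL'\ge 2e-4$ only yields a gap of $2e-7=5$ at $e=6$, which does not exceed the bound.
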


\begin{theorem} \label{no type 2 comp}
    There is no generically non-reduced (resp. smooth) type 2 component that parametrizes morphisms of $H$-degree at least $6$ (resp. $5$).
\end{theorem}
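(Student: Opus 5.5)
We argue by contradiction, following \cite[Theorem 4.22]{Fen26}. Suppose $M$ is a type 2 component and let $s\colon E\to X$ be a general member, so that $s^*T_X=\CO_E\oplus\CF$, $s$ is birational onto its image, and $\mu^{\min}((N_s)_{tf})\geqslant 2$. The plan is to use the Grauert--M\"ulich type theorem \cite[Theorem 6.5]{LRT25b}. Applied to the family $\overline{M}$ of curves, which dominates $X$, it bounds the gaps between consecutive slopes in the Harder--Narasimhan filtration of the normal sheaf of a general member. We will combine this bound with the numerics of $N_s$ and reach a contradiction with the presence of the trivial summand $\CO_E\subset s^*T_X$.

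\textbf{Step 1: numerics of $N_s$.} We have $\deg N_s=-K_X\cdot s_*E=2e$ and $\rank (N_s)_{tf}=2$. By Lemma \ref{N_s-tor}, the torsion of $N_s$ has length at most $1$, so $\deg (N_s)_{tf}\in\{2e-1,2e\}$. The trivial summand $\CO_E$ of $s^*T_X$ maps to $N_s$, and this will force the Harder--Narasimhan filtration of $(N_s)_{tf}$ to be unbalanced. Concretely, $\CF$ has degree $2e$ and rank $2$, and $T_E\cong\CO_E$ either maps into $\CF$ or not. In either case we obtain a rank one subsheaf or quotient of $(N_s)_{tf}$ of small degree, and the other graded piece then has degree about $2e$. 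So the slope panel of $(N_s)_{tf}$ is of the form $(a,b)$ with $b$ small relative to $a$.

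\textbf{Step 2: splitting into the two cases.} In the generically smooth case, Proposition \ref{prop: different type 2 comps} shows that $M\to\overline{M}$ is dominant, so $s$ is a general point of a dominant component of $\overline{M}_{1,0}(X)$. We can then apply \cite[Theorem 6.5]{LRT25b} directly. It gives $\mu_i-\mu_{i+1}\leqslant 2+\text{(small correction)}$ for the slopes of the normal sheaf modulo torsion. In the generically non-reduced case, the image of $M$ has codimension one in $\overline{M}$, so $s$ is not general in $\overline{M}$. Here we need the extra input of Lemma \ref{mu_min at least 4}, namely $\mu^{\min}((N_s)_{tf})>3$, valid in degree $e\geqslant 6$. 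We then use a general member $s'$ of $\overline{M}$, whose normal sheaf satisfies the Grauert--M\"ulich bound, and semicontinuity of the Harder--Narasimhan polygon to pass to the special member $s$.

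\textbf{Step 3: contradiction.} The lower bound on $\mu^{\min}$ (which is $\geqslant 2$, or $>3$ in the non-reduced case) together with the gap bound pins down $\deg(N_s)_{tf}$ as a function of $e$. We then compare this with the constraint from Step 1 that $\CO_E$ sits in $s^*T_X$. This gives an inequality of the form $e\leqslant 4$ in the smooth case and $e\leqslant 5$ in the non-reduced case, which contradicts the degree hypotheses of Theorem \ref{no type 2 comp}.

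The main obstacle is the non-reduced case. There we have to check that the hypotheses of \cite[Theorem 6.5]{LRT25b} are met for the codimension-one locus, or that semicontinuity transfers the bound correctly. The threshold $e\geqslant 6$ enters only through Lemma \ref{mu_min at least 4}. I would first try to mirror \cite[Theorem 4.22]{Fen26} verbatim, replacing $H^3=3$ by $H^3\in\{4,5\}$ in the computation of $\deg N_s$.
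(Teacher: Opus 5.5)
\paragraph{The central step fails.} You apply the gap bound to the Harder--Narasimhan filtration of $(N_s)_{tf}$ and expect the trivial summand to force $N_s$ to be unbalanced. In a type 2 component it does not. The condition $\mu^{\min}((N_s)_{tf})\geqslant 2$ gives $H^1(E,N_s)=0$, so $H^1(E,T_E)\to H^1(E,s^*T_X)=H^1(E,\CO_E)$ is an isomorphism. Hence the composite $T_E\to s^*T_X\to \CO_E$ is nonzero, and therefore an isomorphism. So $T_E$ is exactly the trivial summand, and $N_s\cong \CF$ is a rank two bundle of degree $2e$ that may well be semistable with slope panel $(e,e)$. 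Your other alternative, $T_E$ mapping into $\CF$, would leave $\CO_E$ as a summand of $N_s$, which type 2 excludes. A gap bound on $N_s$ therefore yields no contradiction.

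\paragraph{The missing idea.} \cite[Theorem 6.5]{LRT25b} controls the Harder--Narasimhan filtration of $s^*T_X$ itself. Its slope panel is $(s_1,s_2,0)$ with $s_1+s_2=2e$, so if every consecutive gap is at most $\gamma$, then $2e\leqslant 3\gamma$. The normal sheaf enters only through the correction term $\gamma=(q+1)\mu^{\max}(M_\CG^{\vee})+t$, where $\CG$ is the globally generated part of $(N_s)_{tf}$. Your lower bounds on $\mu^{\min}$ are used via Butler's estimate $\mu^{\max}(M_\CG^{\vee})\leqslant \mu^{\min}(\CG)/(\mu^{\min}(\CG)-1)$:
\begin{itemize}
\item when $q=0$, this gives $\gamma\leqslant 2+1=3$, hence $e\leqslant 4$;
\item when $q\leqslant 1$ and $\mu^{\min}(\CG)>3$ (Lemma \ref{mu_min at least 4}), it gives $\gamma<2\cdot\frac32+1=4$, hence $e\leqslant 5$.
\end{itemize}
These are not used to ``pin down'' $\deg (N_s)_{tf}$, as you suggest.

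\paragraph{The non-reduced case.} Your plan here would also fail. Harder--Narasimhan polygons are upper semicontinuous, so a special member is at least as unbalanced as a general one. A gap bound for a general member $s'$ of $\overline{M}$ therefore cannot be transferred to $s$, which is the member that actually carries the $\CO_E$ summand. The correct route is to apply \cite[Theorem 6.5]{LRT25b} directly to a variety $W$ mapping generically finitely onto the codimension-one locus induced by $M$. The failure of $T_{W,[s]}\to H^0(E,(N_s)_{tf})$ to be surjective is measured exactly by $q$, which is at most $1$. The hypotheses of that theorem (connected general fibres of $U_W\to W$, lying in the flat locus of the evaluation map) are supplied by \cite[Theorem 4.11, Proposition 4.17]{Fen26}.
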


\begin{proof}
    Suppose to the contrary that there is such a type 2 component $M$. Let $W$ be a variety equipped with a generically finite morphism onto the locus induced by $M$ in $\overline{M}_{1,0}(X)$. By \cite[Theorem 4.11]{Fen26}, a general fibre of the universal family map $U_W \to W$ is connected, and by \cite[Proposition 4.17]{Fen26}, it lies in the flat locus of the evaluation map $U_W \to X$. Hence we are in the situation of Grauert-M\"ulich \cite[Theorem 6.5]{LRT25b}. 

    Let $s:E\to X$ be a general member of $M$. Write the Harder Narasimhan filtration of $s^*T_X$ as 
    \[
    0 = \CF_0 \subset \CF_1 \subset \dots \subset \CF_k = s^*T_X.
    \]
    Denote by $t$ the length of the torsion part of $N_s$, by $\CG$ the subsheaf of $(N_s)_{tf}$ generated by global sections, by $V$ the tangent space to $W$ at $s$, and by $q$ the dimension of the cokernel of the composition
    \[
    V \to T_{\overline{M}_{1,0}(X),[s]} = H^0(E,N_s) \to H^0(E, (N_s)_{tf}).
    \]
    \cite[Theorem 6.5]{LRT25b} shows that for every index $1\leqslant i \leqslant k-1$, we have
    \[
    \mu(\CF_i/\CF_{i-1}) - \mu(\CF_{i+1}/\CF_i) \leqslant (q+1)\mu^{max}({M_\CG\smvee}) + t, 
    \]
    where $M_\CG$ is the syzygy bundle of $\CG$, i.e. the kernel of the surjective map $H^{0}(X,\CG)\otimes \CO_{X}\rightarrow \CG$. We now divide the discussion into whether $M$ is generically non-reduced or not.

    \begin{itemize}
        \item Suppose $M$ is generically non-reduced and parametrizes maps of degree at least $6$. Then Lemma \ref{mu_min at least 4} shows that $\mu^{\min}(\CG) > 3$, so by \cite[1.3 Corollary]{Butler94}, we have 
        \[
        \mu^{max}({M_\CG\smvee}) \leqslant \frac{\mu^{\min}(\CG)}{\mu^{\min}(\CG) - 1} = 1 + \frac{1}{\mu^{\min}(\CG) - 1} < \frac{3}{2}.
        \]
        Since $t \leqslant 1$ by Lemma \ref{N_s-tor} and $q \leqslant 1$ by the proof of \cite[Lemma 3.7]{LRT25a}, we have
        \[
        \mu(\CF_i/\CF_{i-1}) - \mu(\CF_{i+1}/\CF_i) < (1+1)\cdot\frac{3}{2} + 1 = 4.
        \]
        By assumption, $s^*T_X$ contains $\CO_E$ as a factor. The slope panel $SP(s^*T_X) = (s_1, s_2, s_3) = (s_1, s_2, 0)$. Since $s$ has degree at least $6$, $s_1+s_2 \geqslant 12$. This contradicts the bound on the difference between the slope of successive quotients in the filtration. 

        \item Suppose $M$ is generically smooth and parametrizes maps of degree at least $5$. Then $V\to T_{\overline{M}_{1,0}(X),[s]}$ is surjective and hence $q = 0$. We have \[
        \mu(\CF_i/\CF_{i-1}) - \mu(\CF_{i+1}/\CF_i) \leqslant (0+1)\cdot 2 + 1 = 3.
        \]
        The same argument as above gives a contradiction.
    \end{itemize}
\end{proof}

Altogether we have the following:

\begin{theorem} \label{Thm:no non-free component}
    There is no non-free component generically parametrizing a dominant family of maps of $H$-degree $e\geqslant 6$ that are birational onto their images. In particular, a Manin component parametrizing morphisms of $H$-degree $e\geqslant 6$ is a free component.
\end{theorem}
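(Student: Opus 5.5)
Theorem \ref{Thm:no non-free component} will be proved by assembling the results of this section. Let $M\subset\Mor(E,X)$ be a non-free component whose general member $s\colon E\to X$ is birational onto its image, has $H$-degree $e\geqslant 6$, and sweeps out $X$. Since the family is dominant, Theorem \ref{thm: LT22a Prop 2.8} shows that $s^*T_X$ is generically globally generated. Atiyah's classification then splits off $h^1(E,s^*T_X)$ copies of $\CO_E$. As $M$ is non-free, this number is $1$ or $2$; it cannot be $3$, because $\deg s^*T_X=2e>0$.

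We first sort $M$ into the three types.
\begin{itemize}
\item If $h^1(E,s^*T_X)=2$, then $s^*T_X\cong\CO_E\oplus\CO_E\oplus\CL$, so $M$ is of type 3.
\item If $h^1(E,s^*T_X)=1$, then $s^*T_X\cong\CO_E\oplus\CF$. In this case $M$ is of type 1 or type 2 according to $\mu^{\min}((N_s)_{tf})$.
\end{itemize}
We must check that $\mu^{\min}((N_s)_{tf})\geqslant 0$, so that the three types really exhaust all cases. This holds because $N_s$ is a quotient of the generically globally generated bundle $s^*T_X$. Hence every quotient of $(N_s)_{tf}$ is generically globally generated and has nonnegative degree.

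We then treat each type in turn.
\begin{itemize}
\item Type 1 components of degree $\geqslant 5$ and type 3 components of degree $\geqslant 4$ are excluded by the arguments of \cite[Theorem 4.16]{Fen26} and \cite[Theorem 4.22]{Fen26}, as recorded above. Both thresholds lie below $6$.
\item For type 2, the discussion preceding Proposition \ref{prop: different type 2 comps} shows that $H^0(E,s^*T_X)\to H^0(E,N_s)$ is surjective and that $h^1(E,s^*T_X)=1$. Consequently $M$ is either generically non-reduced or generically smooth.
\item Theorem \ref{no type 2 comp} excludes generically non-reduced type 2 components for $e\geqslant 6$ and generically smooth ones for $e\geqslant 5$. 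So type 2 is impossible for $e\geqslant 6$.
\end{itemize}
This proves the first sentence of the theorem.

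For the "in particular" statement, let $M$ be a Manin component of $H$-degree $e\geqslant 6$.
\begin{itemize}
\item By the theorem at the start of this section (cf. \cite[Theorem 4.9]{Fen26}), no non-free component parametrizes a non-dominant family. So if $M$ were non-free, it would be dominant.
\item By Proposition \ref{prop: Fen26 Prop 4.6}, which applies since $e>4$, a dominant component generically parametrizes either covers of lines or maps that are birational onto their images.
\item Covers of lines are excluded. Such maps factor through the universal family $\CU$, so by Theorem \ref{facunifamlinecor} and Theorem \ref{coveroflines} they form the exceptional component $W_E$, which is not Manin.
\item The remaining case, a dominant non-free birational component, is ruled out by the first part of the theorem.
\end{itemize}
Hence $M$ is free.

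The step I expect to need the most care is the bookkeeping that the three types exhaust all non-free dominant cases. This comes down to the nonnegativity of $\mu^{\min}((N_s)_{tf})$ verified above, together with confirming that the degree thresholds of the cited type 1 and type 3 results are compatible with $e\geqslant 6$. The remaining steps are direct citations of earlier results.
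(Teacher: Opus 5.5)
Your overall route is the paper's own. The theorem is stated after ``Altogether we have the following'' and is obtained by combining the theorem excluding non-free non-dominant components, Proposition~\ref{prop: Fen26 Prop 4.6}, the type~1 and type~3 exclusions from \cite{Fen26}, and Theorem~\ref{no type 2 comp}, with covers of lines discarded as exceptional.

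\textbf{Gap in the exhaustiveness step.} This is the one step you add, and you flagged it as the delicate one, but your argument for it is incomplete. Type~1 requires $0\leqslant\mu^{\min}((N_s)_{tf})\leqslant 1$ and type~2 requires $\mu^{\min}((N_s)_{tf})\geqslant 2$. Nonnegativity of $\mu^{\min}$ therefore does not make the three types exhaustive, because the window $1<\mu^{\min}<2$ is left over. Since $(N_s)_{tf}$ has rank two, its minimal slope is either an integer (the degree of the destabilizing quotient line bundle) or half its degree. So the only possible value in that window is $3/2$, which occurs exactly when $(N_s)_{tf}$ is semistable of degree $3$. As $\deg N_s=2e$, this means the torsion of $N_s$ has length $t=2e-3\geqslant 9$.

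\textbf{How to close it.} The case is easy to exclude, but it needs an argument; the paper takes the trichotomy for granted, so you should supply this rather than rely on nonnegativity alone. Write $s^*T_X\cong\CO_E\oplus\CF$. Let $L\cong\CO_E(D)$, with $\deg D=t$, be the saturation of $T_E$ in $s^*T_X$, so that $(N_s)_{tf}=s^*T_X/L$.
\begin{itemize}
\item If $t>0$, then $\mathrm{Hom}(\CO_E(D),\CO_E)=0$. Hence $L\subset\CF$, and $(N_s)_{tf}\cong\CO_E\oplus\CF/L$ has $\mu^{\min}=0$.
\item If $t=0$, the inclusion $T_E\cong\CO_E\to\CO_E\oplus\CF$ is of the form $(c,\sigma)$ with $c\in\BC$.
\begin{itemize}
\item If $c\neq 0$, then $N_s\cong\CF$. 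Its minimal slope is an integer: $e$ if $\CF$ is semistable, and otherwise the degree of its destabilizing quotient.
\item If $c=0$, then $N_s\cong\CO_E\oplus\CF/\sigma\CO_E$ has $\mu^{\min}=0$.
\end{itemize}
\end{itemize}
Thus $\mu^{\min}((N_s)_{tf})$ is always a nonnegative integer, and the three types are exhaustive.

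\textbf{A minor citation point.} Theorem~\ref{thm: LT22a Prop 2.8} concerns $N_f$ on the Kontsevich side, not $s^*T_X$. The generic global generation of $s^*T_X$ for a general member of a dominant component of $\Mor(E,X)$ is the standard fact the paper uses directly. It comes from surjectivity of the differential of $M_{\mathrm{red}}\times E\to X$ at a general point, not formally from that theorem.
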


\section{Low degree curves}\label{section: lowdeg}

In this section, we classify components of the Kontsevich moduli space parametrizing low degree stable maps. This will form the base cases for the induction arguments in Section \ref{section: free}. 

\begin{prop}\label{nodeg3dP4}
    Let $X$ be a quartic del Pezzo threefold. For degree $e\leqslant 3$, there do not exist irreducible components of $\widetilde{M}^{\mathrm{bir}}_{1,0}(X,e)$ generically parametrizing morphisms that are birational onto their images. 
\end{prop}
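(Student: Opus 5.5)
The plan is to show that no curve of the relevant kind exists at all. Suppose $f\colon E\to X$ is a morphism from a smooth genus one curve $E$ that is birational onto its image, with $H$-degree $e\leqslant 3$. Here $X\subset \BP^5$ is embedded by $|H|$, so $\Gamma := f(E)$ is an irreducible curve of degree $e$ in $\BP^5$. Since $f$ is birational onto $\Gamma$, the curve $\Gamma$ has geometric genus one.

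The key step is classical: every irreducible nondegenerate curve of degree $e$ in $\BP^{n}$ spanning a $\BP^{m}$ satisfies $e\geqslant m$. Curves of degree $e$ spanning exactly $\BP^{e}$ are rational normal curves, and these have geometric genus zero. So we only need to treat curves spanning at most $\BP^{e-1}$:
\begin{itemize}
    \item[] If $e=1$, then $\Gamma$ is a line.
    \item[] If $e=2$, then $\Gamma$ is a plane conic or a line.
    \item[] If $e=3$, then $\Gamma$ is either a twisted cubic (rational) or a plane cubic.
\end{itemize}
The only case with geometric genus one is a smooth plane cubic $\Gamma\subset P\cong\BP^2$.

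Next we rule out the plane cubic using the quadrics. Write $X=Q_1\cap Q_2$. Each restriction $Q_i|_P$ is either all of $P$ or a conic in $P$. A conic cannot contain the irreducible cubic $\Gamma$, so both quadrics contain $P$, and hence $P\subset X$. This contradicts the Lefschetz theorem: $\Pic X=\BZ H$, and $H^{3}=4$ forces every surface in $X$ to have degree divisible by $4$, whereas $P$ has degree $1$. Alternatively, one can argue that a smooth complete intersection of two quadrics in $\BP^5$ contains no planes.

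Having shown that no such maps exist, it follows immediately that no component of $\widetilde{M}^{\mathrm{bir}}_{1,0}(X,e)$ generically parametrizes maps birational onto their images. No dimension count is needed.

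I expect the only delicate point to be the "no planes in $X$" step; it is settled by the Lefschetz/degree argument above. The degenerate possibilities (a singular plane cubic, or a cubic spanning only a line) are rational or impossible, so they are excluded automatically.
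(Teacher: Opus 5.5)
Your proof is correct and follows essentially the same route as the paper: you use the linear span to reduce to the case of a plane cubic $\Gamma\subset P$, and then observe that a quadric containing $X$ but not $P$ would cut $P$ in a conic containing $\Gamma$, which is impossible. The only difference is that you justify the paper's unproved claim that $X$ contains no plane, by noting that $\Pic X=\BZ H$ forces every surface in $X$ to have degree divisible by $H^{3}=4$, whereas a plane has degree $1$.
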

\begin{proof}
    Since the fundamental line bundle $H$ is very ample, any curve on $X$ of degree at most $2$ has genus $0$. Suppose that $e = 3$.
    If a curve of degree $3$ spans a $\mathbb{P}^{3}$, then it must be rational.
    Hence any genus $1$ curve $C$ of degree $3$ is contained in a unique plane $P$. 
    Since $X$ does not contain any plane, we see that $P\cap X$ is one dimensional.
    Note that $X$ is a complete intersection of two quadric hypersurfaces in $\mathbb{P}^{5}$.
    We take a quadric $Q\subset \mathbb{P}^{5}$ defining $X$ which does not contain $P$. 
    Then we see that $C\subset P\cap X\subset P\cap Q$, a contradiction.
\end{proof}

\begin{prop}\label{deg4dP4}
    Let $X$ be a quartic del Pezzo threefold.
    There is a unique irreducible component in $\widetilde{M}^{\mathrm{bir}}_{1,0}(X,4)$ generically parametrizing morphisms that are birational onto their images.
\end{prop}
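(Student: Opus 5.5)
The approach is to identify degree $4$ genus one curves on $X$ explicitly as linear sections. Let $f\colon E\to X$ be a general member of a component of $\widetilde{M}^{\mathrm{bir}}_{1,0}(X,4)$ that is birational onto its image $C$. I first show that $C$ is a smooth elliptic normal quartic spanning a $\BP^3$. Since $C$ has arithmetic genus at least one and degree $4$, it cannot span a $\BP^4$ or $\BP^5$, because a nondegenerate curve of degree $4$ in $\BP^4$ is a rational normal curve. If $C$ lies in a plane, then as in Proposition \ref{nodeg3dP4} it lies in $P\cap Q$ for a quadric $Q$ containing $X$ but not the plane $P$; then $C$ has degree at most $2$, a contradiction. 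Hence $C$ spans a unique $\BP^3=\Lambda$. A nondegenerate quartic in $\BP^3$ of arithmetic genus $\geqslant 1$ has arithmetic genus exactly one and is a $(2,2)$ complete intersection. Since $f$ is birational from a genus one curve, $C$ is either smooth, or rational with a single node or cusp. In the second case $f$ would normalize it, which is impossible from a smooth genus one $E$. So $C$ is smooth and $f$ is an isomorphism onto $C$.

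Next I show that $C = X\cap\Lambda$. Every quadric of the pencil defining $X$ restricts to a quadric on $\Lambda$ containing $C$. I will argue that these restrictions span the full pencil of quadrics in $\Lambda$ through $C$. If some quadric of the pencil contained $\Lambda$, then $X\cap\Lambda$ would be a quadric surface in $\Lambda$. That would give a surface $S\in|H|$ that is a quadric in a $\BP^3$, i.e. degree $2$, while $H^3=4$, a contradiction. Alternatively, I could use that hyperplane sections of $X$ have isolated singularities, \cite[Lemma 3.1]{Lehmann2021a}. So the restrictions give two independent quadrics of $\Lambda$, whose intersection is a degree $4$ curve containing $C$, hence equal to $C$. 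Therefore $C=X\cap\Lambda$ is a codimension two linear section of $X$.

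Conversely, codimension two linear sections give the component. Consider the Grassmannian $\BG(3,5)$ of $3$-planes in $\BP^5$, of dimension $8$. By Bertini, a general $\Lambda$ gives a smooth curve $X\cap\Lambda$ of genus one and degree $4$; this is a dominant family of curves. The open locus $U\subset\BG(3,5)$ where $X\cap\Lambda$ is smooth is irreducible. Sending $\Lambda$ to $[X\cap\Lambda\hookrightarrow X]$ gives a morphism $U\to\overline{M}_{1,0}(X,4)$, which is injective because $\Lambda$ is the span of its curve. By the previous paragraph, every birational genus one stable map of degree $4$ with smooth domain lies in this image. Hence the locus of such maps in $\widetilde{M}^{\mathrm{bir}}_{1,0}(X,4)$ is irreducible, and its closure is the unique component. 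Its dimension is $8=2e$, which agrees with the expected dimension $-K_X\cdot C = 2e$.

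The main obstacle is making sure that every component generically parametrizing birational maps has smooth domain at its general point. One might worry about components whose general member has irreducible but singular image, or non-embedded image. The argument above handles this, since birational maps from a smooth genus one curve onto a quartic spanning $\BP^3$ force the image to be smooth. What remains is a check that the family is a full component rather than a proper closed subset of a larger one. For this I will verify that $N_{C/X}\cong\CO_C(1)^{\oplus 2}$, so $h^1=0$ and the family is smooth of dimension $8$ at $[C]$.
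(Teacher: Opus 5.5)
Your proposal is correct and follows essentially the same route as the paper. You show the image spans a unique $\BP^3=\Lambda$, identify it with the codimension two linear section $X\cap\Lambda$, and conclude that the locus of birational maps with smooth domain is an open subset of the Grassmannian of $3$-planes in $\BP^5$. Along the way you supply details the paper leaves implicit: ruling out spans of $\BP^4$ and $\BP^5$, smoothness of the image, and that no quadric of the pencil contains $\Lambda$. One slip there: the would-be quadric surface is not in $|H|$, but it is still excluded because every surface in $X$ has degree $4n$.

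One correction to your closing paragraph. Components whose general member has an irreducible nodal rational domain are not ruled out by your image-smoothness argument, since that argument assumes a smooth domain. They are ruled out by a dimension count: such loci have dimension at most $7$, while every component of $\overline{M}_{1,0}(X,4)$ has dimension at least $8$. The paper also leaves this step implicit.
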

\begin{proof}
    It suffices to show that the locus $N$ parametrizing birational stable maps $f\colon E\rightarrow X$ with smooth domains is irreducible.
    As discussed in the proof of Proposition \ref{nodeg3dP4}, the image $C:= f(E)$ is contained in a unique $\mathbb{P}^{3}$. 
    Since the intersection $D:= \mathbb{P}^{3}\cap X$ is a connected curve of arithmetic genus $1$, we must have $C = D$. 
    On the other hand, a general codimension $2$ linear section of $X$ is a smooth curve of degree $4$ and genus $1$. 
    Hence it induces a stable map of $\widetilde{M}^{\mathrm{bir}}_{1,0}(X,4)$ with smooth domain. 
    Thus, we conclude that locus $N$ is birational to the Grassmannian $\mathrm{Gr}(4,6)$.
\end{proof}

\begin{prop}
    \label{nodeg3or4dP5} Let $X$ be a quintic del Pezzo threefold. For degree $e\leqslant 4$, there do not exist irreducible components of $\widetilde{M}^{\mathrm{bir}}_{1,0}(X,e)$ generically parametrizing morphisms that are birational onto their images.
\end{prop}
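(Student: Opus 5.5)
We argue as in Propositions \ref{nodeg3dP4} and \ref{deg4dP4}: we show that a quintic del Pezzo threefold $X\subset\BP^6$ contains no genus one curve $C$ of degree $e\leqslant 4$. Since any birational stable map with irreducible domain of arithmetic genus one onto a curve of geometric genus zero would make a component of $\widetilde{M}_{1,0}$ generically parametrize rational images with a contracted genus-one tail, it suffices to rule out integral curves $C=f(E)$ of degree $e\leqslant4$ and arithmetic genus at least one that arise as images of smooth genus one curves. Since $H$ is very ample and $X$ is cut out by quadrics, such curves must be elliptic normal curves.

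The case $e\leqslant 2$ is immediate, because curves of degree at most $2$ are rational. For $e=3$, an integral genus one curve of degree $3$ spans a plane $P$ and is a plane cubic. A quintic del Pezzo threefold is an intersection of quadrics: it is a linear section of $\BG(1,4)$, which is cut out by the Pl\"ucker quadrics. Since $X$ contains no plane, $P\cap X$ is a proper subscheme of $P$ cut out by conics. Hence $C\subset P\cap Q$ for some quadric $Q\supset X$ not containing $P$, and this is a conic, which is impossible for a cubic.

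For $e=4$, $C$ spans a $\BP^3=:\Lambda$ and is a complete intersection of two quadrics in $\Lambda$. In $\Lambda$ there is exactly a pencil of quadrics containing $C$. The quadrics of $\BP^6$ containing $X$ form a $5$-dimensional space, and they restrict to $\Lambda$ inside this pencil. Consider $\Lambda\cap X$. Since $\deg X=5$ and $X$ contains no surface of degree less than $5$ in a $\BP^3$ (by Picard rank one every surface lies in $|nH|$ and has degree $5n$), the intersection $\Lambda\cap X$ is a curve. The key step is to show it is scheme-theoretically cut out by those quadrics, so that $\Lambda\cap X\supseteq C$ with $C$ a complete intersection. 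Then $\Lambda\cap X$ is contained in the base locus of at most a pencil of quadrics, so $\Lambda\cap X=C$ exactly. A dimension count on linear sections then gives the contradiction: a codimension-$3$ linear section of $X$ through a $\BP^3$ has expected degree $5$. More precisely, the intersection of $X$ with a general $\BP^4\supset\Lambda$ is a curve of degree $5$ and arithmetic genus $1$ (a hyperplane section of a del Pezzo surface of degree $5$). This curve contains $C$, so it splits as $C\cup L$ with $L$ a line. Then $\Lambda\cap X\supseteq C$ and $L\not\subset\Lambda$ for general choices, and varying $\BP^4\supset\Lambda$ shows $L$ meets $C$ in points lying in $\Lambda$. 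I would instead use the cleaner approach via the Grassmannian: $C\subset\BG(1,4)$ of degree $4$ and genus $1$ corresponds to a rank $2$ bundle $\CS^\vee|_C$ of degree $4$. We then analyze whether the span $\Lambda$ lies in $\BG(1,4)$ together with the linear section condition.

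The main obstacle is this $e=4$ step. My first attempt would use adjunction on $S=X\cap\BP^4$, a (possibly singular) del Pezzo surface of degree $5$ containing $C$, where $C\cdot(-K_S)=4$ and $C^2=2p_a-2-K_S\cdot C=4$. Then Hodge index gives $(C\cdot K_S)^2=16\geqslant C^2K_S^2=20$, which is a contradiction. This needs $S$ to be a normal surface containing $C$ as a Cartier divisor, and I would choose $\BP^4\supset\Lambda$ general so that this holds, using that members of $|H|$ have isolated singularities.
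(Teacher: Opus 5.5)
Your cases $e\leqslant 3$ are the paper's argument. For $e=4$, the paper does what your first attempt does: it shows $C=\Lambda\cap Q_1\cap Q_2$ is a $(2,2)$ complete intersection, and then it simply invokes \cite[Proposition 4.4]{CKK25} to say such a curve cannot lie on $X$. Your final Hodge index argument is a genuinely different, self-contained replacement for that citation. On a Gorenstein hyperplane section $S\supset C$ one has $K_S=-H|_S$, $K_S^2=5$ and $-K_S\cdot C=4$. Adjunction then forces $C^2=4$, and $(K_S\cdot C)^2=16<20=K_S^2C^2$ contradicts Hodge index. This is the classical fact that elliptic curves on a quintic del Pezzo surface have degree at least $5$; it also explains why the quartic case behaves differently, since there one gets equality $16=16$. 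The residual-line and Grassmannian detours are unnecessary and should be cut. You should also say explicitly that a planar quartic is excluded exactly as for $e=3$.

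There is a gap at the step you flag, and it has three parts.

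First, $X\cap\BP^4$ is a curve. You need a hyperplane section $S=X\cap\BP^5$ with $\BP^5\supset\Lambda$; these hyperplanes form a $\BP^2$.

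Second, it is false that every member of $|H|$ on a quintic del Pezzo threefold has isolated singularities; the isolated-singularity lemma used in the paper is for $H^3=4$. For a line $L\subset X$, the bundle $N_{L/X}(-1)\subset N_{L/\BP^6}(-1)\cong\CO_L^{5}$ has degree $-2$ with summands of degree $\leqslant 0$. Its fibres therefore span at most a $4$-dimensional space, so all $T_xX$ with $x\in L$ lie in one hyperplane. That hyperplane section, which is the surface swept out by lines meeting $L$, is singular along $L$.

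Third, even isolated singularities would not make $C$ Cartier if one of them lies on $C$.

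The repair comes from your own ``key step''. The threefold $X$ is scheme-theoretically cut out by quadrics. Their restrictions to $\Lambda$ must span the pencil $H^0(\mathcal{I}_{C/\Lambda}(2))$, since otherwise $X$ would contain a quadric surface or $\Lambda$. Hence $\Lambda\cap X=C$ as schemes, and $\mathcal{I}_{C/X}(H)$ is globally generated by the three linear forms vanishing on $\Lambda$.

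By Bertini, a general member $S$ of this net is smooth off $C$. For $x\in C$, the members singular at $x$ are those whose image in the $2$-dimensional space $N^\vee_{C/X}\otimes k(x)$ vanishes. By global generation this is exactly one point of the $\BP^2$, so the members singular somewhere on $C$ form at most a curve. Thus a general $S$ is a smooth quintic del Pezzo surface containing $C$ as a Cartier divisor, and your computation concludes the proof.
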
 

\begin{proof}
    Since the fundamental line bundle $H$ is very ample, any curve on $X$ of degree at most $2$ has genus $0$. 
    
    Suppose that $e = 3$.
    Any genus $1$ curve $C$ of degree $3$ is contained in a unique plane $P$. 
    Since $X$ does not contain any plane, we see that $P\cap X$ is one dimensional. Recall that $X$ is defined by quadric hypersurfaces in $\mathbb{P}^{6}$.
    Let $Q\subset \mathbb{P}^{6}$ be a quadric defining $X$ which does not contain $P$. Then we see that $C\subset P\cap X\subset P\cap Q$, a contradiction.
    
    Next, suppose that there exists a birational map $f\colon E\rightarrow X$ from an elliptic curve $E$ to a degree $4$ curve $C$ on $X$. 
    By a similar argument, we see that the curve $C$ is contained in a unique $P\cong\mathbb{P}^{3}$. 
    We take two quadrics $Q_{1}, Q_{2}\subset \mathbb{P}^{6}$ such that $X\subset Q_{1}\cap Q_{2}$ and $P\not\subset Q_{1}\cap Q_{2}$. 
    Since $X$ does not contain any plane and quadric surface by Proposition \ref{a-inv-prop}, $D := P\cap Q_{1}\cap Q_{2}$ is a curve. 
    However, both $C$ and $D$ have degree $4$, hence we must have $C=D$. 
    Since $D$ is a complete intersection curve of arithmetic genus $1$, $D$ is not contained in $X$ by \cite[Proposition 4.4]{CKK25}.
\end{proof}

\begin{prop}
    \label{deg5dP5} Let $X$ be a quintic del Pezzo threefold. There is a unique irreducible component in $\widetilde{M}^{\mathrm{bir}}_{1,0}(X,5)$ generically parametrizing morphisms that are birational onto their images.
\end{prop}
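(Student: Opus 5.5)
The plan is to mimic Proposition \ref{deg4dP4}: show that every genus one curve of degree $5$ on $X$ is a linear section, and then identify the parameter space with an open subset of a Grassmannian. Let $f\colon E\to X$ be a birational stable map with smooth domain of degree $5$, and set $C=f(E)$.

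\textbf{Step 1: the span of $C$.} By Riemann--Roch, $h^0(E,f^*H)=5$, so $C$ spans at most a $\mathbb{P}^4$. I claim the span is exactly $\mathbb{P}^4$ and that $C$ is smooth there. If $C$ spanned only a $\mathbb{P}^3$, I would argue as in Proposition \ref{nodeg3or4dP5}. Take the quadrics cutting out $X\subset\mathbb{P}^6$; the quadrics containing $X$ restrict to a net of quadrics on $\mathbb{P}^3$. Since $X$ contains no plane and no quadric surface by Proposition \ref{a-inv-prop}, two of these quadrics meet in a degree $4$ curve. That curve cannot contain a degree $5$ curve, which is a contradiction. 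If $C$ spans $\mathbb{P}^4$ but is singular, then its arithmetic genus exceeds $1$, which violates Castelnuovo's bound for a nondegenerate quintic in $\mathbb{P}^4$. Hence $C$ is a smooth elliptic normal quintic in $P\cong\mathbb{P}^4$.

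\textbf{Step 2: $C=X\cap P$.} Since $X\cap P$ is a linear section of $X$ of codimension $2$, it has degree $H^3=5$ whenever it is one-dimensional. I must rule out $X\cap P$ containing a surface. Such a surface $S$ would lie in a hyperplane section of $X$ and satisfy $\deg S\le 5$. Any surface in $X$ is in $|nH|$ with degree $5n$, so $S$ would be the whole hyperplane section $X\cap\mathbb{P}^5$, which is not contained in $P$. Therefore $X\cap P$ is a curve of degree $5$ containing $C$, and so $C=X\cap P$ as cycles. Because the section is a local complete intersection of degree $5$, the equality also holds scheme-theoretically.

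\textbf{Step 3: parametrization.} By Steps 1 and 2, $C\mapsto \mathrm{span}(C)$ is injective from the locus of such curves to $\mathrm{Gr}(5,7)$, with inverse $P\mapsto X\cap P$. A general codimension $2$ linear section is a smooth curve by Bertini. It is a canonically trivial curve by adjunction, since $K_X=-2H$ and cutting by two hyperplanes gives $K_C=(-2H+2H)|_C=0$. So it is a smooth genus one quintic. The locus of smooth sections is therefore a nonempty open subset of the irreducible variety $\mathrm{Gr}(5,7)$. The locus of birational maps with smooth domains is thus irreducible of dimension $10=2e$, and its closure is the unique component.

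I expect the main obstacle to be Step 1, in particular the degenerate case of a span of $\mathbb{P}^3$. The argument depends on knowing that the restricted quadrics have no common surface component. If the argument via Proposition \ref{a-inv-prop} is not enough, I would fall back on the fact that $X$ is cut out by quadrics and on \cite[Proposition 4.4]{CKK25} as in Proposition \ref{nodeg3or4dP5}.
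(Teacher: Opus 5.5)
Your proposal is correct and follows essentially the same route as the paper: the quadric argument of Proposition \ref{nodeg3or4dP5} forces $C$ to span a $\mathbb{P}^4$, then $C = X\cap\mathbb{P}^4$, and the locus is identified with an open subset of $\mathrm{Gr}(5,7)$. You also fill in details the paper leaves implicit: smoothness of $C$ via Castelnuovo's bound, excluding a surface in $X\cap\mathbb{P}^4$ using $\Pic(X)=\BZ H$, and adjunction for the general codimension-two linear section.
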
 

\begin{proof}
    It suffices to show that the locus $N$ parametrizing birational stable maps $f\colon E\rightarrow X$ with smooth domains is irreducible.
    As discussed in the proof of Proposition \ref{nodeg3or4dP5}, the image $C:= f(E)$ is contained in a unique $\mathbb{P}^{4}$. 
    Since the intersection $D:= \mathbb{P}^{4}\cap X$ is a connected curve of arithmetic genus $1$, we must have $C = D$. 
    On the other hand, a general codimension $2$ linear section of $X$ is a smooth curve of degree $5$ and genus $1$. 
    Hence it induces a stable map of $\widetilde{M}^{\mathrm{bir}}_{1,0}(X,5)$ with smooth domain. 
    Thus, we conclude that locus $N$ is birational to the Grassmannian $\mathrm{Gr}(5,7)$.
\end{proof}

\begin{remark}\label{rmk: kontsevich dominates M10}
    Notice that in the setting of Proposition \ref{deg4dP4} and Proposition \ref{deg5dP5}, there is a surjective map $\widetilde{M}^{bir}_{1,0}(X, H^3) \to \overline{M}_{1,0}$. This follows from the fact that $\widetilde{M}^{bir}_{1,0}(X, H^3)$ contains nodal rational curves.
\end{remark}

\begin{corollary}
    Let $X$ be a del Pezzo threefold with $\Pic(X) = \BZ H$ and $H^{3}\in \{4,5\}$. For $e\geqslant H^{3}$, there exist elliptic curves of degree $e$ on $X$.
\end{corollary}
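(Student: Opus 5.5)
We argue by induction on $e$, starting from the smooth elliptic normal curves of degree $H^{3}$ given by Propositions \ref{deg4dP4} and \ref{deg5dP5}. A general codimension $2$ linear section $C_0 = X\cap \BP^{H^3-1}$ is a smooth genus one curve of degree $H^{3}$. For the inductive step, suppose $C\subset X$ is a smooth elliptic curve of degree $e\geqslant H^{3}$ with $h^1(C,N_{C/X})=0$. We attach a general line $\ell$ meeting $C$ transversally at one point $p$, giving a nodal curve $C\cup \ell$ of arithmetic genus one and degree $e+1$. We then smooth it. Such lines exist because the lines through a general point of $X$ form a finite nonempty set, and lines sweep out $X$. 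A general line $\ell$ has normal bundle $\CO\oplus\CO$ (quartic case) or $\CO\oplus\CO$ generically (quintic case), hence is free.

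The key steps, in order, are as follows.

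First, check that $C_0$ satisfies $h^1(N_{C_0/X})=0$. Since $N_{C_0/X}\cong \CO_{C_0}(1)^{\oplus 2}$ is a sum of degree $H^3>0$ line bundles on an elliptic curve, the vanishing holds. We also note that $N_{C_0/X}$ is globally generated.

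Second, for the glued curve $D=C\cup\ell$, use the standard exact sequences
\[0\to N_{D/X}|_{\ell}(-p)\to N_{D/X}\to N_{D/X}|_{C}\to 0,\]
where $N_{D/X}|_C$ is the elementary modification of $N_{C/X}$ at $p$ in the direction of $\ell$, and $N_{D/X}|_\ell$ is the positive modification of $N_{\ell/X}$ at $p$. Since $N_{\ell/X}\cong\CO\oplus\CO$ (or at worst $\CO(1)\oplus\CO(-1)$, which we avoid by generality), the modification is $\CO(1)\oplus\CO$, and twisting by $-p$ leaves $h^1=0$. Also $N_{D/X}|_C\supset N_{C/X}$ has $h^1=0$. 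Hence $h^1(N_{D/X})=0$. By standard smoothing of nodal curves with unobstructed normal sheaf (the node is smoothable because $H^1$ vanishes and the local-to-global map to $T^1$ at the node is surjective), $D$ deforms to a smooth genus one curve $C'$ of degree $e+1$.

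Third, verify that $C'$ again satisfies $h^1(N_{C'/X})=0$ by upper semicontinuity, so the induction proceeds. Embeddedness is automatic since we smooth inside the Hilbert scheme, and a smooth genus one curve is produced.

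The main obstacle is the smoothing step, namely the surjectivity of $H^0(N_{D/X})\to T^1_p$. I would establish it by using the generality of $\ell$: the positive modification makes $N_{D/X}|_\ell$ have a section nonvanishing at $p$ in the $T^1$ direction. Since $h^1(N_{D/X}|_C)=0$, sections on $\ell$ extend, which forces surjectivity. If this becomes delicate, the alternative is to attach instead a conic (free, with $N=\CO(1)\oplus\CO(1)$) at one point to go from $e$ to $e+2$. Combined with the base degrees $H^3$ and $H^3+1$ (the latter obtained from the line construction once), this covers all $e\geqslant H^3$.
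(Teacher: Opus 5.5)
Your proposal is correct, and it takes a different route from the paper's. The paper works with stable maps. It glues a free degree-$H^{3}$ elliptic map (a codimension two linear section) to a free rational curve of degree $e-H^{3}$ in one step, obtains $h^{1}(f^{*}T_{X})=0$ from the restriction sequence, and smooths the domain. A smoothing of a stable map need not be an embedding, so the paper then needs more input. It uses balancedness of the normal bundle (\cite[Proposition 5.5]{Fen26}) to get $h^{1}(N_{\tilde f}(-p-q))=0$, and a dimension count over the diagonal of $X\times X$ to show that the general smoothing is injective.

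You instead work in the Hilbert scheme of embedded curves and add one line at a time, Hartshorne--Hirschowitz style. Smoothness of the resulting curve is then automatic once the node is smoothed, and you avoid the balancedness and local-freeness results of Section \ref{section: free} entirely. The cost is the elementary-modification bookkeeping for $N_{D/X}$ and the need to check that the node really is smoothed. Your argument is more elementary and self-contained. The paper's fits the stable-map framework used later and directly produces a free map.

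On the step you flag as the main obstacle, your sketch uses the wrong vanishing. Extending sections from $\ell$ to $D$ requires $h^{1}(N_{D/X}|_{C}(-p))=0$, which you do not carry through the induction. Argue on the $C$ side instead:
\begin{itemize}
\item The map $N_{D/X}\to T^{1}_{p}$ kills the subsheaf $N_{D/X}|_{\ell}(-p)$ of sections vanishing along $C$, so it factors through $N_{D/X}|_{C}$.
\item The kernel of $N_{D/X}|_{C}\to T^{1}_{p}$ is exactly $N_{C/X}$; this is your modification sequence $0\to N_{C/X}\to N_{D/X}|_{C}\to T^{1}_{p}\to 0$.
\item Hence $H^{0}(N_{D/X})\to H^{0}(N_{D/X}|_{C})\to T^{1}_{p}$ is a composite of surjections. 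This uses only $h^{1}(N_{D/X}|_{\ell}(-p))=0$ and $h^{1}(N_{C/X})=0$, both of which you already have.
\end{itemize}
So the conic fallback is unnecessary.

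Two minor points remain. First, you do not need every line on the quartic threefold to have trivial normal bundle; state this generically in both cases. Second, to choose a general line through a general point of the new curve $C'$ that meets it only there, add to the induction hypothesis that the curves sweep out $X$. This is inherited, because the degenerations $C\cup\ell$ already sweep out $X$.
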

\begin{proof}
    When $e = H^{3}$, the claim follows from Proposition \ref{deg4dP4} and Proposition \ref{deg5dP5}. 
    Note that a general member $f\colon E\rightarrow X$ of $\widetilde{M}^{\mathrm{bir}}_{1,0}(X,H^{3})$ is free by the exact sequence $0\rightarrow T_{E}\rightarrow f^{*}T_{X}\rightarrow N_{f}\rightarrow 0$ and the fact that $f(E)$ is a codimension $2$ linear section of $X$.

    Suppose $e>H^{3}$. 
    Gluing a genus one stable map $(g\colon E\rightarrow X)\in \widetilde{M}^{\mathrm{bir}}_{1,0}(X,H^{3})$ and a genus zero stable map $(h\colon \mathbb{P}^{1}\rightarrow X)\in \overline{M}_{0,0}(X,e-H^{3})$, one can obtain a union of free curves $(f\colon E \cup \mathbb{P}^{1} \rightarrow X)\in \overline{M}_{1,0}(X,e)$ with $E\cap \mathbb{P}^{1} = p$.  
    By the exact sequence
    \[
    0\rightarrow f|_{E}^{*}T_{X}(-p)\rightarrow f^{*}T_{X}\rightarrow f|_{\mathbb{P}^{1}}^{*}T_{X}\rightarrow 0, 
    \]
    we obtain that $H^{1}(f^{*}T_{X})= 0$. 
    Hence we can smooth $f$ to a free stable map $\tilde{f}\colon E'\rightarrow X$. 
    By \cite[Proposition 5.5]{Fen26}, we may assume $\tilde{f}$ is balanced. 
    Then $h^{1}(N_{\tilde{f}}(-p-q))=0$ for any $p,q\in E'$. 
    This implies that any fibre of $s\colon M''\rightarrow X\times X$ has the expected dimension $2e-4$, where $M''\subset \widetilde{M}^{\mathrm{bir}}_{1,2}(X,e)$ is the component containing $(\tilde{f}\colon E'\rightarrow X,p,q)$. 
    In particular, the preimage $s^{-1}(\Delta)$ of the diagonal $\Delta\subset X\times X$ cannot form an irreducible component of $\widetilde{M}^{\mathrm{bir}}_{1,0}(X,e)$. 
    Therefore, we conclude that a general deformation of $\tilde{f}$ is an embedding, which proves the claim. 
\end{proof}

\section{Free components}\label{section: free}

In this section, we classify the irreducible components of the Kontsevich moduli space (Theorem \ref{irredKontsevich}). To do so, we will prove the Movable Bend-and-Break (Theorem \ref{Intro:MBB}) and use induction on the irreducibility of lower degree components. This allows us to conclude the uniqueness of Manin components for sufficiently large degree, proving Theorem \ref{irredMor}. The strategies in this section follow closely the arguments in  \cite[Section 5]{Fen26}, hence we focus on the main steps during the proof.

We will denote by $\widetilde{M}^{bir}_{g,n}(X,e)$ the union of irreducible components of $\overline{M}_{g,n}(X,e)$ parametrizing stable maps with irreducible domains that are birational onto the images. We will also abbreviate $H$-degree by degree throughout the section.

\subsection{Movable Bend-and-Break}\label{subsec: MBB}

The idea of Movable Bend-and-Break is simple. Given a free component $M$ of $\widetilde{M}^{bir}_{g,n}(X,e)$, we construct a one-dimensional locus $T$ in $M$ parametrizing stable maps whose domains are irreducible nodal rational curves. Then we apply Mori's Bend-and-Break and classify all possible stable maps with reducible domains in the closure of $T$. To constrain the number of possible outcomes, we require the stable maps parametrized by $T$ to go through the maximum number of general points. This will follow from Theorem \ref{thm: LT22a Prop 2.8} once we know that a general member $[f]$ of $M$ has balanced normal bundle, and this is the content of the following two propositions.

\begin{prop}[cf. \cite{Fen26} Proposition 5.2]\label{prop: locally free normal sheaf}
    Let $X$ be a del Pezzo threefold with $\Pic(X) = \BZ H$ and $H^{3}\in \{4,5\}$. 
    Let $e\geqslant 6$ and let $M\subset \mathrm{Mor}(E,X,e)$ be a dominant component generically parametrizing morphisms that are birational onto their images.
    Then the normal sheaf $N_{f}$ of a general member $f$ of $M$ is locally free. 
\end{prop}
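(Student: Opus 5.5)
We argue by contradiction, following the strategy of \cite[Proposition 5.2]{Fen26}. Let $f\colon E\to X$ be a general member of $M$. The torsion of $N_f$ is supported exactly at the points where $df$ fails to be injective, i.e.\ at the ramification points of $f$ onto its image. So it suffices to show that a general $f$ is unramified, which makes $f$ an immersion. Suppose instead that a general $f$ has a ramification point. The idea is to show that the locus of ramified maps has positive codimension in any dominant component of degree $e\geqslant 6$, so it cannot be all of $M$.

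First we check the setup. By Theorem \ref{Thm:no non-free component}, since $f$ is birational onto its image and $M$ is dominant of degree $e\geqslant 6$, the component $M$ is free. Hence $h^1(E,f^*T_X)=0$ and $\dim M = 2e$. We split $N_f = \mathcal{T}\oplus (N_f)_{tf}$, where the torsion $\mathcal{T}$ has length $t\geqslant 1$, and we have
\[
\deg (N_f)_{tf} = 2e - t .
\]

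Next we bound $\dim M$ using the tangent-space identification $T_{[f]}M = H^0(E,f^*T_X)\to H^0(E,N_f)$. The torsion directions do not change the image curve to first order; they only move the cusp, and only within a bounded family. More precisely, the deformations of $f$ that keep a ramification point are governed by the subsheaf $N'_f\subset N_f$. Here $N'_f$ is the sheaf generated by $(N_f)_{tf}$ twisted down by the ramification divisor $R$, plus the torsion. Its dimension satisfies
\[
h^0(N'_f) \leqslant h^0\bigl((N_f)_{tf}(-R)\bigr)+t+\dim(\text{translations}).
\]
Since the family is dominant, $(N_f)_{tf}$ is generically globally generated. On an elliptic curve, Atiyah's classification then gives $h^1((N_f)_{tf}(-R))\leqslant 1$ once the slopes are large. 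Slopes of size roughly $e-t\geqslant 5$ suffice, and this is where $e\geqslant 6$ enters. Combining these, the locus of ramified maps has dimension at most $2e-1$. This contradicts the assumption that this locus is all of $M$, which has dimension $2e$.

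The main obstacle is making the second step rigorous. We need the locus of maps with a cusp at a moving point to have dimension strictly less than $2e$. This requires controlling $\mu^{\min}((N_f)_{tf})$ from below so that the twist by $-R$ costs exactly $t$ conditions, with no extra $h^1$ appearing. The first approach to try is to reuse Lemma \ref{N_s-tor} and the Grauert--M\"ulich estimate from the proof of Theorem \ref{no type 2 comp}, now applied to the free case, to obtain $\mu^{\min}((N_f)_{tf})\geqslant 2$. If that estimate is insufficient, the fallback is the argument of \cite{Fen26}: there the stable maps with a cusp are parametrized by a smooth family of maps from the normalization that stay ramified at one marked point, and its dimension is computed as $\chi(f^*T_X(-p))+1-\ldots<2e$.
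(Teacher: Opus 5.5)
There is a genuine gap: both steps that carry the proof are left unproved, and the fix you suggest for the second one does not work in the range $e\geqslant 6$. First, the dimension count for ramified maps is never set up. $N'_f$ is not a well-defined sheaf, and the bound $h^0(N'_f)\leqslant h^0((N_f)_{tf}(-R))+t+1$ does not come from any deformation problem. Even granting it, since $\deg R=t$ it gives $2e-2t+1+h^1((N_f)_{tf}(-R))$, which equals $2e$ when $t=1$ and $h^1=1$. So ``$h^1\leqslant 1$'' would not suffice anyway.

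The clean version works with $f^*T_X$ instead of $N_f$, which removes the torsion bookkeeping. Maps with $df_p=0$ at a fixed $p$ are maps with a prescribed $1$-jet at $p$, and their deformation space has tangent space $H^0(E,f^*T_X(-2p))$ (this is \cite[1.8 Theorem]{Kollar1996}, as the paper uses it). Letting $p\in E$ and $f(p)\in X$ vary, the non-immersions in $M$ form a locus of dimension at most $1+3+(2e-6)+h^1=2e-2+h^1(E,f^*T_X(-2p))$. Hence, if a general member of the $2e$-dimensional component $M$ is not an immersion, then $h^1(E,f^*T_X(-2p))\geqslant 2$ for some $p$.

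The real missing idea is how to rule this out. Your plan to extract $\mu^{\min}((N_f)_{tf})\geqslant 2$ from Grauert--M\"ulich does not deliver it. That estimate only bounds consecutive gaps $a_i-a_{i+1}<4$ of the Harder--Narasimhan slopes $a_1\geqslant a_2\geqslant a_3>0$ of $f^*T_X$. Together with $\sum a_i=2e$, this yields only $a_3>(2e-12)/3$, which is vacuous at $e=6$; and for the quotient $(N_f)_{tf}$ you only get $\mu^{\min}\geqslant a_3$.

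The paper instead uses the condition $h^1\geqslant 2$ itself to force small slopes. By Atiyah's classification, each indecomposable summand $\mathcal{G}'$ of $f^*T_X$ with $h^1(\mathcal{G}'(-2p))>0$ has $\deg\mathcal{G}'\leqslant 2\rank\mathcal{G}'$. These summands cannot exhaust $f^*T_X$, because their total degree is at most $6<2e$. A line bundle contributes $h^1\leqslant 1$, so they form a rank-two summand all of whose slopes are $\leqslant 2$, i.e.\ $a_2\leqslant 2$. Only at this point is a single gap bound enough: $a_1-a_2<4$ (from the proof of Theorem~\ref{no type 2 comp}) gives $2e=a_1+a_2+a_3<6+2+2=10$, contradicting $e\geqslant 6$.

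Your fallback via \cite{Fen26} (``$\chi(f^*T_X(-p))+1-\ldots$'') is not an argument as written. It would need exactly this splitting-type analysis to control the $h^1$ term.
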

\begin{proof}
    It is enough to show that a general member $f\colon E\rightarrow X$ of $M$ is an immersion. 
    Suppose the contrary. 
    By \cite[1.8 Theorem]{Kollar1996}, we have $h^{1}(E,f^{*}T_{X}(-2p)) > 1$ for some $p\in E$. 
    The general member $f$ of $M$ is free by Theorem \ref{Thm:no non-free component}.  
    Hence $f^{*}T_{X}$ is generically globally generated and the slope panel $SP(f^{*}T_{X}) = (a_{1},a_{2},a_{3})$ with $a_{1}\geqslant a_{2}\geqslant a_{3} > 0$. 
    By the consequences in Section \ref{section: lowdeg}, $a_{1}+a_{2}+a_{3} \geqslant 2e > 6$. 
    If $a_{1}=a_{2}=a_{3}$, then we have $h^{1}(E, f^{*}T_{X}(-2p)) = 0$ since $\deg f^{*}T_{X}(-2p) > 0$. 
    Let $\CG$ be the sum of all summands $\CG'$ of $f^{*}T_{X}$ with $h^{1}(\CG'(-2p))>0$. 
    Then $\deg\CG\leqslant 2\rank\CG\leqslant 6 < \deg f^{*}T_{X}$. 
    Thus $\CG\neq f^{*}T_{X}$. 
    Hence $f^{*}T_{X}$ decomposes into a direct sum of $\CF$ and $\CG$ such that $h^{1}(E, \CF(-2p)) = 0$ and $h^{1}(E, \CG(-2p)) > 1$. 
    
    Assume that $\CG$ is a line bundle. 
    By the Riemann-Roch theorem, we have $h^{1}(E, \CG(-2p)) = h^{0}(E, \CG(-2p))-\chi(E, \CG(-2p)) = h^{0}(E, \CG(-2p)) - \deg\CG + 2$. 
    Hence $h^{1}(E, \CG(-2p))\leqslant 1$. 

    Thus, we see that $\CG$ is a rank two vector bundle. 
    In this case, 
    \[h^{1}(E,\CG(-2p)) = h^{0}(E,\CG(-2p)) -  \chi(E,\CG(-2p)) = h^{0}(E,\CG(-2p)) - \deg\CG + 4.\]
    Hence $\deg\CG \leqslant 4$, and $\deg\CG = 4$ if and only if $\CG = \CO(2p)\oplus \CO(2p)$. 
    Therefore, $a_{2}\leqslant 2$. 
    By the proof of Theorem \ref{no type 2 comp}, we see that $a_{1}-a_{2} < 4$. 
    Hence $\deg f^{*}T_{X} = a_{1} + a_{2} + a_{3} <10$, a contradiction.
\end{proof}

\begin{prop}[cf. \cite{Fen26} Proposition 5.5]\label{Prop: balanced normal bundle}
    Let $X$ be a del Pezzo threefold with $\Pic(X) = \BZ H$ and $H^{3}\in \{4,5\}$. 
    For $e\geqslant H^{3}$, let $M\subset \widetilde{M}_{1,0}^{\mathrm{bir}}(X, e)$ be a free component.
    Then a general member $f$ of $M$ has balanced normal bundle, i.e. $SP(N_{f}) = (e,e)$. 
\end{prop}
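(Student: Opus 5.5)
Note first that $\deg N_f = \deg f^*T_X = 2e$ and $N_f$ has rank $2$, so "balanced" means $SP(N_f)=(e,e)$, i.e. $N_f$ is semistable. The plan is an induction on $e$, modelled on \cite[Proposition 5.5]{Fen26}: degenerate a general member of $M$ to a reducible stable map, check balancedness on the degeneration, and conclude by openness of semistability.

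\textbf{Base case $e=H^3$.} By Propositions \ref{deg4dP4} and \ref{deg5dP5}, a general member of the unique birational component is a smooth codimension $2$ linear section $C=X\cap \BP^{H^3-1}$. Its normal bundle is therefore $N_{C/X}\cong \CO_C(1)^{\oplus 2}$, of slope panel $(e,e)$.

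\textbf{Inductive step for $e>H^3$.} I will use that free curves deform in families and that semistability is open in families of vector bundles. It then suffices to exhibit one point of $M$ whose normal sheaf, or a suitable limit of it, is semistable.

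1. Degenerate a general member of $M$ to a map $f\colon E\cup_p \BP^1\to X$, where $f|_E$ lies in a free component of degree $e-1$ (balanced by induction) and $f|_{\BP^1}$ is a general free line.
2. If Theorem \ref{Intro:MBB} is not yet available, I would instead run the argument forward: glue a general balanced curve of degree $e-1$ to a general line through a general point, and show the gluing smooths into $M$. This needs the irreducibility of free components of degree $e-1$ and the uniqueness of the free component of degree $e$, as in the Corollary closing Section \ref{section: lowdeg}. The proof there already produces a balanced smoothing, which suggests that Fen26's argument is intended to carry over.
3. Analyse the normal sheaf of the nodal curve. It restricts to $N_{f|_E}$ twisted up at $p$ in the direction of the line, and similarly on the line side. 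For a general line through a general point of a del Pezzo threefold, $N_{\ell/X}=\CO\oplus\CO$, and after the twist $N|_\ell=\CO(1)\oplus\CO$.
4. On the elliptic side, $N_{f|_E}$ is semistable of slope $e-1$. An elementary modification at $p$ in a general direction produces a bundle of degree $2e-1$. Since $2e-1$ is odd, this bundle is stable, and its gluing with the line side yields a limit of slope panel $(e,e)$.
5. I would make the limit precise as in \cite{Fen26}. The cleanest route is to check that $h^0(N_{\tilde f}\otimes L^{-1})=0$ for a general line bundle $L$ of degree $e$, and to verify this vanishing on the nodal curve by upper semicontinuity.

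If $e-1=H^3$ is not covered by the inductive hypothesis in the right form, I can alternatively glue the degree-$H^3$ elliptic curve to a rational curve of degree $e-H^3$. In that case I would use the balanced normal bundles of general rational curves on del Pezzo threefolds.

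\textbf{Main obstacle.} The hardest step is controlling the limit line bundle: showing that no destabilizing line subbundle of degree $\geqslant e+1$ survives the degeneration. Such a subbundle would restrict to $E$ with degree $\geqslant e$, or to $\ell$ with positive degree, compatibly at the node. The restriction to $E$ would need degree $\geqslant e$ inside a stable bundle of degree $2e-1$ after modification, which is impossible. On the line side, the generality of the gluing direction at $p$ excludes the remaining case. I expect to rely on the generality of the gluing direction, which Theorem \ref{thm: LT22a Prop 2.8} justifies since we can impose passage through general points.
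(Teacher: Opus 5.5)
Your proposal has a genuine gap, and the main one is in Steps 1--2, not in the bundle computation. Your induction needs every free component $M$ of degree $e$ to contain a nodal map $E\cup_p\BP^1\to X$ whose elliptic part is a free curve of lower degree.

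\begin{itemize}
\item In the paper this degeneration is Theorem \ref{Thm:MBB}. Its proof starts by invoking the present proposition: the $2$-dimensional family through $e-1$ general points comes from $SP(N_f)=(e,e)$ via Construction \ref{construction of T}. So using it here is circular.
\item Your fallback (glue first, then smooth) only shows that the one component containing the glued curve is balanced. To transfer this to an arbitrary free $M$ you invoke uniqueness of the free component in degree $e$. That is Theorem \ref{irredKontsevich}, which is itself deduced from Theorem \ref{Thm:MBB}.
\item The Corollary at the end of Section \ref{section: lowdeg} gives existence only, not uniqueness.
\end{itemize}

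The paper avoids degenerations altogether. Following \cite{Fen26}, it reduces balancedness to local freeness of $N_f$, i.e.\ to a general member being an immersion. This is checked directly on a general member of an arbitrary $M$:
\begin{itemize}
\item for $e\geqslant 6$, via Koll\'ar's criterion $h^1(f^*T_X(-2p))>1$ together with the slope-panel and Grauert--M\"ulich bounds (Proposition \ref{prop: locally free normal sheaf});
\item for $e=H^3$, via linear sections;
\item for $e=5$, $H^3=4$, via the absence of type 2 components and the fibres of $M'\to\overline{M}_{1,1}$.
\end{itemize}

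Even granting the degeneration, Step 4 is false exactly where your induction starts. In the base case $N_{f|_E}\cong\CO_E(1)^{\oplus 2}$, and more generally whenever $N_{f|_E}\cong L\oplus L$, every direction in the fibre at $p$ lies in a copy of $L$. The positive elementary modification is then $L(p)\oplus L$, with slope panel $(e,e-1)$. Oddness of $2e-1$ only says semistable equals stable; it does not give semistability.

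Moreover, the direction $T_p\ell$ is not general, since only finitely many lines pass through $p$. So the claim in your ``Main obstacle'' paragraph, that a degree $\geqslant e$ subbundle on the $E$-side is impossible, fails. The nodal sheaf $N_Z$ may well contain a line subbundle of bidegree $(e,1)$, i.e.\ of total degree $e+1$.

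Your Step 5 test is the right way to make the limit rigorous, since it handles twisting by components automatically. Take the test bundle of bidegree $(e-1,1)$ with general restriction to $E$. On $E$, the only surviving section lies in $L(p)$; on $\ell$, it lies in the $\CO(1)$-summand of $N_Z|_\ell$. So $h^0$ vanishes if and only if the fibres of these two subbundles at the node are different lines. That is a first-order condition at the node which you would still have to prove; generality of the gluing direction cannot supply it.
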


\begin{proof}
    By the arguments in \cite[Proposition 5.5]{Fen26}, it suffices to show that any component $M$ of $\widetilde{M}^{bir}_{1,0}(X,e)$ has locally free normal bundle. 
    By Proposition \ref{prop: locally free normal sheaf}, the claim holds for $e\geqslant 6$. 
    For the case $e=H^{3}$, Proposition \ref{deg4dP4} and Proposition \ref{deg5dP5} show that a general member of $\widetilde{M}^{bir}_{1,0}(X,H^{3})$ is a smooth codimension two linear section of $X$, hence they have balanced normal bundles. 
    Thus, the only case left is $\widetilde{M}^{bir}_{1,0}(X,5)$ when $H^{3}=4$. 
    
    By Theorem \ref{no type 2 comp} and Proposition \ref{prop: different type 2 comps}, there is no component in $\Mor(E,X,5)$ which dominates $M$, hence $M$ parametrizes stable maps whose domains vary in the moduli space of genus one curves. Let $M'$ denote the induced component of $M$ in $\widetilde{M}^{bir}_{1,1}(X,5)$ and consider the forgetful map $\pi: M' \to \overline{M}_{1,1}$. A general fibre of $\pi$ is smooth and can be identified with some locus in the morphism scheme $W \subset \Mor(E,X,5)$. This implies that a general member $[f]$ of $W$ has vanishing $h^1$, and similar arguments using slope panel in Proposition \ref{prop: locally free normal sheaf} concludes the result.
\end{proof}

Before proving the Movable Bend-and-Break, we review a useful construction.

\begin{definition} \cite[Section 6.2]{Beheshti2022} 
    Let $p:U\to R$ be a family of irreducible and reduced curves with evaluation map $ev: U\to X$. We say that the family $p$ is basepoint free if $ev$ is flat.
\end{definition}

\begin{construction}(c.f. \cite[Lemma 6.4]{Beheshti2022})
\label{construction of T} 
    Given a free component $M\subset \widetilde{M}^{bir}_{1,0}(X,e)$, we denote by $M^\circ$ the open subset of $M$ parametrizing free curves. We have two possible scenarios for a general member $[f]$ in $M^\circ$:
    \begin{enumerate}
        \item The normal bundle $N_f$ is indecomposable of degree $2e$ or is a direct sum of line bundles of the same degree.
    
        \item The normal bundle $N_f$ is of the form $\CL_1\oplus \CL_2$ where $l_1 = \deg \CL_1$, $l_2 = \deg \CL_2$ and $0 < l_1 < l_2$.
    \end{enumerate}
    Then for any positive integers $r$ and $s$ satisfying (in the respective cases)
    \begin{enumerate}
        \item $r \leqslant e - 1$ and $s \leqslant 2e - 2r - 1$, or
    
        \item $r \leqslant l_1$ and $s \leqslant l_2 - l_1$,
    \end{enumerate}
    denote by $T_{r,s}$ the locus in $M^\circ$ parametrizing curves going through $r$ general points and $s$ general members of $p$. If $p$ is constructed by taking complete intersections of a big and basepoint free linear series, then the proof of \cite[Lemma 6.4]{Beheshti2022} implies that 
    \[
        \codim(T_{r,s}) = 2r+s.
    \]
\end{construction}

We also need the following definition:

\begin{definition}\label{def: banana}
\mbox{}
\begin{itemize}
    \item We say that a prestable curve $C$ is a \textit{banana curve} if $C$ has genus one and consists of two irreducible components $Z_{1}$, $Z_{2}$ which are isomorphic to $\mathbb{P}^{1}$. 

    \item We say that a prestable curve $C$ is of \textit{banana type} if $C$ is obtained from a banana curve by attaching rational curves. 
\end{itemize}
\end{definition}

\begin{theorem}[Movable Bend-and-Break] \label{Thm:MBB} 
    Let $X$ be a smooth del Pezzo threefold with $\Pic(X) = \BZ H$ and $H^{3}\in \{4,5\}$. 
    For $e\geqslant H^{3}+1$, any free component $M\subset\widetilde{M}_{1,0}(X,e)$ contains a stable map $f: Z = Z_1 + Z_2 \to X$ satisfying the following conditions:
    \begin{itemize}
        \item $Z_1$ is a smooth genus one curve and $f|_{Z_1}$ is free.
        \item $Z_2$ is a rational curve and $f|_{Z_2}$ is free.
        \item $f|_{Z_i}$'s are birational onto their images.
    \end{itemize}
\end{theorem}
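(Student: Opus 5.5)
We follow the strategy of \cite[Section 5]{Fen26}, adapted from \cite[Theorem 6.6]{Beheshti2022}. Let $M$ be a free component of degree $e\geqslant H^3+1$. By Proposition \ref{Prop: balanced normal bundle}, a general member $[f]$ has $SP(N_f)=(e,e)$, so we are in scenario (1) of Construction \ref{construction of T}. We choose $r$ general points and $s$ general members of a basepoint free family $p$ of curves (complete intersections of two general members of $|H|$, which is very ample) so that $2r+s=\dim M-1=2e-1$, with $r\leqslant e-1$ and $s\leqslant 2e-2r-1$. Concretely we take $r=e-1$ and $s=1$. Then $T:=T_{r,s}$ is a one-dimensional family of stable maps through $e-1$ general points, meeting a general $H$-conic-type curve.

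\textbf{Step 1: degenerate.} We first restrict to a further codimension-one condition in the fibre over $\overline{M}_{1,1}$, or else use that $T$ is a complete curve in the proper space $\overline{M}_{1,0}(X,e)$. Mori's Bend-and-Break, in the form used in \cite[Lemma 6.4, Theorem 6.6]{Beheshti2022}, then shows that the closure $\overline{T}$ must contain a map with reducible domain. The reason is that maps from smooth genus one curves through $e-1\geqslant 2$ fixed general points cannot move in a complete one-parameter family with fixed image points unless they break.

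\textbf{Step 2: classify the boundary points.} Let $f\colon Z\to X$ be a boundary point of $\overline{T}$. Its components either carry genus one or form a banana-type configuration. For each component $Z_i$ we count how many of the $e-1$ points and how many members of $p$ it meets. Since the points are general, every component through $r_i$ of them lies in a family of dimension at least $2r_i+s_i$. On the other hand, by the dimension estimates for rational curves (Theorem \ref{thm: LT19 Thm 7.9}) and for genus one curves (Sections \ref{section: exceptional}--\ref{section: nonfree}), a component of degree $e_i$ can pass through at most $e_i-1$ general points if it is free and birational, and through fewer if it is a cover of a line or non-free.

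Summing these bounds, the only way to accommodate $e-1$ general points is a configuration with exactly two components $Z_1\cup Z_2$, each free and birational onto its image. This rules out extra rational tails, multiple covers of lines (which move in too small a family to pass through general points), contracted components, and banana curves (here the fact that the genus can't be split into two rational curves through so many points is used). Free rational curves of degree $\geqslant 2$ are birational by Theorem \ref{thm: LT19 Thm 7.9}. Degree considerations, using that there are no genus one curves of degree $<H^3$ (Propositions \ref{nodeg3dP4}, \ref{nodeg3or4dP5}), force $\deg Z_1\geqslant H^3$ and hence $\deg Z_2\leqslant e-H^3$. The hypothesis $e\geqslant H^3+1$ is what guarantees $Z_2$ is nonempty.

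\textbf{Main obstacle.} The hardest part is the banana-type and non-free-component exclusion in Step 2. A genus one component of low degree might not be free, and we must still check that it cannot absorb its share of points. For degrees $\geqslant 6$ this follows from Theorem \ref{Thm:no non-free component}. For the small degrees $4$ and $5$ we instead use the explicit descriptions in Section \ref{section: lowdeg} and Proposition \ref{Prop: balanced normal bundle}. For banana curves, we first try the point count: two rational components of degrees $a+b=e$ pass through at most $(a-1)+(b-1)=e-2<e-1$ general points, which excludes them.
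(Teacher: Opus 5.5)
There is a genuine gap in Step 1, and Step 2 does not repair it.

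\textbf{Step 1.} You justify the existence of a reducible member of $\overline{T}$ by saying that ``maps from smooth genus one curves through fixed general points cannot move in a complete one-parameter family unless they break''. This is false in general when the domain moduli vary along the family. For example, the cubics through $8$ general points of $\BP^2$ form a complete one-parameter family of genus one curves through fixed points, and their only degenerate members are irreducible nodal cubics. Mori's Bend-and-Break needs either a fixed domain or rational curves. Along your one-dimensional $T_{e-1,1}$ the $j$-invariant need not be constant, so $\overline{T}$ may meet only the divisor of irreducible nodal domains.

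The paper avoids this as follows. It imposes only the $e-1$ point conditions, which gives a two-dimensional $S\subset M$ dominating $\overline{M}_{1,1}$ (via $\rho$ on $\pi^{-1}(S)$). It then lets $T\subset S$ be the curve of maps whose domains are irreducible nodal rational curves; a general member is a smooth point by \cite[Proposition 5.15]{Fen26}. Normalizing gives a one-parameter family $T'$ of rational curves through $e-1\geqslant 2$ general points. Rational Bend-and-Break and \cite[Theorem 1.4]{Beheshti2022} then apply and produce a union of two free rational curves.

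\textbf{Step 2.} Your point count is off by one. A free curve of $H$-degree $a$, rational or genus one, moves in a $2a$-dimensional family, so it passes through $a$ general points, not $a-1$. For instance, finitely many conics pass through two general points, which is exactly the input from \cite{Bea95} that the paper uses. So the banana locus, of dimension $2e-2$, is not excluded by passing through $e-1$ general points, and your inequality $(a-1)+(b-1)<e-1$ proves nothing.

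More importantly, once the degeneration is run through nodal rational curves, bananas genuinely occur: the two preimages of the node may land on different components (see the remark after the theorem, for $H^3=5$, $e=5$). They must be smoothed, not excluded. The paper attaches a line via \cite[Lemma 5.29]{Fen26} and then smooths $Z_1\cup Z_2$ into an elliptic curve. This uses either $h_*Z_1\neq h_*Z_2$, or the non-birationality of $s^{(2)}\colon \overline{M}_{0,2}(X,e_1)\to X^2$ (with \cite{Bea95} when $H^3=4$, $e_1=2$).

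Similarly, when both preimages lie on one component, the genus one part of the limit is a nodal rational curve, not a smooth elliptic curve. Smoothing it to a free elliptic component needs several inputs:
\begin{itemize}
\item the bound $2e'-1$ on nodal rational genus one maps, which comes from balanced normal bundles of general rational curves \cite[Theorem 1.4]{Shen12};
\item Theorem \ref{no type 2 comp} and Proposition \ref{prop: different type 2 comps};
\item Remark \ref{rmk: kontsevich dominates M10}.
\end{itemize}
Your proposal has no smoothing step at all. It simply assumes that the genus one component of the limit is already smooth and free.
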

\begin{proof}
    Let $[f]$ be a general member of $M$. Then $f$ is a birational map onto its image by Proposition \ref{prop: Fen26 Prop 4.6}. By Proposition \ref{Prop: balanced normal bundle}, the normal bundle of $f$ has slope panel $SP(N_{f}) = (e,e)$. Thus Construction \ref{construction of T} gives a $2$-dimensional locus $S\subset M$ consisting of stable maps with irreducible domains whose images pass through $e-1$ general points $p_{1},\dots, p_{e-1}\in X$. Let $\pi\colon \overline{M}_{1,1}(X)\rightarrow \overline{M}_{1,0}(X)$ be the forgetful map, $\rho\colon \overline{M}_{1,1}(X)\rightarrow \overline{M}_{1,1}$ be the stabilization map, and $s\colon \overline{M}_{1,1}(X)\rightarrow X$ be the evaluation map. 
    Since the points $p_{1},\dots,p_{e-1}$ are chosen to be general, $S$ generically parametrizes free stable maps and $\pi^{-1}(S)$ maps surjectively onto $\overline{M}_{1,1}$ by $\rho$. Thus one can take a one parameter family $T\subset S$ consisting of stable maps whose domains are nodal rational curves. Moreover, by \cite[Proposition 5.15]{Fen26}, a general member of $T$ is a smooth point.
    Let $T' \subset \overline{M}_{0,0}(X)$ be the locus obtained by precomposing the stable maps of $T$ with a partial normalization.

    By Mori's Bend-and-Break, there exists a stable map $g'$ with a reducible domain in the closure of $T'$. Since the Movable Bend-and-Break for rational curves holds for del Pezzo threefolds when $e\geqslant 3$ by \cite[Theorem 1.4]{Beheshti2022}, we may assume that $g'$ is a union of two free rational curves $g'\colon Z'_{1}\cup Z'_{2}\rightarrow X$.
    Let $(g\colon Z_{1}\cup Z_{2}\rightarrow X) \in \overline{T}$ be the genus one stable map which is the image of $g'$ under the map $\overline{T}'\rightarrow \overline{T}$, where $\overline{T}'$ (resp. $\overline{T}$) is the closure of $T'$ in $\overline{M}_{0,0}(X)$ (resp. $T$ in $\overline{M}_{1,0}(X)$). Then the same argument as in \cite[Proposition 5.18]{Fen26} shows that there are two possibilities for $g$: 
    \begin{enumerate}[(i)]
        \item $Z_{1}$ is a nodal rational curve and $Z_{2}\cong \mathbb{P}^{1}$:

        By \cite[Theorem 1.4]{Shen12}, the normal bundle of a general map of $R^{rat}_{e'}$ in Theorem \ref{thm: LT19 Thm 7.9} is balanced. Hence a similar computation in the proof of Proposition \ref{prop: locally free normal sheaf} shows that a general member of $R^{rat}_{e'}$ is an embedding. This shows that the locus of degree $e'$ genus one nodal rational curves has dimension at most $2e'-1$. Then Theorem \ref{no type 2 comp}, Proposition \ref{prop: different type 2 comps}, and Remark \ref{rmk: kontsevich dominates M10} together imply that we may smooth $Z_1$ in the domain of $g$ to an elliptic curve $\widetilde{Z}_{1}$ and obtain a stable map $\tilde{g}: \widetilde{Z}_{1}\cup Z_2\to X$. In particular, the restriction $\tilde{g}|_{\widetilde{Z}_{1}}$ is a free genus one curve. This gives the desired stable map. 
        
        \smallskip

        \item The domain of $g$ is a banana curve:

        We may assume that each intersection point of $Z_{1}\cap Z_{2}$ maps to a general point of $X$. Since $e-1\geqslant 3$, the restriction of $g$ to at least one component of the domain is a very free rational curve. 
        Hence by \cite[Lemma 5.29]{Fen26}, we can deform $g$ into a stable map $h\colon Z_{1}\cup Z_{2}\cup L\rightarrow X$ such that $Z_{1}\cup Z_{2}$ is a banana curve and $h|_{L}\in \overline{M}_{0,0}(X,1)$. 
        If $-K_{X}\cdot h_{*}Z_{1} \neq -K_{X}\cdot h_{*}Z_{2}$, which implies $h_{*}Z_{1} \neq h_{*}Z_{2}$, then we can deform $Z_{1}\cup Z_{2}$ into an elliptic curve $\widetilde{Z}_{1}$ and obtain $\widetilde{h}\colon \widetilde{Z}_{1}\cup L\rightarrow X$, as desired.

        Suppose $e_{1} := -K_{X}\cdot h_{*}Z_{1} = -K_{X}\cdot h_{*}Z_{2}$. 
        By assumption, $e_{1}\geqslant 2$ when $H^{3}=4$ and $e_{1}\geqslant 3$ when $H^{3}=5$. 
        Then we see that the evaluation map $s^{(2)}\colon \overline{M}_{0,2}(X,e_{1})\rightarrow X^{2}$ is not birational: indeed, this follows by dimension count except for $H^{3} = 4$ and $e_{1} = 2$. When $H^{3} = 4$ and $e_{1} = 2$, it is known that $s^{(2)}$ is a dominant generically finite morphism of degree $2$ (\cite[Section 3]{Bea95}). Hence one can deform $Z_{1}\cup Z_{2}$ into an elliptic curve $\widetilde{Z}_{1}$ and obtain $\widetilde{h}\colon \widetilde{Z}_{1}\cup L\rightarrow X$, as desired. 

        Finally, a similar reasoning to case (i) shows that the restriction $\widetilde{h}|_{\widetilde{Z_1}}$ is a free genus one curve.
    \end{enumerate}
\end{proof}

\begin{remark}
    Note that Theorem \ref{Thm:MBB} does not hold for $e=5$ and $X$ is a quintic del Pezzo threefold by Proposition \ref{nodeg3or4dP5}. 
    We will explain what will happen when we apply the argument to this case. 
    
    By Proposition \ref{nodeg3or4dP5}, the one parameter family $T$ does not parametrize a union of smooth genus one curve and a rational curve. 
    Hence we find a stable map $f\colon C\rightarrow X$ whose domain is the nodal rational curve. 
    Again by Proposition \ref{nodeg3dP4}, the degenerated stable map for Bend-and-Break fixing two general points will be a banana curve $g\colon Z_{1}\cup Z_{2}\rightarrow X$ with $H\cdot g_{*}Z_{1} = 2$ and $H\cdot g_{*}Z_{2} = 3$. 
    We further break $Z_{2}$ while keeping intersect with $Z_{1}$ into the union $Z'_{1}\cup Z'_{2}$ of a conic and a line. 
    Hence we obtain a stable map $g'\colon Z_{1}\cup Z'_{1}\cup Z'_{2}\rightarrow X$. 
    Now, by construction, $g'(Z_{1})$ and $g'(Z'_{1})$ are conics sharing two general points of $X$. 
    However, there exists a unique conic passing through two general points of $X$ by \cite[Corollary 2.43]{Sanna2014}, i.e. $g'(Z_{1}) = g'(Z'_{1})$. 
    Then smoothing $g'|_{Z_{1}\cup Z'_{1}}$ into a double cover of the conic $g'(Z_{1})$, we finally obtain the union $h\colon Z''_{1}\cup Z'_{2}\rightarrow X$ of the double cover and the line. 
\end{remark}

\subsection{Proof of Main Theorems}\label{subsec: MainThm}

\begin{proof}[Proof of Theorem \ref{irredKontsevich}]
    When the degree $e = 4$, Theorem \ref{hilb scheme of conics} and the same argument of \cite[Theorem 4.7]{Fen26} shows that there exists an irreducible component in $\widetilde{M}_{1,0}(X,e)$ parametrizing double covers of conics. Moreover, this component is unique.
    
    When the degree $e\geqslant 5$, Theorem \ref{coveroflines} and a dimension count imply that the locus of $\widetilde{M}_{1,0}(X,e)$ parametrizing stable maps that are not birational onto their images is exactly the irreducible component $N_{e}$. Hence we consider the locus $\widetilde{M}^{\mathrm{bir}}_{1,0}(X,e)$ parametrizing birational stable maps.
    The base case $e = H^{3}$ is done by Proposition \ref{deg4dP4} and Proposition \ref{deg5dP5}. 
    Suppose $e > H^{3}$. 
    Let $M\subset \widetilde{M}^{\mathrm{bir}}_{1,0}(X,e)$ be an irreducible component. 
    Then there exists a stable map $f\colon Z_{1} \cup Z_{2}\rightarrow X$ as in Theorem \ref{Thm:MBB}. 
    If $f|_{Z_{1}} \notin \widetilde{M}^{\mathrm{bir}}_{1,0}(X,H^{3})$, we may repeatedly apply Theorem \ref{Thm:MBB} to $f|_{Z_{1}}$ to obtain a union $f'\colon Z'_{1} \cup Z'_{2}\rightarrow X$, where $f'|_{Z'}$ is a tree of free rational curves. 
    Then upon smoothing this tree, we obtain a union $g\colon E \cup \mathbb{P}^{1} \rightarrow X$ such that $g|_{E}\in \widetilde{M}^{\mathrm{bir}}_{1,0}(X,H^{3})$ and $g|_{\mathbb{P}^{1}}\in R^{\mathrm{rat}}_{e-H^{3}}$. 

    Now consider the locus $\Delta\subset \widetilde{M}^{\mathrm{bir}}_{1,0}(X,e)$ which is the image of $\widetilde{M}^{\mathrm{bir}}_{1,1}(X,H^{3})\times_{X} (R^{\mathrm{rat}}_{e-H^{3}})'\rightarrow \widetilde{M}^{\mathrm{bir}}_{1,0}(X,e)$, where $(R^{\mathrm{rat}}_{e-H^{3}})'\subset \overline{M}_{0,1}(X, e-H^{3})$ is the irreducible component above $R^{\mathrm{rat}}_{e-H^{3}}\subset \overline{M}_{0,0}(X, e-H^{3})$. 
    By \cite[Lemma 2.1]{Lehmann2021a} and Theorem \ref{thm: LT19 Thm 7.9}, we see that $\Delta$ is irreducible.
    Moreover, since a general member of $\Delta$ is a union of a very free genus one curve and a free rational curve, such a member corresponds to a smooth point of $\widetilde{M}^{\mathrm{bir}}_{1,0}(X,e)$, hence $\Delta$ is contained in a unique component of $\widetilde{M}^{\mathrm{bir}}_{1,0}(X,e)$.
    Since the choice of $M$ is arbitrary, we conclude that $\widetilde{M}^{\mathrm{bir}}_{1,0}(X,e)$ is irreducible. 
\end{proof}

\begin{remark}\label{reducibledomain}
    Note that there exist irreducible components parametrizing stable maps with reducible domains. 
    Let $e, e'\in \mathbb{Z}_{>0}$. Consider the locus of stable maps $(f\colon E\cup \mathbb{P}^{1}\rightarrow X)\in \overline{M}_{1,0}(X,e+e')$ such that $(f|_{E}\colon E\rightarrow X)\in N_{e}$ and $(f|_{\mathbb{P}^{1}}\colon \mathbb{P}^{1}\rightarrow X)\in R^{\mathrm{rat}}_{e'}$. 
    Let $S\subset X$ be the finite subset as in \cite[Theorem 7.6]{Lehmann2019}. 
    Then for each $x\in S$, the locus of the nodal unions of $(f|_{E}\colon E\rightarrow X)\in N_{e}$ and $(f|_{\mathbb{P}^{1}}\colon \mathbb{P}^{1}\rightarrow X)\in R^{\mathrm{rat}}_{e'}$ with $f(E\cap C) = x$ has dimension at most $(1 + 2e) + (2e' - 1) = 2(e + e')$. 
    The locus of the nodal union of $(f|_{E}\colon E\rightarrow X)\in N_{e}$ and $(f|_{\mathbb{P}^{1}}\colon \mathbb{P}^{1}\rightarrow X)\in R^{\mathrm{rat}}_{e'}$ with $f(E\cap C)\notin S$ has dimension $2e + (2e' - 2) + 3 = 2(e + e') + 1$, which is greater than the expected dimension of $\overline{M}_{1,0}(X,e+e')$. 
    Since this locus sweeps out $X$ and its members are not multiple covers of lines, it is an irreducible component not contained in $\widetilde{M}_{1,0}(X,e+e')$.  
\end{remark}

\begin{proof}[Proof of Theorem \ref{irredMor}]
    Let $\Mor^{bir}(E,X,e)$ be the union of irreducible components in $\Mor(E,X)$ parametrizing morphisms of degree $e$ that are birational onto their images. Suppose to the contrary that $\Mor^{bir}(E,X,e)$ is not irreducible for a general $E$.
    
    Let $R'_e \subset \overline{M}_{1,1}(X,e)$ denote the irreducible component corresponding to $R_e$ in Theorem \ref{irredKontsevich}. Consider the morphism 
    \[
    \pi: R'_e \to \overline{M}_{1,1}
    \]
    sending an element $(f: C \to X, p\in C) \in \overline{M}_{1,1}(X,e)$ to $(C, p) \in \overline{M}_{1,1}$. Then the fibre of $\pi$ above $(E,p)$ contains $\Mor^{bir}(E,X,e)$ as an open subscheme. Replace $R'_e$ by a resolution of singularities $\widetilde{R}'_e$ and denote $\widetilde{R}'_e \to \overline{M}_{1,1}$ also by $\pi$. Let the finite part of the Stein factorization of $\pi$ be $\phi: Y \to \overline{M}_{1,1}$. Then the assumption that $\Mor^{bir}(E,X,e)$ is not irreducible implies that $\phi$ is finite of degree greater than $1$. Let $B \subset \overline{M}_{1,1}$ denote the branch locus of $\phi$. Then the fibre of $\pi$ above a point $b\in B$ is generically non-reduced. 

    Let $T$ be the closure of a one-dimensional family of stable maps going through $e-1$ general points and $1$ general member of a basepoint free family constructed in Construction \ref{construction of T}. Then $T$ goes through a non-reduced component $N$ in the fibre above some $b\in B$. Since the support of the branch locus $B$ has length greater than $1$, we may assume that $b$ does not represent the nodal rational curve and hence represents some elliptic curve $E$. 
    
    Let $g: Z\to X$ be a morphism represented by a point in the intersection of $T$ with $N$. Then \cite[Proposition 5.12]{Fen26} implies that there are two possibilities for $g$: either $Z \cong E$, or $Z = E \cup \BP^1$ and $g$ restricts to both components are free. When $Z \cong E$, the component $N$ contains a non-reduced component of $\Mor^{bir}(E, X, e)$, but this contradicts Theorem \ref{Thm:no non-free component}. If $Z$ is reducible, then it suffices to consider the case when $N$ contains some component of $\Mor^{bir}(Z,X,e)$ as an open subset. Since $g|_E$ and $g|_{\BP^1}$ are free curves and it is a codimension one condition for $g|_E$ to intersect the codimension two subscheme $g|_{\BP^1}$ by \cite[3.7 Proposition]{Kollar1996}, the deformations of $g$ in an irreducible component of $\Mor^{bir}(Z,X,e)$ has dimension at most $2e-1$. This contradicts that the dimension of $N$ is at least $2e$. Hence $\Mor^{bir}(E,X,e)$ is irreducible for $E$ general in moduli.  
\end{proof}

\bibliography{math}
\bibliographystyle{alphaurl}

\end{document}